\documentclass{amsart}
\usepackage[top=2.5cm,left=2cm,right=2cm,bottom=3cm]{geometry}
\usepackage[toc,page]{appendix}
\usepackage{amsmath, amssymb, amstext,amsthm}
\usepackage{enumitem}
\usepackage{stackengine} 
\usepackage{bm} 
\usepackage{hyperref}
\usepackage[british]{babel}
\usepackage{verbatim}
\usepackage{esint}
\usepackage[toc]{appendix}
\usepackage{amsfonts}\usepackage{amscd}\usepackage{url}\usepackage{amsopn}
\usepackage{bbm}
\usepackage{cite}\allowdisplaybreaks[1]
\usepackage{stmaryrd}

\makeatletter




\newcounter{thm}
\numberwithin{thm}{section}
\newtheorem{remark}[thm]{Remark}
\newtheorem{definition}[thm]{Definition}
\newtheorem{theorem}[thm]{Theorem}
\newtheorem{lemma}[thm]{Lemma}
\newtheorem{proposition}[thm]{Proposition}

\newcommand{\vt}{\vartheta}
\newcommand{\ve}{\varepsilon}
\newcommand{\vp}{\varphi}

\newcommand{\E}{\mathbb{E}}
\renewcommand{\P}{\mathbb{P}}
\renewcommand{\O}{\Omega}
\renewcommand{\o}{\omega}

\newcommand{\mcA}{\mathcal{A}}
\newcommand{\mcB}{\mathcal{B}}

\newcommand{\mcE}{\mathcal{E}}
\newcommand{\mcF}{\mathcal{F}}

\newcommand{\mcH}{\mathcal H}

\newcommand{\mcL}{\mathcal{L}}

\newcommand{\mcO}{\mathcal{O}}
\newcommand{\mcP}{\mathcal{P}}

\newcommand{\mcS}{\mathcal{S}}

\newcommand{\mcU}{\mathcal{U}}
\newcommand{\mcV}{\mathcal{V}}
\newcommand{\mcW}{\mathcal{W}}
\newcommand{\mcX}{\mathcal{X}}

\newcommand{\mfB}{\mathfrak{B}}

\newcommand{\mfP}{\mathfrak{P}}
\newcommand{\mfW}{\mathfrak{W}}

\newcommand{\mbD}{\mathbb{D}}

\newcommand{\mbH}{\mathbb{H}}
\newcommand{\mbL}{\mathbb{L}}

\newcommand{\mbT}{\mathbb{T}}
\newcommand{\mbW}{\mathbb{W}}
\newcommand{\R}{\mathbb{R}}

\newcommand{\N}{\mathbb{N}}

\newcommand{\D}{\Delta}

\newcommand{\restr}[1]{|_{#1}} 



\newcommand{\diff} {\mathrm{d}}

\DeclareMathOperator*{\supp}{supp}
\DeclareMathOperator*{\spann}{span}
\DeclareMathOperator*{\divv}{div}

\begin{document}

\title{The Stochastic Tamed MHD Equations -- Existence, Uniqueness and Invariant Measures}
\author{Andre Schenke}
\email{aschenke@math.uni-bielefeld.de}
\address{Fakult\"at f\"ur Mathematik, Universit\"at Bielefeld, 33615 Bielefeld, Germany}

\keywords{Tamed MHD equations, magnetohydrodynamics, MHD equations, porous media, invariant measure, strong solutions, Feller semigroup}
\subjclass{76W05, 76S05; 35K91, 76D03, 60H15}
\begin{abstract}
	We study the tamed magnetohydrodynamics equations, introduced recently in a paper by the author, perturbed by multiplicative Wiener noise of transport type on the whole space $\R^{3}$ and on the torus $\mbT^{3}$. In a first step, existence of a unique strong solution are established by constructing a weak solution, proving that pathwise uniqueness holds and using the Yamada-Watanabe theorem. We then study the associated Markov semigroup and prove that it has the Feller property. Finally, existence of an invariant measure of the equation is shown for the case of the torus.
\end{abstract}

\maketitle

\section{Introduction}
In this paper, we consider a randomly perturbed version of the tamed MHD (TMHD) equations introduced recently in \cite{Schenke20b}. This aims at modelling the turbulent behaviour of a flow of electrically conducting fluids through porous media. To be precise, we study existence and uniqueness of strong solutions, as well as existence of invariant measures of the following system of equations:
\begin{equation}\label{STMHD_eq_STMHD}
	\begin{cases}
		\diff \bm{v} &= \left[ \Delta \bm{v} - \left( \bm{v} \cdot \nabla \right) \bm{v}  +  \left( \bm{B} \cdot \nabla \right) \bm{B} + \nabla \left( p + \frac{ | \bm{B} |^2}{2} \right) - g_{N}(| (\bm{v}, \bm{B}) |^{2}) \bm{v} \right] \diff t\\
		&\quad + \sum_{k=1}^{\infty} \left[ (\bm{\sigma}_{k}(t) \cdot \nabla) \bm{v} + \nabla p_{k}(t) + \bm{h}_{k}(t,y(t)) \right] \diff W_{t}^{k} + \bm{f}_{v}(t,y(t)) \diff t, \\		
		\diff \bm{B}
		&= \left[ \Delta \bm{B} - \left( \bm{v} \cdot \nabla \right) \bm{B} + (\bm{B} \cdot \nabla) \bm{v} + \nabla \pi - g_{N}(| (\bm{v}, \bm{B}) |^{2}) \bm{B} \right] \diff t \\
		&\quad + \sum_{k=1}^{\infty} \left[ (\bar{\bm{\sigma}}_{k}(t) \cdot \nabla) \bm{B} + \nabla \pi_{k}(t) + \bar{\bm{h}}_{k}(t,y(t)) \right] \diff \bar{W}_{t}^{k} + \bm{f}_{B}(t,y(t)) \diff t, \\
		\divv (\bm{v}) &= 0, \quad \divv (\bm{B}) = 0.
	\end{cases}
\end{equation}
Here, $\bm{v} = \bm{v}(t,x)$ denotes the velocity field of the fluid, $\bm{B} = \bm{B}(t,x)$ is the magnetic field, $p = p(t,x)$ is the pressure, $\pi = \pi(t,x)$ is the ``magnetic pressure" as introduced in \cite[Section 1.2.3]{Schenke20b}, $g_{N}$ denotes the taming function, which can be understood as an additional drag or friction term due to the porous medium. The taming function $g_{N} \colon \mathbb{R}_{+} \rightarrow \mathbb{R}_{+}$ is defined by
\begin{equation}\label{DTMHD_eq_def_g_N}
	\begin{cases}
		g_{N}(r) := 0, \quad & r \in [0,N], \\
		g_{N}(r) := C_{\text{taming}} \left( r - N - \frac{1}{2} \right)	, \quad & r \geq N+1, \\
		0 \leq g_{N}'(r) \leq C_{1}, \quad & r \geq 0, \\
		| g_{N}^{(k)}(r)| \leq C_{k}, \quad & r \geq 0, k \in \mathbb{N}.
	\end{cases}
\end{equation}

The terms $\bm{f}_{v} = \bm{f}_{v}(t,x,y(t))$ and $\bm{f}_{B} = \bm{f}_{B}(t,x,y(t))$ are forces acting on the fluid. The form of the noise term will be discussed below. For simplicity, we have set all the constants appearing in the equations to one. For the assumptions on the forces, see Section \ref{STMHD_ssec_NotAss}.

The stochastic MHD equations were first studied by S.S. Sritharan and P. Sundar in \cite{SS99} who proved existence of martingale solutions in the two- and three-dimensional case. The paper \cite{TWW16} by Z. Tan, D.H. Wang and H.Q. Wang contains results on existence of a global strong solution in 3D for small initial data as well as existence and uniqueness of a local solution. Their paper, however, was later retracted due to allegations of plagiarism.

Questions of ergodicity in two dimensions were studied by V. Barbu, G. Da Prato \cite{BDP07} for Wiener noise, and for $\alpha$-stable noise by T.L. Shen and J.H. Huang \cite{SH17}. K. Yamazaki \cite{Yamazaki19a} proved ergodicity in the case of random forcing only by a few modes. In three dimensions, he also proved ergodicity of a Faedo-Galerkin approximation of the MHD equations for degenerate noise in \cite{Yamazaki19b}.

The asymptotic behaviour of the SMHD equations was studied in the 2D additive white noise case by W.Q. Zhao and Y.R. Li \cite{ZL11a}, and in the 2D fractional case by J.H. Huang and T.L. Shen \cite{HS16}. H.Q. Wang \cite{WangH18} studied the system's exponential behaviour. Furthermore, S.H. Wang and Y.R. Li \cite{WL18} proved long-time robustness of the associated random attractor.

Jump-type and fractional noises have been studied by P. Sundar \cite{Sundar10}. U. Manna and M.T. Mohan \cite{MM13}, as well as E. Motyl \cite{Motyl14} studied the jump noise case, and the latter author provided a nice and very general framework for 3D hydrodynamic-type equations with L\'{e}vy noise on unbounded domains, generalising the 2D framework of I.D. Chueshov and A. Millet \cite{CM10}. Chueshov and Millet proved large deviations principles as well, and in \cite{CM11} also a Wong-Zakai approximation and a support theorem.

The existence of solutions to the non-resistive MHD equations with L\'{e}vy noise was investigated by U. Manna, M.T. Mohan and S.S. Sritharan \cite{MMS17}. 

K. Yamazaki proved existence of global martingale solutions for the nonhomogeneous system \cite{Yamazaki16}.
The case of non-Newtonian electrically conducting fluids and their long-time behaviour was studied in a paper by P.A. Razafimandimby and M. Sango \cite{RS15}.

In modelling the noise, we follow the approach of R. Mikulevicius and B.L. Rozovskii \cite{MR01, MR04, MR05} who proposed a multiplicative noise of transport-type for the Navier-Stokes equations, motivated by the turbulence theory of R.H. Kraichnan \cite{Kraichnan68}, which was further developed by K. Gawedzki and co-authors in \cite{GK96,GV00}. Transport-type noise was studied by several other authors as well, e.g. Z. Brze\'{z}niak, M. Capi\'{n}ski and F. Flandoli \cite{BCF91,BCF92}, as well as in Flandoli and D. Gatarek \cite[Section 3.3, pp. 378 f.]{FG95}. More recently, M. Hofmanov\'{a}, J.-M. Leahy and T. Nilssen \cite{HLN19} studied the problem via rough path methods.

We note that Mikulevicius and Rozovskii consider the case of only H\"older continuous $\sigma$ as being important to Kraichnan's turbulent velocity model, but for simplicity, we restrict ourselves to the case of differentiable $\sigma$ (see Assumption \ref{STMHD_prel_itm_ass_H2} below).

The tamed MHD equations were inspired by several works on the tamed Navier-Stokes equations. Existence and uniqueness as well as ergodicity for the stochastic tamed Navier-Stokes equations were studied by M. R\"ockner and X.C. Zhang in \cite{RZ09a}. The study of Freidlin-Wentzell-type large deviations was carried out by M. R\"ockner, T.S. Zhang and X.C. Zhang in \cite{RZZ10}. The case of existence, uniqueness and small time large deviation principles for the Dirichlet problem in bounded domains can be found in the work of M. R\"ockner and T.S. Zhang \cite{RZ12}. More recently, there has been resparked interest in the subject, with contributions by Z. Dong and R.R. Zhang \cite{DZ18} (existence and uniqueness for multiplicative L\'{e}vy noise), as well as Z. Brze\'{z}niak and G. Dhariwal \cite{BD19} (existence, uniqueness and existence of invariant measures in the full space $\R^{3}$ by different methods).

The deterministic tamed MHD equations were introduced by the author in \cite{Schenke20b}, where existence, uniqueness and regularity were studied.

From a physical point of view, the fact that our model is most appropriate for low to moderate Reynolds numbers, cf. \cite[Section 1.2.1]{Schenke20b}, raises the question of whether or not a stochastic model for turbulence (which is commonly associated with high Reynolds numbers) is appropriate in this setting. It is, nevertheless, an interesting mathematical problem that we want to address in this work. Furthermore, the randomness can also be seen as a model for other features of our system, including uncertainty.

\subsection{Damped Navier-Stokes Equations}
A related model for fluid flow through porous media is provided by the so-called \emph{(nonlinearly) damped Navier-Stokes equations}, sometimes called Brinkman-Forchheimer-extended Darcy Models. They are given by
\begin{align*}
	\frac{\partial \bm{v}}{\partial t} = \Delta \bm{v} - (\bm{v} \cdot \nabla)\bm{v} - \nabla p - \alpha |\bm{v}|^{\beta-1} \bm{v},
\end{align*}
where $\alpha > 0$ and $\beta \geq 1$. The damping term $- \alpha |\bm{v}|^{\beta-1} \bm{v}$ models the resistence to the motion of the flow resulting from physical effects like porous media flow, drag or friction, or other dissipative mechanisms (cf.  \cite{CJ08}). It represents a restoring force, which for $\beta = 1$ assumes the form of classical, linear damping, whereas $\beta > 1$ means a restoring force that grows superlinearly with the velocity (or magnetic field). X.J. Cai and Q.S. Jiu \cite{CJ08} first proved existence and uniqueness of a global strong solution for $\frac{7}{2} \leq \beta \leq 5$. 

This range was lowered down to $\beta \in (3,5]$ by Z.J. Zhang, X.L. Wu and M. Lu in \cite{ZWL11} who considered the case $\beta = 3$ to be critical \cite[Remark 3.1]{ZWL11}. Y. Zhou in \cite{Zhou12} proved the existence of a global solution for all $\beta \in [3,5]$. For the case $\beta \in [1,3)$, he established regularity criteria that ensure smoothness. Uniqueness holds for any $\beta \geq 1$ in the class of weak solutions.

The first problems studied in the stochastic damped Navier-Stokes case were related to the inviscid limit of the damped equations for $\beta = 1$ in 2D, cf. H. Bessaih and B. Ferrario \cite{BF14} and N. Glatt-Holtz, V. \v{S}ver\'{a}k and V. Vicol \cite{GHSV15}. B. You \cite{You17} proved existence of a random attractor under the assumption of well-posedness for additive noise for $\beta \in (3,5]$ (notably leaving out the critical, or tamed, case $\beta = 3$). K. Yamazaki \cite{Yamazaki18} proved a Lagrangian formulation and extended Kelvin's circulation theorem to the partially damped case (i.e. only a few components are damped, but the damping there is much stronger, e.g. $\beta_{k} = 9$ in two components $k=3,4$ in the 4D case). Z. Brze\'{z}niak and B. Ferrario \cite{BF19} showed existence of stationary solutions on the whole space $\R^{3}$ for $\beta = 1$. H. Liu and H.J. Gao \cite{LG18} proved existence and uniqueness of an invariant measure and a random attractor for $\beta \in [3,5]$. The same authors in \cite{GL19b}  proved a small-time large deviation principle for the same parameter range. Furthermore, again for the same range, for multiplicative noise, exponential convergence of the weak solutions in $L^{2}$ to the stationary solution as well as stabilisation were proved by H. Liu, L. Lin, C.F. Sun and Q.K. Xiao \cite{LLSX19}. Finally, the case of jump noise was treated by H. Liu and H.J. Gao \cite{GL19a}.

\subsection{Results and Structure of The Paper}\label{STMHD_ssec_results}
Here are the main results of the paper.  In the following, the underlying domain $\mbD \subset \R^{3}$ is assumed to be either $\R^{3}$ or the torus $\mbT^{3}$. Results apply to both cases unless otherwise stated.

First, we prove existence and uniqueness of a strong solution to the tamed MHD equations.

\begin{theorem}[Existence and uniqueness, Theorems \ref{STMHD_thm_uniq}, \ref{STMHD_thm_ex} and \ref{STMHD_thm_apriori} below]\label{STMHD_thm_exuniq}
	Let the coefficients $f := (\bm{f}_{v}, \bm{f}_{B})$, $\Sigma := (\bm{\sigma}, \bar{\bm{\sigma}})$ and $H := (\bm{h},\bar{\bm{h}})$ satisfy Assumptions \ref{STMHD_prel_itm_ass_H1}--\ref{STMHD_prel_itm_ass_H3} and let $y_{0} \in \mcH^{1}$. Then there exists a unique strong solution $y$ to the stochastic tamed MHD equations, in the sense of Definition \ref{STMHD_def_USS}, with the following properties:
	\begin{enumerate}[label=(\roman*), ref=(\roman*)]
		\item $y \in L^{2}(\O,P; C([0,T];\mcH^{1})) \cap L^{2}(\O,P; L^{2}([0,T];\mcH^{2}))$ for all $T > 0$ and
		\begin{equation}\label{STMHD_ex_eq_aprioriNbound}
			\E \left[ \sup_{t \in [0,T]} \| y(t) \|_{\mcH^{1}}^{2} \right] + \int\limits_{0}^{t} \E \left[ \| y(s) \|_{\mcH^{2}}^{2} \right] \diff s \leq C_{T,f,H} \left( 1 + \| y_{0} \|_{\mcH^{1}}^{2} \right) \cdot N.
		\end{equation}
		\item In $\mcH^{0}$, the following equation holds:
		\begin{equation}
			y(t) = \int\limits_{0}^{t} [ \mcA(y(s)) + \mcP f(s,y(s))] \diff s + \sum_{k=1}^{\infty} \int\limits_{0}^{t} \mcB_{k}(s,y(s)) \diff W_{s}^{k} \quad \forall t \geq 0, P-\mathrm{a.s.}
		\end{equation}
	\end{enumerate}
\end{theorem}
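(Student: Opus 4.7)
\medskip
\noindent\textbf{Proof proposal.} My plan follows the classical Yamada--Watanabe programme, together with two applications of It\^o's formula: one to establish pathwise uniqueness, and one to establish the a priori bound \eqref{STMHD_ex_eq_aprioriNbound}. The taming term $-g_{N}(|(\bm v,\bm B)|^{2})(\bm v,\bm B)$ plays a decisive role, because it produces a dissipative $L^{4}$-type contribution that compensates for the bad (supercritical in 3D) trilinear terms on the right-hand side.

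First, I would construct a martingale (weak) solution by Galerkin approximation. Pick an orthonormal basis of $\mcH^{0}$ consisting of divergence-free, sufficiently smooth vector fields (e.g.\ eigenfunctions of the Stokes operator), project \eqref{STMHD_eq_STMHD} onto the $n$-dimensional span $\mcH^{0}_{n}$, and apply $\mcP$ everywhere to eliminate the pressures $p, p_{k}, \pi, \pi_{k}$. This yields a finite-dimensional It\^o SDE with locally Lipschitz and linearly growing coefficients (the latter because of the taming cutoff $g_{N}$), so there exists a unique global strong solution $y_{n}$. Applying It\^o's formula to $\|y_{n}\|_{\mcH^{0}}^{2}$ and $\|y_{n}\|_{\mcH^{1}}^{2}$, using skew-symmetry of the MHD trilinear forms $\langle (\bm v \cdot \nabla)\bm v, \bm v\rangle$, $\langle (\bm v\cdot\nabla)\bm B,\bm B\rangle$, and the cross-cancellation $\langle(\bm B\cdot\nabla)\bm B,\bm v\rangle+\langle(\bm B\cdot\nabla)\bm v,\bm B\rangle=0$, together with the It\^o-to-Stratonovich correction from the transport noise (which combines with $\|\nabla y_{n}\|_{\mcH^{0}}^{2}$ under Assumption \ref{STMHD_prel_itm_ass_H2}), produces uniform-in-$n$ bounds of the form
\begin{equation*}
\E\Bigl[\sup_{t\le T}\|y_{n}(t)\|_{\mcH^{1}}^{2}\Bigr] + \int_{0}^{T}\E\bigl[\|y_{n}(s)\|_{\mcH^{2}}^{2}\bigr]\,\diff s \le C_{T,f,H}\bigl(1+\|y_{0}\|_{\mcH^{1}}^{2}\bigr)\cdot N,
\end{equation*}
the factor $N$ entering through $\|g_{N}(|y|^{2})y\|_{\mcH^{0}}^{2}$ (whose pointwise bound on the support of $g_{N}$ is $\sim N \|\nabla y\|^{2}$ after integration by parts in the taming identity used in \cite{Schenke20b}).

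Next, I would upgrade these moment bounds to tightness of the laws of $y_{n}$ on $C([0,T];\mcH^{0}_{w})\cap L^{2}([0,T];\mcH^{1})\cap L^{2}_{w}([0,T];\mcH^{2})$. Compactness in the strong $L^{2}_{t}\mcH^{1}_{x}$ topology requires a fractional-in-time estimate on $y_{n}$; I would obtain it by splitting the equation into the drift and stochastic parts, bounding the drift increments in $\mcH^{-1}$ via the estimate on the nonlinearity, and using the Burkholder--Davis--Gundy inequality for the martingale part (with the $H$-summability assumption giving a uniform bound on the quadratic variation). Prokhorov plus a Skorokhod-type representation (Jakubowski's version on the non-metrisable weak space) then produces, on a new probability space, a process $\tilde y$ and driving noises $(\tilde W^{k},\tilde{\bar W}^{k})$ such that $\tilde y_{n}\to\tilde y$ in the required topology; passing to the limit in the finite-dimensional equation, using the taming cutoff to control the nonlinear term in the limit, yields a weak (martingale) solution.

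Pathwise uniqueness is obtained by taking two solutions $y^{(1)},y^{(2)}$ with the same initial datum, setting $z:=y^{(1)}-y^{(2)}$, and applying It\^o's formula to $\|z\|_{\mcH^{0}}^{2}$. The MHD nonlinearity contributes terms of the form $\langle \mcN(y^{(1)})-\mcN(y^{(2)}),z\rangle$ that are estimated by $\|z\|_{\mcH^{0}}^{2}\cdot(1+\|\nabla y^{(i)}\|_{L^{\infty}}^{2})$ via H\"older, with the $L^{\infty}$-norm replaced by an $\mcH^{2}$-norm through Sobolev embedding and an interpolation; crucially, the difference of the taming terms generates a non-positive contribution of the form $-\int g_{N}(|y^{(1)}|^{2})|z|^{2}\diff x$ plus lower-order error controlled by $C_{1}\|z\|_{\mcH^{0}}^{2}$ (using $g_{N}'\le C_{1}$). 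Together with the monotonicity of the transport noise after the It\^o-Stratonovich correction, Gronwall's lemma applied along the stopping times $\tau_{R}:=\inf\{t:\|y^{(i)}(t)\|_{\mcH^{1}}>R\}$ gives $z\equiv 0$, and the a priori bound of item (i) lets $R\to\infty$. The Yamada--Watanabe theorem then converts weak existence plus pathwise uniqueness into the existence of a unique strong solution, and item (ii) of the statement is simply a restatement of the integral equation satisfied by that strong solution.

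The main obstacle I expect is the tightness/compactness step in three dimensions, together with the clean identification of the limit of the taming term $g_{N}(|y_{n}|^{2})y_{n}$: one needs strong convergence of $y_{n}$ in a space fine enough to pass to the limit in this cubic nonlinearity, which is exactly where the uniform $L^{2}_{t}\mcH^{2}_{x}$-bound (bought at the price of the factor $N$) is essential.
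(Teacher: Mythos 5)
Your overall architecture coincides with the paper's: Faedo--Galerkin approximation, It\^o estimates for $\|y_{n}\|_{\mcH^{0}}^{2}$ and $\|y_{n}\|_{\mcH^{1}}^{2}$ giving the uniform bound with the factor $N$ (which indeed enters through the taming term tested in $\mcH^{1}$, cf.\ the $(2N+1)\|\nabla y\|_{\mcH^{0}}^{2}$ term in \eqref{STMHD_prel_eq_A_H1_testing}), tightness, a Skorokhod representation, identification of the limit, pathwise uniqueness, and Yamada--Watanabe. The main methodological divergence is in the compactness step: you propose tightness on $C([0,T];\mcH^{0}_{w})\cap L^{2}_{t}\mcH^{1}_{x}\cap L^{2}_{w,t}\mcH^{2}_{x}$ with Jakubowski's theorem and a direct passage to the limit in the equation, whereas the paper works on the Polish space $\mcX=C(\R_{+};\mcH^{0}_{\mathrm{loc}})$ with a tailor-made Arzel\`a--Ascoli criterion (Lemma \ref{STMHD_thm_tightness_lemma}), the classical Skorokhod theorem, and identifies the limit through the martingale problem of Proposition \ref{STMHD_thm_MP}. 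Both routes are viable; the paper's choice avoids non-metrisable topologies at the cost of routing everything through test functions $e\in\mcE$ and the generalised pairing \eqref{STMHD_prel_eq_gen_H1_SP}.

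Two of your estimates in the uniqueness step would fail as literally stated. First, you bound the difference of the convective terms by $\|z\|_{\mcH^{0}}^{2}\,(1+\|\nabla y^{(i)}\|_{L^{\infty}}^{2})$ and propose to control $\|\nabla y^{(i)}\|_{L^{\infty}}$ by the $\mcH^{2}$-norm: in three dimensions $\nabla y\in H^{1}\not\hookrightarrow L^{\infty}$, and in any case the solution is only in $L^{2}_{t}\mcH^{2}_{x}$, so this quantity is neither bounded along the stopping time $\tau_{R}$ (which caps only the $\mcH^{1}$-norm) nor compatible with taking expectations before Gronwall. Second, the cross term in the taming difference, $\bigl(g_{N}(|y^{(1)}|^{2})-g_{N}(|y^{(2)}|^{2})\bigr)\langle y^{(2)},z\rangle$, is of size $\int |z|^{2}(|y^{(1)}|+|y^{(2)}|)\,|y^{(2)}|\,\diff x$ and is not controlled by $C_{1}\|z\|_{\mcH^{0}}^{2}$ alone. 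Both gaps are closed by the same device the paper uses throughout its proof of Theorem \ref{STMHD_thm_uniq}: estimate in $\mcL^{4}$, interpolate $\|z\|_{\mcL^{4}}^{2}\le C\|z\|_{\mcH^{1}}^{3/2}\|z\|_{\mcH^{0}}^{1/2}$, use $\|y^{(i)}\|_{\mcH^{1}}\le R$ up to $\tau_{R}$, and split by Young's inequality into a small multiple of $\|\nabla z\|_{\mcH^{0}}^{2}$ (absorbed by the dissipation) plus $C_{R}\|z\|_{\mcH^{0}}^{2}$ (handled by Gronwall). With that repair your argument matches the paper's.
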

By the preceding theorem, for time-homogeneous coefficients, the solution process $y = \left( y(t;y_{0}) \right)_{t \geq 0}$ is a strong Markov process. For $t \geq 0$, we can define the associated Markov semigroup on the space $BC_{\text{loc}}(\mcH^{1})$ of bounded, locally uniformly continuous functions on the divergence-free Sobolev space $\mcH^{1}$, by
\begin{align*}
	T_{t} \phi(y_{0}) := \E \left[ \phi(y(t;y_{0})) \right], \quad \phi \in BC_{\text{loc}}(\mcH^{1}), y_{0} \in \mcH^{1}.
\end{align*}
Then, under a slightly stronger assumption on the coefficients of the noise, this semigroup satisfies the Feller property.
\begin{theorem}[Feller property, Theorem \ref{STMHD_thm_Feller} below]
	Under the Assumptions \ref{STMHD_prel_itm_ass_H1}, \ref{STMHD_prel_itm_ass_H2} and \ref{STMHD_FP_itm_ass_H3}, for every $t \geq 0$, $T_{t}$ maps $BC_{\text{loc}}(\mcH^{1})$ into itself, i.e. it is a Feller semigroup on $BC_{\text{loc}}(\mcH^{1})$.
\end{theorem}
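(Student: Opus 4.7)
The boundedness $|T_t\phi(y_0)| \leq \|\phi\|_\infty$ is immediate. The remaining task is to show, for every $R > 0$, that $T_t\phi$ is uniformly continuous on the ball $B_R := \{y_0 \in \mcH^1 : \|y_0\|_{\mcH^1} \leq R\}$. My plan is to reduce this to a quantitative continuous-dependence estimate for the stochastic flow: provided $\|y(t;y_0) - y(t;\tilde y_0)\|_{\mcH^1} \to 0$ in probability whenever $\|y_0 - \tilde y_0\|_{\mcH^1} \to 0$ uniformly in $y_0, \tilde y_0 \in B_R$, then combining with the local uniform continuity of $\phi$ on a larger ball $B_M \subset \mcH^1$ and using Markov's inequality together with \eqref{STMHD_ex_eq_aprioriNbound} to bound $\P(\|y(t;y_0)\|_{\mcH^1} > M)$ yields the Feller property by a standard $\eps/\delta$ argument.

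The core technical step is an It\^o computation for $\|z(t)\|_{\mcH^1}^2$, where $z := y - \tilde y$ is the difference of two strong solutions with initial data $y_0, \tilde y_0 \in B_R$. Subtracting the equations and pairing in $\mcH^1$, the Laplacian contributes a dissipation $-2\int_0^t \|z(s)\|_{\mcH^2}^2 \, ds$. The MHD bilinear differences such as $(\bm{v}\cdot\nabla)\bm{v} - (\tilde{\bm{v}}\cdot\nabla)\tilde{\bm{v}} = (\bm{z}\cdot\nabla)\bm{v} + (\tilde{\bm{v}}\cdot\nabla)\bm{z}$ and their magnetic analogues are bounded by Sobolev embedding, interpolation and Young's inequality, absorbing the highest derivatives into the dissipation. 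The taming difference $g_N(|y|^2)y - g_N(|\tilde y|^2)\tilde y$ splits as $(g_N(|y|^2) - g_N(|\tilde y|^2))y + g_N(|\tilde y|^2)z$; both pieces are controlled linearly in $z$ thanks to the uniform bounds on $g_N'$ and $g_N''$ from \eqref{DTMHD_eq_def_g_N}. The transport noise $(\bm{\sigma}_k\cdot\nabla)z$ combines with its It\^o correction to produce the same commutator structure handled in the a priori estimate, and the additive noise difference $\bm{h}_k(s,y) - \bm{h}_k(s,\tilde y)$ is Lipschitz in $z$ by the strengthened Assumption \ref{STMHD_FP_itm_ass_H3}.

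Burkholder-Davis-Gundy followed by Gronwall's inequality should then deliver an estimate of the form
\begin{equation*}
	\E \sup_{s \in [0,T]} \|z(s)\|_{\mcH^1}^2 + \E \int_0^T \|z(s)\|_{\mcH^2}^2 \, ds \leq C(T,R,N)\, \|y_0 - \tilde y_0\|_{\mcH^1}^2,
\end{equation*}
where the constant depends on $R$ only through the a priori bound \eqref{STMHD_ex_eq_aprioriNbound}.

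The main obstacle is that several inner products in the $\mcH^1$-estimate, notably $\langle (\bm{z}\cdot\nabla)\bm{v}, \bm{z}\rangle_{\mcH^1}$, require a \emph{pathwise} control of $\int_0^T \|y(s)\|_{\mcH^2}^2 \, ds$, which is only available in expectation. I would resolve this by localisation: introduce the stopping time $\tau_M := \inf\{t \geq 0 : \|y(t)\|_{\mcH^1}^2 + \|\tilde y(t)\|_{\mcH^1}^2 + \int_0^t (\|y(s)\|_{\mcH^2}^2 + \|\tilde y(s)\|_{\mcH^2}^2) \, ds > M\}$, close the Gronwall estimate on $[0, t \wedge \tau_M]$ with a deterministic constant depending on $M$, and then send $M \to \infty$ using Markov's inequality and \eqref{STMHD_ex_eq_aprioriNbound}. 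This yields convergence of $z(t)$ to zero in probability in $\mcH^1$, which together with the boundedness of $\phi$ suffices for the Feller property.
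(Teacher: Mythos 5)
Your proposal is correct in outline and follows the same architecture as the paper: the Feller property is reduced to a quantitative continuous-dependence estimate for the difference $z = y(\cdot;y_0) - y(\cdot;y_0')$ in $\mcH^1$, obtained by an It\^o computation, localised by a stopping time, and combined with the a priori bound and the local uniform continuity of $\phi$ (this is exactly Lemma \ref{STMHD_FP_thm_Lemma} followed by the argument of \cite[Theorem 4.2]{RZ09b}). The one substantive difference is at the step you flag as the ``main obstacle'': the paper does \emph{not} need pathwise control of $\int_0^T \| y(s)\|_{\mcH^2}^2\,\diff s$, and its stopping time is simply $\tau_R = \inf\{t : \|y(t;y_0)\|_{\mcH^1} \vee \|y(t;y_0')\|_{\mcH^1} > R\}$. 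The reason is that in terms like $\langle (\bm{z}_v\cdot\nabla)\bm{v} - (\tilde{\bm{v}}\cdot\nabla)\bm{z}_v, \bm{z}_v\rangle_{\mcH^1}$ the paper places the $L^\infty$ and $L^3$ norms on the \emph{difference} $\bm{z}_v$ and interpolates, $\|\bm{z}_v\|_{L^\infty} + \|\nabla \bm{z}_v\|_{L^3} \leq C\|\bm{z}_v\|_{H^2}^{3/4}\|\bm{z}_v\|_{L^2}^{1/4}$, so that after Young's inequality the $H^2$-part of $z$ is absorbed into the dissipation $-2\int_0^{t_R}\|z\|_{\mcH^2}^2\,\diff s$ and the Gronwall coefficient involves only $\|\bm{v}\|_{H^1}^8$ and $\|\tilde{\bm{v}}\|_{H^1}^8$, which are bounded by $R^8$ up to $\tau_R$. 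This yields the clean bound $\E\|z(t\wedge\tau_R)\|_{\mcH^1}^2 \leq C_{t,R,N,f,H,\Sigma}\|y_0-y_0'\|_{\mcH^1}^2$, which is slightly stronger and simpler to use than the convergence in probability your augmented stopping time produces; your version would still suffice for the Feller property since $\phi$ is bounded, so this is a matter of efficiency rather than a gap. Two smaller points: the control of the taming difference uses not only the uniform bounds on $g_N'$ but also $g_N(r)\leq Cr$ together with the $\mcL^6\supset\mcH^1$ embedding and the $R$-bound on $\|y\|_{\mcH^1},\|\tilde y\|_{\mcH^1}$; and the absorption of the second-order part of the It\^o correction $\sum_k\|(\Sigma_k\cdot\nabla)\partial_{x^j}z\|_{\mcL^2}^2$ into the dissipation is not a commutator cancellation but relies crucially on the smallness hypothesis $\sup\|\Sigma\|_{\ell^2}^2\leq \tfrac{1}{36}$ of Assumption \ref{STMHD_prel_itm_ass_H2} via Lemma \ref{STMHD_thm_Besselpot_est}; you should make that dependence explicit.
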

In the case of a periodic domain $\mbD = \mbT^{3}$, we prove the existence of an invariant measure.
\begin{theorem}[Existence of an invariant measure, Theorem \ref{STMHD_thm_inv_ms_ex} below]
	Under the hypotheses \ref{STMHD_prel_itm_ass_H1}, \ref{STMHD_prel_itm_ass_H2}, \ref{STMHD_FP_itm_ass_H3}, in the periodic case $\mbD = \mbT^{3}$, there exists an invariant measure $\mu \in \mcP(\mcH^{1})$ associated to $(T_{t})_{t \geq 0}$, i.e. a measure $\mu$ such that for every $t \geq 0$, $\phi \in BC_{\text{loc}}(\mcH^{1})$
	\begin{align*}
		\int_{\mcH^{1}} T_{t} \phi(y_{0}) \diff \mu(y_{0}) = \int_{\mcH^{1}} \phi(y_{0}) \diff \mu(y_{0}).
	\end{align*}
\end{theorem}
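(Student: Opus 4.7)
The plan is to apply the Krylov-Bogoliubov theorem. Since the Feller property of $(T_t)_{t \geq 0}$ on $BC_{\text{loc}}(\mcH^1)$ is already provided by the preceding theorem, it suffices to exhibit some $y_0 \in \mcH^1$ for which the family of time-averaged measures
\begin{align*}
	\mu_T := \frac{1}{T} \int_{0}^{T} \mcL(y(s;y_0)) \, \diff s \in \mcP(\mcH^1), \qquad T \geq 1,
\end{align*}
is tight on $\mcH^1$. Then any weak limit point of $\{\mu_T\}$ is invariant under $(T_t)_{t \geq 0}$ by a standard argument (Fatou plus the Feller property applied to the defining identity $\int T_t \phi \, \diff \mu_T = \int \phi \, \diff \mu_T + \frac{1}{T} \int_T^{T+t} \E \phi(y(s;y_0)) \diff s - \frac{1}{T}\int_0^t \E \phi(y(s;y_0)) \diff s$).

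Tightness relies crucially on the compact embedding $\mcH^2 \hookrightarrow \mcH^1$, which holds on the bounded domain $\mbT^3$ by Rellich-Kondrachov but fails on $\R^3$ (this is what restricts the statement to the torus). By Chebyshev,
\begin{align*}
	\mu_T \bigl( \{ y \in \mcH^1 : \| y \|_{\mcH^2} > R \} \bigr) \leq \frac{1}{R^2} \cdot \frac{1}{T} \int_{0}^{T} \E \bigl[ \| y(s;y_0) \|_{\mcH^2}^{2} \bigr] \, \diff s,
\end{align*}
so tightness reduces to the uniform-in-$T$ estimate
\begin{align*}
	\sup_{T \geq 1} \frac{1}{T} \int_{0}^{T} \E \bigl[ \| y(s;y_0) \|_{\mcH^2}^{2} \bigr] \, \diff s < \infty.
\end{align*}

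To obtain this bound, I would apply It\^o's formula to $\| y(s) \|_{\mcH^1}^{2}$. The Laplacian term contributes $-2 \| y \|_{\mcH^2}^{2}$; the taming term yields a non-negative dissipation that absorbs the cubic part of the convective and magnetic trilinear forms (reproducing the structure used in the deterministic a priori bound of \cite{Schenke20b} and already exploited in Theorem \ref{STMHD_thm_apriori}); the It\^o correction from the transport noise $(\bm{\sigma}_k \cdot \nabla)\bm{v}$ and $(\bar{\bm{\sigma}}_k \cdot \nabla)\bm{B}$ is dominated by a fraction of $\| y \|_{\mcH^2}^{2}$ via Assumption \ref{STMHD_prel_itm_ass_H2}; and the inhomogeneous forcing and noise terms are bounded using the growth/boundedness parts of Assumptions \ref{STMHD_prel_itm_ass_H1} and \ref{STMHD_FP_itm_ass_H3}. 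Taking expectation and using Poincar\'e's inequality on $\mbT^3$ to control the lower-order $\| y \|_{\mcH^1}^{2}$ by $\| y \|_{\mcH^2}^{2}$, one gets a differential inequality of the form $\tfrac{\diff}{\diff s} \E \| y(s) \|_{\mcH^1}^{2} + \gamma \, \E \| y(s) \|_{\mcH^2}^{2} \leq K$ with $\gamma, K > 0$ independent of $s$; integrating over $[0,T]$ and dividing by $T$ yields the desired uniform bound.

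The main obstacle is precisely extracting a \emph{strictly positive} and \emph{time-independent} coercivity constant $\gamma$ in front of $\| y \|_{\mcH^2}^{2}$ after accounting for the It\^o correction of the transport noise; this hinges on the coerciveness built into Assumption \ref{STMHD_FP_itm_ass_H3} (the slightly stronger version used for the Feller property) and on Poincar\'e's inequality, both of which are compatible only on the torus. Once this is done, tightness of $\{ \mu_T \}_{T \geq 1}$ in $\mcH^1$ follows, and Krylov-Bogoliubov combined with the Feller property produces an invariant measure $\mu \in \mcP(\mcH^1)$.
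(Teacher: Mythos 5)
Your overall strategy is exactly the paper's: Krylov--Bogoliubov applied to time-averaged laws, with tightness in $\mcH^{1}$ obtained from Chebyshev's inequality, the compact embedding $\mcH^{2}\subset\mcH^{1}$ on the torus, and a uniform-in-$T$ bound on $\frac{1}{T}\int_{0}^{T}\E[\|y(s)\|_{\mcH^{2}}^{2}]\,\diff s$ derived from It\^o's formula for $\|y\|_{\mcH^{1}}^{2}$. The gap is in how you extract the coercivity constant $\gamma$. After the taming term has absorbed the convective and magnetic trilinear forms, the $\mcH^{1}$-level estimate \eqref{STMHD_prel_eq_A_H1_testing} leaves the remainder $(2N+1)\|\nabla y\|_{\mcH^{0}}^{2}+\|y\|_{\mcH^{0}}^{2}$ on the right-hand side. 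Your proposal to control this ``lower-order $\|y\|_{\mcH^{1}}^{2}$ by $\|y\|_{\mcH^{2}}^{2}$ via Poincar\'e'' does not close the argument: Poincar\'e (or the trivial embedding) gives $\|\nabla y\|_{\mcH^{0}}^{2}\le\|y\|_{\mcH^{2}}^{2}$ with a constant at least $1$, so $(2N+1)\|\nabla y\|_{\mcH^{0}}^{2}$ cannot be absorbed into the available dissipation $-\tfrac{1}{2}\|y\|_{\mcH^{2}}^{2}$ (less the fraction already consumed by the It\^o correction of the transport noise) once $N\ge 1$; the resulting ``differential inequality'' would carry the wrong sign in front of $\E\|y\|_{\mcH^{2}}^{2}$.

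The paper closes this gap with a preliminary $\mcH^{0}$-level estimate: It\^o's formula for $\|y\|_{\mcH^{0}}^{2}$ together with \eqref{STMHD_prel_eq_A_testing2} and --- crucially --- the embedding $\mcL^{4}(\mbT^{3})\subset\mcL^{2}(\mbT^{3})$ on the bounded domain lets the quartic dissipation produced by the taming term dominate the term $CN\|y\|_{\mcH^{0}}^{2}$, yielding the \emph{linear-in-$t$} bound \eqref{STMHD_FP_eq_aprioriH0}, in particular $\int_{0}^{t}\E[\|y(s)\|_{\mcH^{1}}^{2}]\,\diff s\le C(\|y_{0}\|_{\mcH^{0}}^{2}+t)$. (A Gronwall argument as in Lemma \ref{STMHD_thm_apriori} would only give exponential growth in $t$ and is useless for the time average.) Feeding this into the $\mcH^{1}$-level It\^o formula controls the $(2N+1)\|\nabla y\|_{\mcH^{0}}^{2}$ remainder after integrating in time and dividing by $t$, and gives \eqref{STMHD_eq_invmstightness}. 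So the torus enters twice --- through $\mcL^{4}\subset\mcL^{2}$ and through the compactness of $\mcH^{2}\subset\mcH^{1}$ --- and not only through the compact embedding, as you state. A further small point: the smallness needed to dominate the It\^o correction of the transport noise comes from Assumption \ref{STMHD_prel_itm_ass_H2}, not from a coercivity built into \ref{STMHD_FP_itm_ass_H3}, which only provides Lipschitz and growth bounds on $H$. If you repair the coercivity step as above, the rest of your argument is sound.
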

The goal of this work was to generalise the well-posedness results of M. R\"ockner and X.C. Zhang \cite{RZ09b} for the stochastic tamed Navier-Stokes equations to the stochastic tamed MHD case. In doing so, we have to establish MHD versions of several  tools they use, which is technically more difficult.

To the best of the author's knowledge, neither the stochastic damped MHD equations nor the stochastic tamed MHD equations have not been considered so far. This work is a first step in a similar direction. 

The paper is organised as follows: we state our assumptions in Section \ref{STMHD_ssec_NotAss} as well as auxiliary results and estimates on the coefficients of our equation in Section \ref{STMHD_ssec_estimates_lemmas}. Since our proof of existence involves a tightness argument, we provide a tightness criterion in Section \ref{STMHD_sec_tightness}. 
Existence and uniqueness of a strong solution is then proved in Section \ref{STMHD_sec_exuniq}. We start by defining weak and strong solutions in \ref{STMHD_ssec_weakstrong}.
The next section, \ref{STMHD_ssec_uniq} is then devoted to proving pathwise uniqueness. 
Existence of a weak solution is proved in Section \ref{STMHD_ssec_exweak} via the by now classical strategy of proving \emph{a priori} estimates for the Faedo-Galerkin approximation to the equation and using them to infer tightness of the sequence of laws, Skorokhod coupling to obtain almost sure convergence, and concluding by proving uniform moment estimates and convergence in probability. This allows us to obtain -- using the Yamada-Watanabe theorem -- that there exists a unique strong solution. The Feller property of the semigroup as well as existence of invariant measures are then shown in Section \ref{STMHD_sec_Fellerinv}.

The results of this work were published as part of the author's PhD thesis \cite{Schenke20}. More detailed discussions of the literature as well as calculations can be found there.

\section{Preliminaries}\label{STMHD_sec_preliminaries}
In this section, we provide the basic tools needed later. After establishing our notation and the assumptions the coefficients, we prove some elementary estimates for the operators associated to the coefficients. We then provide a tightness criterion for later use.
\subsection{Notation and Assumptions}\label{STMHD_ssec_NotAss}

For a domain $G \subset \mathbb{R}^{3}$, we use the following notational hierarchy for $L^{p}$ and Sobolev spaces:  
\begin{enumerate}[label=(\arabic*), ref=(\arabic*)]
	\item The spaces $L^{p}(G,\mathbb{R})$ of real-valued integrable (equivalence classes of) functions -- e.g. the components $v^{i}$, $B^{i}$ -- will be denoted as $L^{p}(G)$ or $L^{p}$ if no confusion can arise.
	\item We may sometimes use the notation $\bm{L}^{p}(G) := L^{p}(G;\mathbb{R}^{3})$ to denote 3-D vector-valued integrable quantities, especially the velocity vector field and magnetic vector field $\bm{v}$ and $\bm{B}$.
	\item The divergence-free and $p$-integrable vector fields will be denoted by \verb=\mathbb= symbols, so $\mathbb{L}^{p}(G) := \bm{L}^{p}(G) \cap \divv^{-1} \{ 0 \}$. Its elements $\bm{v}$, $\bm{B}$ satisfy $\divv \bm{v} = \nabla \cdot \bm{v} = 0$, $\divv \bm{B} = 0$.
	\item Finally, we denote the space of the combined velocity and magnetic vector fields by \verb=\mathcal= symbols, i.e., $\mathcal{L}^{p}(G) := \mathbb{L}^{p}(G) \times \mathbb{L}^{p}(G)$. Its elements are of the form $y = (	\bm{v} , \bm{B})$.
\end{enumerate}
For Sobolev spaces, we use the same notational conventions, so for example $\mathbb{H}^{k}(G) := \bm{H}^{k}(G) \cap \divv^{-1} \{ 0 \} := W^{k,2}(G ; \mathbb{R}^{3}) \cap \divv^{-1} \{ 0 \} $ etc. Finally, if the domain of the functions is not in $\mathbb{R}^{3}$, in particular if it is a real interval (for the time variable), then we use the unchanged $L^{p}$ notation.
By $\ell^{2}$, we denote the space of square-summable sequences.

For brevity, we use the following terminology when discussing the terms on the right-hand side of the tamed MHD equations: the terms involving the Laplace operator are called the \emph{linear terms}, the terms involving the taming function $g_{N}$ are called \emph{taming terms} and the other terms are called the \emph{nonlinear terms}. Furthermore, we refer to the initial data $y_{0} = (\bm{v}_{0} , \bm{B}_{0} )$ and the force $f = ( \bm{f}_{v} , \bm{f}_{B})$ collectively as the \emph{data} of the problem.

For definiteness, we want to state the following very elementary relationship between the classical homogeneous second-order Sobolev norm and the norm we use in this work (which is defined via Bessel potentials).
\begin{lemma}\label{STMHD_thm_Besselpot_est}
	Let $\mbD \in \{ \R^{3}, \mbT^{3} \}$ and $y \in \mcH^{2}(\mbD)$. Then the following estimate holds:
	\begin{equation}\label{STMHD_prel_eq_Besselpot_est}
			\| y \|_{\dot{\mcW}^{2,2}(\mbD)}^{2} := \sum_{i,j=1}^{d} \| \partial_{x^{i}}\partial_{x^{j}} y \|_{\mcL^{2}(\mbD)}^{2} \leq 3^{2} \| y \|_{\mcH^{2}(\mbD)}^{2}.
	\end{equation}
\end{lemma}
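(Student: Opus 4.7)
The plan is to reduce the estimate to a pointwise inequality on the Fourier side. Both candidate domains admit a clean Plancherel-type identity: for $\mbD = \R^{3}$ we use the Fourier transform, and for $\mbD = \mbT^{3}$ we use Fourier series on the lattice $\Z^{3}$. In either case, the Bessel-potential norm that defines $\mcH^{2}$ is
\begin{align*}
	\| y \|_{\mcH^{2}(\mbD)}^{2} = \int (1 + |\xi|^{2})^{2} |\widehat{y}(\xi)|^{2} \diff \xi
\end{align*}
(with the integral replaced by summation in the periodic case), and each mixed second derivative transforms as $\widehat{\partial_{x^{i}} \partial_{x^{j}} y}(\xi) = -\xi^{i} \xi^{j}\, \widehat{y}(\xi)$.

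Given this, I would first write, componentwise and for each pair $(i,j)$,
\begin{align*}
	\| \partial_{x^{i}} \partial_{x^{j}} y \|_{\mcL^{2}}^{2} = \int |\xi^{i} \xi^{j}|^{2}\, |\widehat{y}(\xi)|^{2} \diff \xi,
\end{align*}
and note the elementary bound $|\xi^{i} \xi^{j}| \leq |\xi|^{2} \leq 1 + |\xi|^{2}$, so that $|\xi^{i}\xi^{j}|^{2} \leq (1+|\xi|^{2})^{2}$ uniformly in $\xi$. This yields $\| \partial_{x^{i}} \partial_{x^{j}} y \|_{\mcL^{2}}^{2} \leq \| y \|_{\mcH^{2}}^{2}$ for each of the $9 = 3^{2}$ pairs $(i,j)$ with $i,j \in \{1,2,3\}$, and summing produces the claimed factor of $3^{2}$.

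Since the argument works for each scalar component of the combined vector $y = (\bm{v}, \bm{B})$, the inequality transfers verbatim from $H^{2}$ to $\mcH^{2}$ after summing over components. There is really no serious obstacle here; the only mild bookkeeping point is to handle the two underlying domains uniformly by phrasing Plancherel generically as $\|f\|_{L^{2}(\mbD)}^{2} = \|\widehat{f}\|_{L^{2}(\widehat{\mbD})}^{2}$, with $\widehat{\mbD} = \R^{3}$ or $\Z^{3}$ accordingly. A sharper analysis using $\sum_{i,j} |\xi^{i}\xi^{j}|^{2} = |\xi|^{4} \leq (1+|\xi|^{2})^{2}$ would in fact give constant $1$, but the loose constant $3^{2}$ suffices for all later applications and results from the crudest termwise bound, which is the route I would take to keep the proof short.
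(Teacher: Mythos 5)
Your proposal is correct and follows essentially the same route as the paper: pass to the Fourier side via Plancherel and bound the symbol $\xi_{i}\xi_{j}$ against $1+|\xi|^{2}$. The only difference is cosmetic — the paper splits into low and high frequencies and applies Young's inequality $|\xi_{i}\xi_{j}| \leq \tfrac{1}{2}(\xi_{i}^{2}+\xi_{j}^{2})$ before reassembling $3^{2}\|(I-\Delta)y\|_{\mcL^{2}}^{2}$, whereas your uniform pointwise bound $|\xi_{i}\xi_{j}|^{2} \leq (1+|\xi|^{2})^{2}$ reaches the same constant more directly.
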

\begin{proof}
Using Plancherel's theorem and Young's inequality, we find
\begin{align*}
	&\sum_{i,j=1}^{3} \| \partial_{x^{i}}\partial_{x^{j}} y \|_{\mcL^{2}}^{2} = \sum_{i,j=1}^{3} \| \xi_{i} \xi_{j} \hat{y} \|_{\mcL^{2}}^{2} = \sum_{i,j=1}^{3} \left( \| \xi_{i} \xi_{j} \hat{y} \|_{\mcL^{2}(\{ |\xi| \leq 1 \})}^{2} + \| \xi_{i} \xi_{j} \hat{y} \|_{\mcL^{2}(\{ |\xi| > 1 \})}^{2} \right) \\
	&\leq \sum_{i,j=1}^{3} \left( \| \hat{y} \|_{\mcL^{2}}^{2} + \left\| \frac{\left(\xi_{i}^{2} + \xi_{j}^{2} \right)}{2} \hat{y} \right\|_{\mcL^{2}(\{ |\xi| > 1 \})}^{2} \right) \leq 3^{2} \| y \|_{\mcL^{2}}^{2} + 3 \left\| | \xi |^{2} \hat{y} \right\|_{\mcL^{2}(\{ |\xi| > 1 \})}^{2}  \\
	&\leq 3^{2} \left( \| y \|_{\mcL^{2}}^{2} +  \| \Delta y \|_{\mcL^{2}}^{2}  \right) \leq 3^{2} \left( \| y \|_{\mcL^{2}}^{2} + 2\| \nabla y \|_{\mcL^{2}}^{2} +  \| \Delta y \|_{\mcL^{2}}^{2} \right) = 3^{2} \| (I-\Delta) y \|_{\mcL^{2}}^{2} = 3^{2} \| y \|_{\mcH^{2}}^{2}.
\end{align*}
\end{proof}

We will use the following estimate below.
\begin{lemma}
	For a function $y = (\bm{v}, \bm{B})$ such that $|\bm{v}| | \nabla \bm{v}| \in L^{2}$, $|\bm{B}| | \nabla \bm{B}| \in L^{2}$, it holds that $y \in \mcL^{12}$ and
	\begin{equation}\label{STMHD_eq_GNS_aux_est}
		\| y \|_{\mcL^{12}}^{4} \leq C \left( \left\| |\bm{v}| |\nabla \bm{v}| \right\|_{L^{2}}^{2} + \left\| |\bm{B}| |\nabla \bm{B}| \right\|_{L^{2}}^{2} \right).
	\end{equation}
\end{lemma}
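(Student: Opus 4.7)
The plan is to reduce the vector-field estimate to a scalar Sobolev estimate applied to $|\bm{v}|^{2}$ and $|\bm{B}|^{2}$. First I would split the left-hand side: since $\|y\|_{\mcL^{12}}^{12} = \|\bm{v}\|_{\mbL^{12}}^{12} + \|\bm{B}\|_{\mbL^{12}}^{12}$, the subadditivity $(a+b)^{1/3} \leq a^{1/3} + b^{1/3}$ gives
\begin{equation*}
	\| y \|_{\mcL^{12}}^{4} \leq \|\bm{v}\|_{\mbL^{12}}^{4} + \|\bm{B}\|_{\mbL^{12}}^{4},
\end{equation*}
so it suffices to bound each term separately by $C \||\bm{v}||\nabla\bm{v}|\|_{L^{2}}^{2}$ and $C \||\bm{B}||\nabla\bm{B}|\|_{L^{2}}^{2}$, respectively.

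The key observation is the identity $\|\bm{v}\|_{\mbL^{12}}^{4} = \| |\bm{v}|^{2} \|_{L^{6}}^{2}$, which converts an $L^{12}$ bound on a vector field into an $L^{6}$ bound on a nonnegative scalar function. Next I would invoke the Sobolev embedding $\dot H^{1}(\mbD) \hookrightarrow L^{6}(\mbD)$ (Gagliardo--Nirenberg--Sobolev in three dimensions) applied to $f := |\bm{v}|^{2}$, obtaining $\||\bm{v}|^{2}\|_{L^{6}} \leq C \|\nabla |\bm{v}|^{2}\|_{L^{2}}$. The chain rule (or Kato-type inequality) then gives the pointwise bound $|\nabla |\bm{v}|^{2}| \leq 2 |\bm{v}||\nabla \bm{v}|$, so
\begin{equation*}
	\|\bm{v}\|_{\mbL^{12}}^{4} = \||\bm{v}|^{2}\|_{L^{6}}^{2} \leq C \|\nabla |\bm{v}|^{2}\|_{L^{2}}^{2} \leq 4 C \, \big\| |\bm{v}||\nabla \bm{v}| \big\|_{L^{2}}^{2}.
\end{equation*}
Repeating the same argument with $\bm{B}$ in place of $\bm{v}$ and summing yields the desired inequality.

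The only mildly delicate point is making sure $|\bm{v}|^{2}$ actually lies in the space where the chosen form of the Sobolev embedding applies: the hypothesis $|\bm{v}||\nabla\bm{v}| \in L^{2}$ together with the pointwise chain rule gives $\nabla |\bm{v}|^{2} \in L^{2}$, which is exactly the homogeneous regularity needed for the $\dot H^{1} \hookrightarrow L^{6}$ embedding on $\R^{3}$; on $\mbT^{3}$ one may either note that $|\bm{v}|^{2}$ automatically lies in $L^{2}$ because the estimate is only ever applied to elements $y \in \mcH^{1}$ already controlled in $\mcL^{4}$, or simply invoke the full Sobolev embedding $H^{1}(\mbT^{3}) \hookrightarrow L^{6}(\mbT^{3})$ and absorb the lower-order term. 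I expect this technicality about the homogeneous versus inhomogeneous embedding to be the main (minor) obstacle; the rest is a direct chain-rule computation.
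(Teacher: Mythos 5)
Your proposal is correct and follows essentially the same route as the paper: both rest on the identity $\|\cdot\|_{L^{12}}^{4}=\||\cdot|^{2}\|_{L^{6}}^{2}$, the Gagliardo--Nirenberg--Sobolev embedding $\|f\|_{L^{6}}\leq C\|\nabla f\|_{L^{2}}$ on $\R^{3}$, and the chain-rule bound on $\nabla(|\cdot|^{2})$; the paper merely applies the embedding once to $|y|^{2}=|\bm{v}|^{2}+|\bm{B}|^{2}$ and absorbs the cross terms afterwards, rather than splitting $\bm{v}$ and $\bm{B}$ first via subadditivity. Your side remark that the homogeneous embedding needs care on $\mbT^{3}$ is apt, since the paper's proof as written is also the whole-space argument.
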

\begin{proof}
By the Gagliardo-Nirenberg-Sobolev inequality for scalar functions $f \in W^{1}(\R^{3})$
\begin{align*}
	\| f \|_{L^{6}} \leq C \| \nabla f \|_{L^{2}},
\end{align*}
it follows that
\begin{align*}
	\| y \|_{\mcL^{12}}^{4} = \| |y|^{2} \|_{\mcL^{6}}^{2} &\leq C \| \nabla |y|^{2} \|_{L^{2}}^{2} = 4C \int \sum_{i=1}^{3} \left| \sum_{j=1}^{3}  \left( v_{j} \partial_{i} v_{j} +  B_{j} \partial_{i} B_{j} \right) \right|^{2}  \diff x \\
	&\leq 24C \int \sum_{j=1}^{3}  \left( |v_{j}|^{2} \sum_{i=1}^{3} | \partial_{i} v_{j}|^{2} +  |B_{j}| \sum_{i=1}^{3} |\partial_{i} B_{j}|^{2} \right)   \diff x \\
	&\leq 24C \left( \left\| |\bm{v}| |\nabla \bm{v}| \right\|_{L^{2}}^{2} + \left\| |\bm{B}| |\nabla \bm{B}| \right\|_{L^{2}}^{2} \right),
\end{align*}
which proves the claim.
\end{proof} 

We make the following assumptions on our coefficients:
\begin{enumerate}[label=(H\arabic*), ref=(H\arabic*)]
	\item\label{STMHD_prel_itm_ass_H1} For any $T > 0$, the function $f = \begin{pmatrix} \bm{f}_{v}, \bm{f}_{B} \end{pmatrix}$ with
	 $\bm{f}_{v}, \bm{f}_{B} \colon [0,T] \times \mbD \times \R^{6} \rightarrow \R^{3} $ satisfies: there is a constant $C_{T,f} > 0$ and a function $F_{f}(t,x) \in L^{\infty}([0,T];L^{1}(\mbD))$
	\begin{align*}
	 	| \partial_{x^{j}} f(t,x,y)|^{2} + | f(t,x,y)|^{2} &\leq C_{T,f} |y|^{2} + F_{f}(t,x), \quad j = 1,2, 3, x \in \mbD, y \in \R^{6}, \\
	 	| \partial_{y^{l}} f(t,x,y) | &\leq C_{T,f}, \quad l = 1,\ldots, 6, x \in \mbD, y \in \R^{6}.
	\end{align*}
	\item\label{STMHD_prel_itm_ass_H2} For any $T > 0$, for the function $\Sigma = \begin{pmatrix} \bm{\sigma}, \bar{\bm{\sigma}}\end{pmatrix}$ with $\bm{\sigma}$, $\bar{\bm{\sigma}} \colon [0,T] \times \mbD \rightarrow \ell^{2}(\R^{3})$, there are constants $C_{\sigma,T}, C_{\bar{\sigma},T} > 0$ such that for $j=1,2,3$
	\begin{align*}
		\sup_{t \in [0,T], x \in \mbD} \| \partial_{x^{j}} \bm{\sigma}(t,x) \|_{\ell^{2}} \leq C_{\sigma,T}, \quad \sup_{t \in [0,T], x \in \mbD} \| \partial_{x^{j}} \bar{\bm{\sigma}}(t,x) \|_{\ell^{2}} \leq C_{\bar{\sigma},T},
	\end{align*}
	as well as
	\begin{align*}
		\sup_{t \in [0,T], x \in \mbD} \| \Sigma(t,x) \|_{\ell^{2}}^{2} \leq \frac{1}{36}.
	\end{align*}
	\item\label{STMHD_prel_itm_ass_H3} For any $T > 0$, for the function $H = (\bm{h}, \bar{\bm{h}})$ with $\bm{h}, \bar{\bm{h}} \colon [0,T] \times \mbD \times \R^{6} \rightarrow  \ell^{2}(\R^{3})$, there exists a constant $C_{T,H} > 0$ and $F_{H}(t,x) \in L^{\infty}([0,T];L^{1}(\mbD))$ such that for any $0 \leq t \leq T$, $x \in \mbD$, $y \in \R^{6}$ and $j=1,2,3$, $l = 1,2, \ldots, 6$
	\begin{align*}
		\| \partial_{x^{j}} H(t,x,y) \|_{\ell^{2}}^{2} + \| H(t,x,y) \|_{\ell^{2}}^{2} &\leq C_{T,H} | y |^{2} + F_{H}(t,x), \\
		\| \partial_{y^{l}} H(t,x,y) \|_{\ell^{2}} &\leq C_{T,H}.
	\end{align*}
\end{enumerate}
\begin{remark}
	The origin of the constant $36 = 4 \cdot 3^{2}$ in Assumption \ref{STMHD_prel_itm_ass_H2} lies in the fact that in the places where we need the numerical value, i.e. in the proof of \eqref{STMHD_prel_eq_B_L2l2H1} of Lemma \ref{STMHD_thm_estB} as well as in the proof of Lemma \ref{STMHD_FP_thm_Lemma}, we estimate the homogeneous second-order Sobolev norm against the Bessel potential norm via Lemma \ref{STMHD_thm_Besselpot_est} below, giving an additional factor of $9 = 3^2$. We do not claim that this value for $\Sigma$ is sharp, but we wanted to give an explicit bound that suffices to make all the calculations work.
	
	The integrability conditions $F_{f}, F_{H} \in L^{\infty}([0,T];L^{1}(\mbD))$ are used in proving continuity in time, as we want to estimate
	$$
		\int_{s}^{t} \| F_{f,H}(r) \|_{L^{1}(\mbD)} \diff  r \leq C |t-s|
	$$
in the proof of Lemma \ref{STMHD_thm_tightness}.

One could also model the equations in a way that the terms $\bm{f}_{v}$ and $\bm{h}$ in the equations of $\bm{v}$ depend only on $\bm{v}$ instead of $y$. However, this case is included in our assumptions, which are more symmetric this way.
\end{remark}

We define the set of solenoidal test functions as
\begin{align*}
	\mcV := \{ y = \begin{pmatrix}
	\bm{v}, \bm{B}
\end{pmatrix} ~|~ \bm{v}, \bm{B} \in C_{c}^{\infty}(\mbD;\R^{3}), \divv(\bm{v}) = \divv(\bm{B}) = 0 \}.
\end{align*}
As in \cite[Lemma 2.1]{RZ09a}, $\mcV$ is dense in $\mcH^{k}$ for any $k \in \N$.
Let $P \colon L^{2}(\mbD;\R^{3}) \rightarrow \mbH^{0}$ be the Leray-Helmholtz projection. In the case of $\mbD \in \{ \mbT, \R^{3} \}$, $P$ commutes with derivative operators (cf. \cite[Lemma 2.9, p. 52]{RRS16}) and can be restricted to a bounded linear operator
\begin{align*}
	P \restr{H^{m}} \colon H^{m} \rightarrow \mbH^{m}.
\end{align*} 
Furthermore, consider the tensorised projection
\begin{align*}
	\mathcal{P} := P \otimes P, \quad \mcP y := (P \otimes P) \begin{pmatrix} \bm{v} \\ \bm{B}	\end{pmatrix} = \begin{pmatrix} P \bm{v} \\ P \bm{B} \end{pmatrix}.
\end{align*}
Then $\mcP \colon \mcL^{2} \rightarrow \mcH^{0}$ is a bounded linear operator:
\begin{align*}
	\| \mcP y \|_{\mcH^{0}}^{2} = \| P \bm{v} \|_{\mbH^{0}}^{2} + \| P \bm{B} \|_{\mbH^{0}}^{2} \leq \| P \|_{L^{2} \rightarrow \mbH^{0}}^{2} \left( \| \bm{v} \|_{L^{2}}^{2} + \| B \|_{L^{2}}^{2} \right) = \| P \|_{L^{2} \rightarrow \mbH^{0}}^{2} \| y \|_{\mathcal{L}^{2}}^{2}.
\end{align*}
We now define operators
\begin{align*}
	\mcA(y) := \mcP \D y - \mcP \begin{pmatrix}
		(\bm{v} \cdot \nabla) \bm{v}  - (\bm{B} \cdot \nabla) \bm{B} \\ (\bm{v} \cdot \nabla) \bm{B} - (\bm{B} \cdot \nabla) \bm{v} 
	\end{pmatrix} - \mcP \left( g_{N}(|y|^{2})y \right),
\end{align*}
\begin{equation}\label{STMHD_prel_eq_def_llbracket}
	\langle \mcA(y), \tilde{y} \rangle_{\mcH^{1}} = \left\langle \mcA(y), (I - \D) \tilde{y} \right\rangle_{\mcH^{0}} = \mcA_{1}(y,\tilde{y}) + \mcA_{2}(y,\tilde{y}) + \mcA_{3}(y,\tilde{y}),
\end{equation}
where
\begin{align*}
	\mcA_{1}(y,\tilde{y}) &:= \left\langle \mcP \D y, (I - \D) \tilde{y} \right\rangle_{\mcH^{0}}, \\
	\mcA_{2}(y,\tilde{y}) &:= -\left\langle \mcP \begin{pmatrix}
		(\bm{v} \cdot \nabla) \bm{v}  - (\bm{B} \cdot \nabla) \bm{B} \\ (\bm{v} \cdot \nabla) \bm{B} - (\bm{B} \cdot \nabla) \bm{v} 
	\end{pmatrix}, (I - \D) \tilde{y} \right\rangle_{\mcH^{0}}, \\
	\mcA_{3}(y,\tilde{y}) &:= - \left\langle \mcP g_{N}(|y|^{2}) y, (I - \D) \tilde{y} \right\rangle_{\mcH^{0}}.
\end{align*}
For the noise terms, we define $\Sigma(t,x) := \begin{pmatrix}
	\bm{\sigma}_{k}(t,x) \\ \bar{\bm{\sigma}}_{k}(t,x)
\end{pmatrix}_{k \in \N} \in \ell^{2}(\R^{6})$ and for $y = \begin{pmatrix}
	\bm{v} \\ \bm{B} \end{pmatrix}$
\begin{align*}
	\left( \Sigma_{k}(t,x) \cdot \nabla \right) y := \begin{pmatrix}
	(\bm{\sigma}_{k}(t,x) \cdot \nabla ) \bm{v} \\ (\bar{\bm{\sigma}}_{k}(t,x) \cdot \nabla) \bm{B}
\end{pmatrix}.
\end{align*}
Similarly, we define $H_{k}(t,y) := \begin{pmatrix}
	\bm{h}_{k}(t,y) \\ \bar{\bm{h}}_{k}(t,y)
\end{pmatrix}$, and
\begin{align*}
	\mcB_{k}(t,x,y) := \mcP \left( (\Sigma_{k}(t,x) \cdot \nabla) y \right) + \mcP H_{k}(t,x,y).
\end{align*}
Finally, let $\{ W_{t}^{k} ~|~ t \in \R_{+}, k \in \N \}, \{ \bar{W}_{t}^{k} ~|~ t \in \R_{+}, k \in \N \}$ be two independent sequences of independent Brownian motions, and define
\begin{align*}
	\mcW_{t}^{k} := \begin{pmatrix}
			W_{t}^{k} \\ \bar{W}_{t}^{k}
	\end{pmatrix}.
\end{align*}
The stochastic integral in our equations is defined by
\begin{align*}
	 \int\limits_{0}^{t} \mcB_{k}(s,x,y) \diff \mcW_{s}^{k} := \begin{pmatrix}
	 	\int\limits_{0}^{t} P (\bm{\sigma}_{k}(s,x) \cdot \nabla )\bm{v} + P \bm{h}_{k}(s,x,y) \diff W_{s}^{k} \\
	 	\int\limits_{0}^{t} P (\bar{\bm{\sigma}}_{k}(s,x) \cdot \nabla )\bm{B} + P \bar{\bm{h}}_{k}(s,x,y) \diff \bar{W}_{s}^{k}
	 \end{pmatrix}.
\end{align*}
The Brownian motions $W$ and $\bar{W}$ can be understood as independent cylindrical Brownian motions on the space $\ell^{2}$. Similarly, $\mcW$ is a cylindrical Brownian motion on the space $\ell^{2} \times \ell^{2}$. For $y \in \mcH^{m}$, $m=1,2$, $\mcB(t,x,y(x))$ can be understood as a linear operator
\begin{align*}
	\mcB(t,\cdot,y) \colon \ell^{2} \times \ell^{2} \rightarrow \mcH^{m-1}.
\end{align*}
To make this clear, we note that if we take the canonical basis of $\ell^{2}$, i.e. orthonormal basis consisting of the sequences $e_{1} := (1,0,0, \ldots), e_{2} := (0,1,0,0,\ldots), \ldots$, then the system 
\begin{align*}
	\left\{ \begin{pmatrix}
		e_{k} \\ 0
	\end{pmatrix} \in \ell^{2} \times \ell^{2} ~|~ k \in \N \right\} \cup \left\{
	\begin{pmatrix}
		0 \\ e_{k}
	\end{pmatrix} \in \ell^{2} \times \ell^{2} ~|~ k \in \N \right\}
\end{align*}
forms an orthonormal basis of $\ell^{2} \times \ell^{2}$. Then we define 
\begin{align*}
	\mcB(t,\cdot,y) \begin{pmatrix}
		e_{k} \\ 0
	\end{pmatrix}(x) := \begin{pmatrix}
		P (\bm{\sigma}_{k}(t,x) \cdot \nabla) \bm{v}(x) + P \bm{h}_{k}(t,x,y(x)) \\ 0
	\end{pmatrix} \in \mcH^{m-1},
\end{align*}
and
\begin{equation}\label{STMHD_eq_ONB_ell2}
	\mcB(t,\cdot,y) \begin{pmatrix}
		0 \\ e_{k}
	\end{pmatrix}(x) := \begin{pmatrix}
		0 \\ P (\bar{\bm{\sigma}}_{k}(t,x) \cdot \nabla) \bm{v}(x) + P \bar{\bm{h}}_{k}(t,x,y(x)) 
	\end{pmatrix} \in \mcH^{m-1}.
\end{equation}
It turns out that $\mcB$ is even a Hilbert-Schmidt operator, i.e. $\mcB(t,\cdot,y) \in L_{2}(\ell^{2} \times \ell^{2} ; \mcH^{m-1})$, as will be proved below in Lemma \ref{STMHD_thm_estB}. Hence we are in the usual framework of stochastic analysis on Hilbert spaces, cf. \cite{DPZ92} or \cite{LR15}.

We can now formulate Equation \eqref{STMHD_eq_STMHD} in the following abstract form as an evolution equation:
\begin{equation}\label{STMHD_eq_evol_eqn}
	\begin{cases}
		\diff y(t) &= [\mcA(y(t)) + \mcP f(t,y(t))]\diff t + \sum_{k=1}^{\infty} \mcB_{k}(t,y(t)) \diff \mcW_{t}^{k}, \\
		y(0) &= y_{0} \in \mcH^{1}.
	\end{cases}
\end{equation}
\subsection{Estimates on the Operators \texorpdfstring{$\mathcal{A}$}{A} and \texorpdfstring{$\mathcal{B}$}{B}}\label{STMHD_ssec_estimates_lemmas}
In this section, we will collect important but elementary estimates on the operators $\mcA$ and $\mcB$. These play an important role in deriving the \emph{a priori} estimates for the STMHD equations. The first lemma is concerned with estimates for the case of testing with the solution $y$ itself.
\begin{lemma}[Estimates for $\mcA$]\label{STMHD_thm_estA}
	Let $y \in \mcH^{2}$. Then the following estimates hold true
	\begin{equation}\label{STMHD_prel_eq_A_bdd}
		\| \mcA(y) \|_{\mcH^{0}} \leq C \left( 1 + \|y\|_{\mcH^{0}}^{6} + \| y \|_{\mcH^{2}}^{2} \right),
	\end{equation}
	\begin{align}
		\label{STMHD_prel_eq_A_testing1} \langle \mcA(y), y \rangle_{\mcH^{0}} &= - \| \nabla y \|_{\mcH^{0}}^{2} - \| \sqrt{g_{N}(|y|^{2})} |y| \|_{\mcL^{2}}^{2} \\
		\label{STMHD_prel_eq_A_testing2} 
		&\leq - \| \nabla y \|_{\mcH^{0}}^{2} - \| y \|_{\mcL^{4}}^{2} + CN \| y \|_{\mcH^{0}}^{2},
	\end{align}
	\begin{equation}\label{STMHD_prel_eq_A_H1_testing}
		\begin{split}
		\langle \mcA(y),y \rangle_{\mcH^{1}} &\leq - \frac{1}{2} \| y \|_{\mcH^{2}}^{2} - \Big( \| |\bm{v}| \cdot |\nabla \bm{v} | \|_{L^{2}}^{2} + \| |\bm{B}| \cdot |\nabla \bm{B} | \|_{L^{2}}^{2} \\
		&\quad + \| |\bm{v}| \cdot |\nabla \bm{B} | \|_{L^{2}}^{2} + \| |\bm{B}| \cdot |\nabla \bm{v} | \|_{L^{2}}^{2} \Big) \\
		&\quad + (2N+1) \| \nabla y \|_{\mcH^{0}}^{2} + \| y \|_{\mcH^{0}}^{2}.
		\end{split}
	\end{equation}
\end{lemma}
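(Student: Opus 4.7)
The natural approach is to decompose $\mcA(y)$ into its linear part $\mcP\D y$, its convective (quadratic) part, and its taming part $-\mcP(g_{N}(|y|^{2})y)$, and to treat each summand separately using integration by parts, Sobolev embeddings in three dimensions, and Young's inequality. Throughout I will use that $\mcP$ is bounded on $\mcL^{2}$, is self-adjoint, commutes with $\D$, and acts as the identity on divergence-free fields.

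For \eqref{STMHD_prel_eq_A_bdd} the linear piece contributes $\|\mcP \D y\|_{\mcH^{0}} \leq \|y\|_{\mcH^{2}}$. For the convective piece I bound $\|(\bm{v}\cdot\nabla)\bm{v}\|_{L^{2}} \leq \|\bm{v}\|_{L^{\infty}}\|\nabla\bm{v}\|_{L^{2}}$ using the 3D Sobolev embedding $H^{2}\hookrightarrow L^{\infty}$ and the interpolation $\|\bm{v}\|_{H^{1}}^{2} \leq \|\bm{v}\|_{L^{2}}\|\bm{v}\|_{H^{2}}$, followed by Young's inequality with exponents $(4/3,4)$; the three cross terms are handled symmetrically. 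For the taming piece, the linear growth of $g_{N}$ gives $\|g_{N}(|y|^{2})y\|_{\mcL^{2}} \leq C(\|y\|_{\mcL^{6}}^{3} + \|y\|_{\mcL^{2}})$; the Sobolev embedding $\mcH^{1}\hookrightarrow\mcL^{6}$, the interpolation $\|y\|_{\mcH^{1}} \leq C\|y\|_{\mcH^{0}}^{1/2}\|y\|_{\mcH^{2}}^{1/2}$, and Young with exponents $(4,4/3)$ then yield the claimed $\|y\|_{\mcH^{0}}^{6} + \|y\|_{\mcH^{2}}^{2}$ bound.

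The identity \eqref{STMHD_prel_eq_A_testing1} is obtained directly: integration by parts in the linear part gives $-\|\nabla y\|_{\mcH^{0}}^{2}$, the taming part reproduces $-\|\sqrt{g_{N}(|y|^{2})}|y|\|_{\mcL^{2}}^{2}$ by construction, and the convective part vanishes by the classical MHD energy cancellation: the four trilinears $\langle (\bm{v}\cdot\nabla)\bm{v},\bm{v}\rangle$, $-\langle (\bm{B}\cdot\nabla)\bm{B},\bm{v}\rangle$, $\langle (\bm{v}\cdot\nabla)\bm{B},\bm{B}\rangle$, $-\langle (\bm{B}\cdot\nabla)\bm{v},\bm{B}\rangle$ sum to zero by integration by parts and $\divv\bm{v} = \divv\bm{B} = 0$. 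The subsequent inequality \eqref{STMHD_prel_eq_A_testing2} follows from the pointwise bound $g_{N}(r) \geq (r-(N+1))_{+}$: splitting $\mbD$ into $\{|y|^{2}\leq N+1\}$ and its complement yields $\int g_{N}(|y|^{2})|y|^{2}\,dx \geq \|y\|_{\mcL^{4}}^{4} - CN\|y\|_{\mcL^{2}}^{2}$, from which the stated control of $\|y\|_{\mcL^{4}}^{2}$ is immediate (e.g.\ via $\|y\|_{\mcL^{4}}^{2} \leq 1 + \|y\|_{\mcL^{4}}^{4}$).

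The main work is \eqref{STMHD_prel_eq_A_H1_testing}. Using \eqref{STMHD_prel_eq_def_llbracket} I write $\langle \mcA(y),y\rangle_{\mcH^{1}} = \mcA_{1}(y,y) + \mcA_{2}(y,y) + \mcA_{3}(y,y)$ and compute each piece. Two integrations by parts give $\mcA_{1}(y,y) = -\|\nabla y\|_{\mcH^{0}}^{2} - \|\D y\|_{\mcH^{0}}^{2}$, which via $\|y\|_{\mcH^{2}}^{2} = \|y\|_{\mcH^{0}}^{2} + 2\|\nabla y\|_{\mcH^{0}}^{2} + \|\D y\|_{\mcH^{0}}^{2}$ becomes $-\|y\|_{\mcH^{2}}^{2} + \|\nabla y\|_{\mcH^{0}}^{2} + \|y\|_{\mcH^{0}}^{2}$. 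In $\mcA_{2}$ the pairing against $y$ vanishes by the same MHD cancellation, while the pairing against $-\D y$ produces four contributions of the form $\int (\bm{u}\cdot\nabla)\bm{w}\cdot\D\bm{z}\,dx$ with $\bm{u},\bm{w},\bm{z}\in\{\bm{v},\bm{B}\}$; each is bounded by Cauchy--Schwarz by one of the four bilinear norms times $\|\D y\|_{\mcH^{0}}$, and Young's inequality with a small $\epsilon$ splits it as $\epsilon\|\D y\|_{\mcH^{0}}^{2} + C_{\epsilon}\cdot(\textrm{bilinear})$. In $\mcA_{3}$, one integration by parts produces three non-positive terms $-\int g_{N}(|y|^{2})|y|^{2}\,dx$, $-\int g_{N}(|y|^{2})|\nabla y|^{2}\,dx$ and $-2\int g_{N}'(|y|^{2})\bigl|\sum_{j}y^{j}\nabla y^{j}\bigr|^{2}\,dx$; retaining only the middle one, applying $g_{N}(r) \geq (r-(N+1))_{+}$, and decomposing $|y|^{2}|\nabla y|^{2} = |\bm{v}|^{2}|\nabla\bm{v}|^{2} + |\bm{v}|^{2}|\nabla\bm{B}|^{2} + |\bm{B}|^{2}|\nabla\bm{v}|^{2} + |\bm{B}|^{2}|\nabla\bm{B}|^{2}$ recovers exactly the four bilinear norms with the correct sign, at the cost of an $(N+1)\|\nabla y\|_{\mcH^{0}}^{2}$ error. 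The main obstacle is the careful bookkeeping of constants: the Young parameter $\epsilon$ in $\mcA_{2}$ must be tuned so that $\epsilon\|\D y\|_{\mcH^{0}}^{2}$ combines with $-\|\D y\|_{\mcH^{0}}^{2}$ from $\mcA_{1}$ to leave $-\tfrac{1}{2}\|\D y\|_{\mcH^{0}}^{2}$, which via Lemma \ref{STMHD_thm_Besselpot_est} translates to the claimed $-\tfrac{1}{2}\|y\|_{\mcH^{2}}^{2}$, while simultaneously the $C_{\epsilon}$ coefficient on the bilinear terms must be dominated by the negative bilinear contribution from $\mcA_{3}$ so that the net coefficient of the four bilinear norms matches the claim.
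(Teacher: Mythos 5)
Your overall route---splitting $\mcA$ into linear, convective and taming parts, the MHD energy cancellation for \eqref{STMHD_prel_eq_A_testing1}, and using the non-positive term $-\int g_{N}(|y|^{2})|\nabla y|^{2}\,\diff x$ together with the pointwise identity $|y|^{2}|\nabla y|^{2}=|\bm{v}|^{2}|\nabla\bm{v}|^{2}+|\bm{v}|^{2}|\nabla\bm{B}|^{2}+|\bm{B}|^{2}|\nabla\bm{v}|^{2}+|\bm{B}|^{2}|\nabla\bm{B}|^{2}$ to absorb the convective contributions---is exactly the standard one; the paper gives no details and simply defers to \cite{Schenke20b}, which argues the same way. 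Your treatment of \eqref{STMHD_prel_eq_A_bdd} and \eqref{STMHD_prel_eq_A_testing1} is correct, and your lower bound $\int g_{N}(|y|^{2})|y|^{2}\,\diff x \geq \|y\|_{\mcL^{4}}^{4}-(N+1)\|y\|_{\mcH^{0}}^{2}$ is the substantive content of \eqref{STMHD_prel_eq_A_testing2}: note that it is the fourth power that is actually needed downstream (Lemma \ref{STMHD_thm_apriori}, Theorem \ref{STMHD_thm_inv_ms_ex}), and your passage to $\|y\|_{\mcL^{4}}^{2}$ produces an additive constant absent from the printed statement --- a defect of the statement rather than of your argument.

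The one genuine gap is the coefficient bookkeeping in \eqref{STMHD_prel_eq_A_H1_testing}, which you correctly identify as the crux but do not close, and which \emph{cannot} be closed with the bound $g_{N}(r)\geq(r-(N+1))_{+}$ you invoke. Concretely: to be left with $-\tfrac12\|\D y\|_{\mcH^{0}}^{2}$ after combining $\mcA_{2}$ with the $-\|\D y\|_{\mcH^{0}}^{2}$ from $\mcA_{1}$, the four Young splittings in $\mcA_{2}$ can each spend at most $\tfrac14\|\D\bm{v}\|_{L^{2}}^{2}$ or $\tfrac14\|\D\bm{B}\|_{L^{2}}^{2}$, which forces the conjugate constant to be $1$; hence $\mcA_{2}$ reintroduces the sum of the four bilinear norms with coefficient $+1$. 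Your taming bound makes $\mcA_{3}$ contribute that same sum with coefficient exactly $-1$, so the net coefficient is $0$, not the $-1$ claimed in \eqref{STMHD_prel_eq_A_H1_testing}. The fix is to use the actual definition \eqref{DTMHD_eq_def_g_N} rather than the weaker pointwise bound: for $C_{\text{taming}}\geq 2$ one has $g_{N}(r)\geq 2(r-N-1)$ for all $r\geq 0$, so that $-\int g_{N}(|y|^{2})|\nabla y|^{2}\,\diff x$ kills the bilinear norms with coefficient $-2$ at the cost of a $2(N+1)\|\nabla y\|_{\mcH^{0}}^{2}$ error, yielding the net coefficient $-1$ and the $O(N)\|\nabla y\|_{\mcH^{0}}^{2}$ term in the statement. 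This is a one-line repair, but as written your $\varepsilon$--$C_{\varepsilon}$ trade-off admits no valid choice of $\varepsilon$.
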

\begin{proof}
	This follows in the same way as \cite[Lemma 2.2, p. 9]{Schenke20b}.
\end{proof}
The next lemma deals with the case of testing the right-hand side of our equation with another, arbitrary, compactly supported test function.
\begin{lemma}[Estimates for $\mcA$ and $\mcB$]\label{STMHD_thm_estAB}
	Let $\tilde{y} \in \mcV$ with compact support $\supp( \tilde{y}) \subset \mcO := \{ x \in \mbD ~|~ |x| \leq m \}$ for some $m \in \N$. Let $T>0$. Then for any $y, y' \in \mcH^{2}$ and $t \in [0,T]$, we have
	\begin{align}
		\label{STMHD_prel_eq_A_L3_est}| \langle \mcA(y), \tilde{y} \rangle_{\mcH^{1}} | &\leq C_{\tilde{y}} \left( 1 + \| y \|_{L^{3}(\mcO)}^{3} \right), \\
		\label{STMHD_prel_eq_B_H1_est} \| \langle \mcB_{\cdot}(t,y), \tilde{y} \rangle_{\mcH^{1}} \|_{\ell^{2}}^{2} &\leq C_{\tilde{y},\Sigma, H, T} \left(  \| F_{H}(t) \|_{L^{1}(\mbD)} + \| y \|_{L^{2}(\mcO)}^{2}  \right),
	\end{align}
	and
	\begin{equation}\label{STMHD_prel_eq_stability_est}
		| \langle \mcA(y) - \mcA(y'), \tilde{y} \rangle_{\mcH^{1}}| \leq C_{\tilde{y},N} \| y - y' \|_{\mcL^{2}}\left( 1 + \| y \|_{\mcH^{1}}^{2} + \| y' \|_{\mcH^{1}}^{2}  \right).
	\end{equation}
\end{lemma}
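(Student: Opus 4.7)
\medskip

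\noindent\textbf{Proof proposal.} The strategy for all three bounds is to exploit the identity $\langle \mcA(y),\tilde y\rangle_{\mcH^{1}} = \langle \mcA(y),(I-\Delta)\tilde y\rangle_{\mcH^{0}}$ (and similarly for $\mcB$) together with the fact that, since $\tilde y \in \mcV$ with $\supp(\tilde y)\subset\mcO$, the auxiliary test function $\tilde\phi:=(I-\Delta)\tilde y$ is again smooth, compactly supported in $\mcO$, and all its derivatives may be absorbed into constants depending on $\tilde y$. In particular, every integration by parts will either move a derivative from $y$ onto $\tilde\phi$ (harmless) or pick up a divergence term that vanishes because $\bm v,\bm B$ are solenoidal. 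I will treat the three pieces $\mcA_{1},\mcA_{2},\mcA_{3}$ from \eqref{STMHD_prel_eq_def_llbracket} separately.

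For \eqref{STMHD_prel_eq_A_L3_est}, I would integrate by parts twice in $\mcA_{1}(y,\tilde y)$ to obtain $\langle y,\Delta\tilde\phi\rangle_{\mcH^{0}}$, estimate it by $C_{\tilde y}\|y\|_{L^{1}(\mcO)}$, and conclude with Hölder on the bounded set $\mcO$ followed by Young's inequality, $\|y\|_{L^{1}(\mcO)}\le C_{\mcO}\|y\|_{L^{3}(\mcO)}\le C_{\mcO}(1+\|y\|_{L^{3}(\mcO)}^{3})$. For $\mcA_{2}(y,\tilde y)$, I would integrate by parts once in the convective terms, using $\divv\bm v=\divv\bm B=0$ to kill boundary-type contributions, reducing each summand to quadratic expressions of the form $\int_{\mcO} y_{i}y_{j}\partial_{k}\tilde\phi_{l}$, bounded by $C_{\tilde y}\|y\|_{L^{2}(\mcO)}^{2}$ and again controlled by $C_{\tilde y}(1+\|y\|_{L^{3}(\mcO)}^{3})$ via Hölder on $\mcO$. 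For $\mcA_{3}$, I use the bound $g_{N}(|y|^{2})\le C_{\text{taming}}(|y|^{2}+N)$, so that the integrand is pointwise $\le C_{N}(|y|+|y|^{3})\,|\tilde\phi|$ and integrating directly yields the $\|y\|_{L^{3}(\mcO)}^{3}$ bound.

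For \eqref{STMHD_prel_eq_B_H1_est}, I write $\mcB_{k}(t,y)=\mcP(\Sigma_{k}\cdot\nabla)y+\mcP H_{k}(t,y)$ and treat the two contributions independently. In the transport term I first integrate by parts, $\langle(\Sigma_{k}\cdot\nabla)y,\tilde\phi\rangle = -\langle y,(\Sigma_{k}\cdot\nabla)\tilde\phi\rangle-\langle y,(\divv\Sigma_{k})\tilde\phi\rangle$, then apply Cauchy–Schwarz in $x$ and sum in $k$ using Assumption \ref{STMHD_prel_itm_ass_H2}, which gives $\sum_{k}|\Sigma_{k}(t,x)|^{2}+|\partial\Sigma_{k}(t,x)|^{2}\le C_{\Sigma,T}$ pointwise and therefore $\sum_{k}|\langle (\Sigma_{k}\cdot\nabla)y,\tilde\phi\rangle|^{2}\le C_{\tilde y,\Sigma,T}\|y\|_{L^{2}(\mcO)}^{2}$. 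For the $H_{k}$ part, applying Minkowski's integral inequality in $\ell^{2}$ and then Cauchy–Schwarz in $x$ bounds the $\ell^{2}$-norm by $\big(\int_{\mcO}\|H(t,x,y(x))\|_{\ell^{2}}^{2}\,dx\big)\cdot\|\tilde\phi\|_{L^{2}(\mcO)}^{2}$, which by Assumption \ref{STMHD_prel_itm_ass_H3} is controlled by $C_{\tilde y,H,T}(\|F_{H}(t)\|_{L^{1}(\mbD)}+\|y\|_{L^{2}(\mcO)}^{2})$.

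For \eqref{STMHD_prel_eq_stability_est}, the linear part $\mcP\Delta(y-y')$ is immediate by the same integration by parts as above. In the convective part I use the algebraic splitting $(\bm v\cdot\nabla)\bm v-(\bm v'\cdot\nabla)\bm v' = ((\bm v-\bm v')\cdot\nabla)\bm v+(\bm v'\cdot\nabla)(\bm v-\bm v')$ (and analogously for the three MHD-type terms), integrate by parts to move derivatives onto $\tilde\phi$, and bound each resulting trilinear expression by $C_{\tilde y}\|y-y'\|_{\mcL^{2}}\|y\|_{\mcL^{2}(\mcO)}$ or $C_{\tilde y}\|y-y'\|_{\mcL^{2}}\|y'\|_{\mcL^{2}(\mcO)}$. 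The main technical point is the taming difference; there I use the pointwise identity
\begin{equation*}
  g_{N}(|y|^{2})y-g_{N}(|y'|^{2})y' = g_{N}(|y|^{2})(y-y')+\bigl(g_{N}(|y|^{2})-g_{N}(|y'|^{2})\bigr)y',
\end{equation*}
together with $|g_{N}(r)|\le C_{\text{taming}}(r+N)$ and the mean-value bound $|g_{N}(|y|^{2})-g_{N}(|y'|^{2})|\le C_{1}(|y|+|y'|)|y-y'|$ coming from \eqref{DTMHD_eq_def_g_N}, to obtain the pointwise estimate $|g_{N}(|y|^{2})y-g_{N}(|y'|^{2})y'|\le C_{N}(1+|y|^{2}+|y'|^{2})|y-y'|$. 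Integrating against $\tilde\phi$ and invoking the Sobolev embedding $\mcH^{1}\hookrightarrow\mcL^{4}$ in three dimensions to control $\||y|^{2}\|_{L^{2}}\le C\|y\|_{\mcH^{1}}^{2}$ delivers the announced $(1+\|y\|_{\mcH^{1}}^{2}+\|y'\|_{\mcH^{1}}^{2})$ factor. The taming step is the only delicate one; the rest are routine integration by parts combined with the smoothness and compact support of $\tilde y$.
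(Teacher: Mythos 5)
Your proposal is correct and follows essentially the same route as the paper: rewrite $\langle\cdot,\cdot\rangle_{\mcH^{1}}$ as $\langle\cdot,(I-\Delta)\tilde y\rangle_{\mcH^{0}}$, move all derivatives onto the compactly supported test function by integration by parts (using $\divv\bm{v}=\divv\bm{B}=0$ for the convective terms and the product rule for the transport noise), and then control everything by H\"older/Young on the bounded set $\mcO$ together with Assumptions \ref{STMHD_prel_itm_ass_H2}--\ref{STMHD_prel_itm_ass_H3}. The only difference is that for \eqref{STMHD_prel_eq_stability_est} the paper simply cites \cite[Lemma 2.4]{RZ09b}, whereas you supply the standard bilinear splitting and the Lipschitz-type pointwise bound $|g_{N}(|y|^{2})y-g_{N}(|y'|^{2})y'|\leq C_{N}(1+|y|^{2}+|y'|^{2})|y-y'|$ explicitly; your argument is consistent with that reference and closes the estimate via the Sobolev embedding $\mcH^{1}\hookrightarrow\mcL^{4}$ as claimed.
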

\begin{proof}
	The first inequality follows easily from the following calculations:
	\begin{align*}
		\mcA_{1}(y,\tilde{y}) &= \langle y, (I-\D) \D \tilde{y} \rangle_{\mcH^{0}} \leq \| y \|_{L^{2}(\mcO)} \| (I-\D)\D \tilde{y} \|_{\mcH^{0}} \leq C_{\tilde{y}} \| y \|_{L^{3}(\mcO)} \| \tilde{y} \|_{\mcH^{4}},
	\end{align*}
	where the constant $C_{\tilde{y}}$ depends on the domain $\mcO$, and hence on $\tilde{y}$. For the next term we find
	\begin{align*}
		\mcA_{2}(y,\tilde{y}) &\leq \Big( \left( \| \bm{v} \|_{L^{2}(\mcO)}^{2} + \| \bm{B} \|_{L^{2}(\mcO)}^{2} \right) \cdot \sup_{x \in \mbD} |\nabla (I-\D) \tilde{\bm{v}}(x)| \\
		&\quad + 2 \| \bm{v} \|_{L^{2}(\mcO)} \| \bm{B} \|_{L^{2}(\mcO)}  \cdot \sup_{x \in \mbD} |\nabla (I-\D) \tilde{\bm{B}}(x)| \Big) \\
		&\leq 2 \| y \|_{\mcL^{2}(\mcO)}^{2} \cdot \sup_{x \in \mbD} |\nabla (I-\D) \tilde{\bm{y}}(x)| \\
		&\leq C_{\tilde{y}} \| y \|_{\mcL^{3}(\mcO)}^{2} \cdot \sup_{x \in \mbD} |\nabla (I-\D) \tilde{\bm{y}}(x)|.
	\end{align*}
	Finally, 
	\begin{align*}
		\mcA_{3}(y,\tilde{y})  \leq \| g_{N}(|y|^{2})y \|_{\mcL^{1}(\mcO)} \cdot \sup_{x \in \mbD} |(I-\D)\tilde{y}(x) |\leq C \| y \|_{\mcL^{3}(\mcO)}^{3} \sup_{x \in \mbD} |(I-\D)\tilde{y}(x) |.
	\end{align*}
	Combining the above estimates yields \eqref{STMHD_prel_eq_A_L3_est}.
	
	For the second estimate \eqref{STMHD_prel_eq_B_H1_est}, we have by the boundedness of the Leray-Helmholtz projections
	\begin{align*}
		&\| \langle \mcB_{\cdot}(t,y), \tilde{y} \rangle_{\mcH^{1}} \|_{\ell^{2}}^{2} \leq \left\| \langle (\Sigma \cdot \nabla) y, (I-\D)\tilde{y} \rangle_{\mcL^{2}} + \langle H(t,y),  (I-\D) \tilde{y} \rangle_{\mcL^{2}} \right\|_{\ell^{2}}^{2} \\
		&\leq 2 \Big( \sum_{k} \left| \left\langle (\Sigma_{k} \cdot \nabla) y, (I-\D) \tilde{y} \right\rangle_{\mcL^{2}} \right|^{2} + \left| \left\langle H_{k}(t,y),  (I-\D) \tilde{y} \right\rangle_{\mcL^{2}} \right|^{2} \Big).
	\end{align*}
	The first term can be estimated using the definitions of the scalar products and norms involved, integration by parts, the product rule and Jensen's inequality:
	\begin{align*}
		&\sum_{k} \left| \left\langle (\Sigma_{k} \cdot \nabla) y, (I-\D) \tilde{y} \right\rangle_{\mcL^{2}} \right|^{2} \\
&= \sum_{k} \Bigg| \int_{\mcO} \sum_{j,l=1}^{3} v^{l}  \left( \partial_{x^{j}}  \sigma_{k}^{j} \right) (I-\D) \tilde{v}^{l} + v^{l}   \sigma_{k}^{j}  (I-\D) \partial_{x^{j}} \tilde{v}^{l} \\
&\quad \quad +  B^{l}  \left( \partial_{x^{j}}  \bar{\sigma}_{k}^{j} \right) (I-\D) \tilde{B}^{l} + B^{l}   \bar{\sigma}_{k}^{j}  (I-\D) \partial_{x^{j}} \tilde{B}^{l} \diff x \Bigg|^{2} \\
&\leq 36 \lambda(\mcO) \sum_{k}  \int_{\mcO} \sum_{j,l=1}^{3} \left| v^{l}  \left( \partial_{x^{j}}  \sigma_{k}^{j} \right) (I-\D) \tilde{v}^{l} \right|^{2} + \left| v^{l}   \sigma_{k}^{j}  (I-\D) \partial_{x^{j}} \tilde{v}^{l} \right|^{2} \\
&\quad \quad +  \left| B^{l}  \left( \partial_{x^{j}}  \bar{\sigma}_{k}^{j} \right) (I-\D) \tilde{B}^{l} \right|^{2} + \left| B^{l}   \bar{\sigma}_{k}^{j}  (I-\D) \partial_{x^{j}} \tilde{B}^{l} \right|^{2} \diff x \\ 
&\leq 36 \lambda(\mcO) \bigg( \sup_{t \in [0,T], x \in \mbD} \| \nabla_{x} \Sigma_{\cdot}(t,x) \|_{\ell^{2}}^{2} \sup_{x \in \mbD} |(I-\D) \tilde{y}(x) |^{2} \\
&\quad + \sup_{t \in [0,T], x \in \mbD} \| \Sigma_{\cdot}(t,x) \|_{\ell^{2}}^{2} \sup_{x \in \mbD} |\nabla (I-\D) \tilde{y}(x) |^{2} \bigg) \| y \|_{\mcL^{2}(\mcO)}^{2}.
	\end{align*}
	We estimate this further by using Assumption \ref{STMHD_prel_itm_ass_H2}, arriving at
	\begin{align*}
	\sum_{k} \left| \left\langle (\Sigma_{k} \cdot \nabla) y, (I-\D) \tilde{y} \right\rangle_{\mcL^{2}} \right|^{2} \leq C_{\Sigma, T,\tilde{y}} \| y \|_{\mcL^{2}(\mcO)}^{2}.
	\end{align*}
	Similarly, for the second term we find ($\lambda$ denotes the Lebesgue measure)
	\begin{align*}
		&\sum_{k} \left| \left\langle H_{k}(t,y),  (I-\D) \tilde{y} \right\rangle_{\mcL^{2}} \right|^{2} \\
		&\leq 2 \lambda(O) \sum_{k} \int_{\mcO}
			\left| \langle \bm{h}_{k}(t,x,y) , (I-\D) \tilde{\bm{v}}(x) \rangle_{\R^{3}} \right|^{2} + \left| \langle \bar{\bm{h}}_{k}(t,x,y), (I - \D) \tilde{\bm{B}}(x) \rangle_{\R^{3}} \right|^{2} \diff x \\
		&\leq 2 \lambda(O) \sup_{x \in \mbD} |(I-\D) \tilde{\bm{v}}(x)|^{2} \int_{\mcO} \| \bm{h}_{\cdot}(t,x,y) \||_{\ell^{2}}^{2} + \sup_{x \in \mbD} |(I-\D) \tilde{\bm{B}}(x)|^{2} \| \bar{\bm{h}}_{\cdot}(t,x,y) \|_{\ell^{2}}^{2} \diff x \\
		&\leq 2 \lambda(O)C_{T,H}  \sup_{x \in \mbD} |(I-\D) \tilde{y}(x)|^{2} \int_{\mcO} |y(x)|^{2} + F_{H}(t,x) \diff x \\
		&\leq C_{H,T,\tilde{y}} \left( \| y \|_{\mcL^{2}(\mcO)}^{2} + \| F_{H}(t) \|_{L^{1}(\mbD)} \right).
	\end{align*}
	Thus, altogether we have proved \eqref{STMHD_prel_eq_B_H1_est}.
	
	Estimate \eqref{STMHD_prel_eq_stability_est} follows in the same way as \cite[Lemma 2.4, Equation $(2.13)$, pp. 218 f.]{RZ09b}.
\end{proof}
\begin{lemma}[Estimates for $\mcB$]\label{STMHD_thm_estB}
 For any $T > 0$, $0 \leq t \leq T$ and $y \in \mcH^{2}$, the following estimates hold:
 \begin{align}
 	\label{STMHD_prel_eq_B_L2l2H0} \| \mcB(t,y) \|_{L_{2}(\ell^{2} \times \ell^{2};\mcH^{0})}^{2} &\leq \frac{1}{2} \| y \|_{\mcH^{1}}^{2} + C_{T,H} \| y \|_{\mcH^{0}}^{2} + \| F_{H}(t) \|_{L^{1}(\mbD)}, \\
 	\label{STMHD_prel_eq_B_L2l2H1} \| \mcB(t,y) \|_{L_{2}(\ell^{2}\times \ell^{2};\mcH^{1})}^{2} &\leq \frac{1}{2} \| y \|_{\mcH^{2}}^{2} + C_{T,H,\Sigma} \| y \|_{\mcH^{1}}^{2} + C \| F_{H}(t) \|_{L^{1}(\mbD)}.
 \end{align}
\end{lemma}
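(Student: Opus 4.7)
My plan is to compute $\|\mcB(t,y)\|_{L_2(\ell^2\times\ell^2;\mcH^m)}^2$ as $\sum_k\|\mcB_k(t,y)\|_{\mcH^m}^2$ using the orthonormal basis displayed in \eqref{STMHD_eq_ONB_ell2}, and reduce the task to $\mcL^2$ estimates via two observations: the Leray--Helmholtz projection $\mcP$ is a contraction on $\mcL^2$, and it commutes with $\partial_{x^i}$ on $\mbD\in\{\R^3,\mbT^3\}$, so that by Plancherel's theorem the Bessel-potential $\mcH^1$-norm equals $\|\cdot\|_{\mcL^2}^2+\sum_i\|\partial_i\cdot\|_{\mcL^2}^2$. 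Throughout, the key pointwise inequality is
$\sum_k|\Sigma_k(t,x)\cdot\nabla w(x)|^2\leq \|\Sigma(t,x)\|_{\ell^2}^2|\nabla w(x)|^2\leq \tfrac{1}{36}|\nabla w(x)|^2$, which follows from Cauchy--Schwarz in the contraction index together with Assumption \ref{STMHD_prel_itm_ass_H2}.

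For \eqref{STMHD_prel_eq_B_L2l2H0} I dominate $\sum_k\|\mcB_k\|_{\mcH^0}^2\leq \sum_k\|(\Sigma_k\cdot\nabla)y+H_k(t,y)\|_{\mcL^2}^2$ and split via Young's inequality $(a+b)^2\leq (1+\eps)a^2+(1+\eps^{-1})b^2$. Integrating the pointwise bound above gives $\sum_k\|(\Sigma_k\cdot\nabla)y\|_{\mcL^2}^2\leq \tfrac{1}{36}\|\nabla y\|_{\mcL^2}^2$, while Assumption \ref{STMHD_prel_itm_ass_H3} yields $\sum_k\|H_k(t,y)\|_{\mcL^2}^2\leq C_{T,H}\|y\|_{\mcL^2}^2+\|F_H(t)\|_{L^1(\mbD)}$. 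The choice $\eps=17$ makes $(1+\eps)/36=\tfrac{1}{2}$, which is the claimed leading coefficient on $\|\nabla y\|_{\mcL^2}^2\leq \|y\|_{\mcH^1}^2$, and the $(1+\eps^{-1})$ prefactor on the $H$-contribution is absorbed into $C_{T,H}$.

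For \eqref{STMHD_prel_eq_B_L2l2H1} I split $\|\mcB_k\|_{\mcH^1}^2\leq \|\mcB_k\|_{\mcL^2}^2+\sum_i\|\partial_i\mcB_k\|_{\mcL^2}^2$; the first summand is controlled by \eqref{STMHD_prel_eq_B_L2l2H0} and contributes only lower-order $\|y\|_{\mcH^1}^2$-terms. For the derivative piece, the product and chain rules give
\begin{align*}
\partial_i\bigl[(\Sigma_k\cdot\nabla)y\bigr]&=(\partial_i\Sigma_k\cdot\nabla)y+(\Sigma_k\cdot\nabla)\partial_iy,\\
\partial_i H_k(t,x,y(x))&=\partial_{x^i}H_k(t,x,y)+\sum_{l=1}^6\partial_{y^l}H_k(t,x,y)\,\partial_iy^l,
\end{align*}
and the only leading-order term is $(\Sigma_k\cdot\nabla)\partial_iy$, for which the pointwise bound together with Lemma \ref{STMHD_thm_Besselpot_est} yields
\begin{align*}
\sum_k\sum_i\|(\Sigma_k\cdot\nabla)\partial_iy\|_{\mcL^2}^2\leq \tfrac{1}{36}\sum_{i,j}\|\partial_i\partial_jy\|_{\mcL^2}^2\leq \tfrac{9}{36}\|y\|_{\mcH^2}^2=\tfrac{1}{4}\|y\|_{\mcH^2}^2.
\end{align*}
The three remaining terms each carry at most one factor $\nabla y$ with coefficients bounded via Assumptions \ref{STMHD_prel_itm_ass_H2}--\ref{STMHD_prel_itm_ass_H3}, so they produce a contribution $\leq C_{T,H,\Sigma}(\|y\|_{\mcH^1}^2+\|F_H(t)\|_{L^1})$. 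Applying $(a+b+c+d)^2\leq 2a^2+6(b^2+c^2+d^2)$ (Young with $\eps=1$) on the four-term decomposition of $\partial_i\mcB_k$ doubles the leading $\tfrac{1}{4}\|y\|_{\mcH^2}^2$ to exactly $\tfrac{1}{2}\|y\|_{\mcH^2}^2$, giving \eqref{STMHD_prel_eq_B_L2l2H1}.

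The delicate point, and the source of the numerical bound in Assumption \ref{STMHD_prel_itm_ass_H2}, is the constant bookkeeping in the last step: the factor $36=4\cdot 3^2$ is tuned so that $2\cdot 9\cdot \|\Sigma\|_{\ell^2}^2\leq 2\cdot 9/36=\tfrac{1}{2}$, where the $2$ is the Young factor from $\eps=1$, the $9$ is the Bessel-potential factor from Lemma \ref{STMHD_thm_Besselpot_est}, and $\tfrac{1}{2}$ is the target coefficient. Everything else is either absorbed into $C_{T,H,\Sigma}\|y\|_{\mcH^1}^2$ or collected into $C\|F_H(t)\|_{L^1}$.
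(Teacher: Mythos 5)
Your proposal is correct and follows essentially the same route as the paper's proof: expand the Hilbert--Schmidt norm over the basis \eqref{STMHD_eq_ONB_ell2}, use the $\mcL^2$-boundedness of $\mcP$ and its commutation with derivatives, apply Cauchy--Schwarz in $k$ together with Assumptions \ref{STMHD_prel_itm_ass_H2}--\ref{STMHD_prel_itm_ass_H3}, and for \eqref{STMHD_prel_eq_B_L2l2H1} isolate the leading term $(\Sigma_{k}\cdot\nabla)\partial_{x^{j}}y$ so that the Young factor $2$, the factor $3^{2}$ from Lemma \ref{STMHD_thm_Besselpot_est}, and the bound $\|\Sigma\|_{\ell^{2}}^{2}\leq\tfrac{1}{36}$ combine to give exactly $\tfrac{1}{2}\|y\|_{\mcH^{2}}^{2}$. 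The only cosmetic difference is in \eqref{STMHD_prel_eq_B_L2l2H0}, where you tune Young's inequality with $\eps=17$ while the paper uses the plain factor $2$ and the cruder bound $\tfrac{2}{36}\leq\tfrac{1}{2}$; both yield the stated estimate.
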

\begin{proof}
	To calculate the Hilbert-Schmidt norm (cf. \cite[Definition B.0.5, p. 217]{LR15}), we take the orthonormal basis of $\ell^{2} \times \ell^{2}$ from Equation \eqref{STMHD_eq_ONB_ell2} and enumerate it as a set $\{ \hat{e}_{l} \}_{l \in \N}$. Then by definition, for $m=0,1$,
	\begin{align*}
		\| \mcB(t,y) \|_{L_{2}(\ell^{2} \times \ell^{2};\mcH^{m})}^{2} &= \sum_{l \in \N} \| \mcB(t,y)  \hat{e}_{l} \|_{\mcH^{m}}^{2} = \sum_{k \in \N} \left\| \mcB(t,y)  \begin{pmatrix} e_{k} \\ 0\end{pmatrix} \right\|_{\mcH^{m}}^{2} + \sum_{k \in \N} \left\| \mcB(t,y)  \begin{pmatrix} 0 \\  e_{k}\end{pmatrix} \right\|_{\mcH^{m}}^{2} \\
		&= \sum_{k \in \N} \| \mcB_{k}(t,y)  \|_{\mcH^{m}}^{2}.
	\end{align*}
The first inequality follows from Assumptions \ref{STMHD_prel_itm_ass_H2} and \ref{STMHD_prel_itm_ass_H3}:
	\begin{align*}
		&\| \mcB(t,y) \|_{L_{2}(\ell^{2} \times \ell^{2};\mcH^{0})}^{2} = \sum_{k} \int_{\mbD} |\mcB_{k}(t,x,y)|^{2} \diff x \\
&\leq 2 \int_{\mbD}  \|\Sigma(t,x)  \|_{\ell^{2}}^{2} | \nabla y |^{2} + \| H_{k}(t,x,y) \|_{\ell^{2}}^{2} \diff x \\
&\leq 2 \left( \sup_{t \in [0,T], x \in \mbD} \|\Sigma(t,x)  \|_{\ell^{2}}^{2} \right)
\int_{\mbD}   | \nabla y |^{2} + 2 C_{T,H} |y|^{2} + F_{H}(t,x) \diff x \\
&\leq \frac{1}{2}
 \| y \|_{\mcH^{1}}^{2} + (2C_{T,H}-\frac{1}{2}) \| y \|_{\mcH^{0}}^{2} + \| F_{H}(t) \|_{L^{1}(\mbD)}.
	\end{align*}
	For the second inequality, we note that
	\begin{align*}
		\| \mcB(t,y) \|_{L_{2}(\ell^{2}\times \ell^{2}; \mcH^{1})}^{2} = \| \mcB(t,y) \|_{L_{2}(\ell^{2}\times \ell^{2}; \mcH^{0})}^{2} + \| \nabla \mcB(t,y) \|_{L_{2}(\ell^{2}\times \ell^{2}; \mcH^{0})}^{2},
	\end{align*}
	and, using the commutativity of derivatives and $\mcP$ as well as the chain rule, we find
	\begin{align*}
		&\partial_{x^{j}} \mcB_{k}(t,x,y) = 	\partial_{x^{j}} \mcP\left[  (\Sigma_{k}(t,x) \cdot \nabla )y + H_{k}(t,x,y) \right] = \mcP \partial_{x^{j}}\left[  (\Sigma_{k}(t,x) \cdot \nabla )y + H_{k}(t,x,y) \right] \\
		 &= \mcP \big[  (\partial_{x^{j}} \Sigma_{k}(t,x) \cdot \nabla )y + ( \Sigma_{k}(t,x) \cdot \nabla )\partial_{x^{j}}y + \left(\partial_{x^{j}} H_{k} \right)(t,x,y) + \sum_{i=1}^{6} \partial_{y^{i}} H_{k}(t,x,y) \partial_{x^{j}} y^{i} \big].
	\end{align*}
	Therefore, employing Assumptions \ref{STMHD_prel_itm_ass_H2}, \ref{STMHD_prel_itm_ass_H3}, as well as Equation \eqref{STMHD_prel_eq_Besselpot_est}, we find
	\begin{align*}
		&\| \nabla \mcB(t,y) \|_{L_{2}(\ell^{2}\times \ell^{2}; \mcH^{0})}^{2} \leq  \sum_{k} \sum_{j=1}^{3} \int_{\mbD} 2|( \Sigma_{k}(t,x) \cdot \nabla )\partial_{x^{j}}y|^{2} + 2 \big| (\partial_{x^{j}} \Sigma_{k}(t,x) \cdot \nabla )y +  \\
		&\quad + \left(\partial_{x^{j}} H_{k} \right)(t,x,y) + \sum_{i=1}^{6} \partial_{y^{i}} H_{k}(t,x,y) \partial_{x^{j}} y^{i} \big|^{2} \diff x \\
		&\leq 2 \int_{\mbD} \sum_{i,j=1}^{3} \| \Sigma(t,x) \|_{\ell^{2}}^{2} | \partial_{x^{i}} \partial_{x^{j}}y |^{2} \diff x + 6 \int_{\mbD} \sum_{j=1}^{3} \| \partial_{x^{j}} \Sigma(t,x)\|_{\ell^{2}}^{2} |\nabla y|^{2} \\
		&\quad + \|\left(\partial_{x^{j}} H_{k} \right)(t,x,y) \|_{\ell^{2}}^{2} + 6 \sum_{i=1}^{6} \|\partial_{y^{i}} H(t,x,y) \|_{\ell^{2}}^{2} | \partial_{x^{j}} y^{i} |^{2} \diff x \\
		&\leq 2 \cdot 3^{2} \sup_{t \in [0,T], x \in \mbD} \| \Sigma(t,x) \|_{\ell^{2}}^{2} \| y \|_{\mcH^{2}}^{2} +  6 \Big( \sup_{t \in [0,T], x \in \mbD} \| \partial_{x^{j}} \Sigma(t,x)\|_{\ell^{2}}^{2} \|\nabla y\|_{\mcH^{0}}^{2}  \\
		&\quad + C_{T,H} \| y \|_{\mcH^{0}}^{2} + \| F_{H}(t) \|_{L^{1}(\mbD)} + C_{T,H} \| \nabla y \|_{\mcH^{0}}^{2}  \Big) 		\\
		&\leq  \frac{1}{2}\| y \|_{\mcH^{2}}^{2} + C_{T,H,\Sigma} \left( \| \nabla y \|_{\mcH^{0}}^{2} + \| y \|_{\mcH^{0}}^{2} + \| F_{H}(t) \|_{L^{1}(\mbD)} \right),
	\end{align*}
	which, together with \eqref{STMHD_prel_eq_B_L2l2H0}, implies \eqref{STMHD_prel_eq_B_L2l2H1}.
\end{proof}

\subsection{A Tightness Condition}\label{STMHD_sec_tightness}

This section provides a tightness condition for the path space of the solutions to the tamed MHD equations. As the case of periodic boundary conditions can be treated in the same way, we will focus on the case of the full space $\R^{3}$.

Let $\mbD = \R^{3}$. We endow the space $\mcH_{\text{loc}}^{0}$ of locally $L^{2}$-integrable and divergence-free vector fields with the following Fr\'{e}chet metric: for $y,z \in \mcH_{\text{loc}}^{0}$
\begin{align*}
	\rho(y,z) := \sum_{m \in \N} 2^{-m} \left( \left[ \int_{|x| \leq m} |y(x) - z(x)|^{2} \diff x \right]^{1/2} \wedge 1 \right).
\end{align*}
Then the space $(\mcH_{\text{loc}}^{0}, \rho)$ is a Polish space and $\mcH^{0} \subset \mcH_{\text{loc}}^{0}$.

Now set $\mcX := C(\R_{+}; \mcH_{\text{loc}}^{0})$ and endow it with the metric
\begin{align*}
	\rho_{\mcX}(y,z) := \sum_{m=1}^{\infty} 2^{-m} \left( \sup_{t \in [0,m]} \rho(y(t), z(t)) \wedge 1 \right).
\end{align*}
We fix a complete orthonormal basis $\mcE := \{ e_{k} ~|~ k \in \N \} \subset \mcV$ of $\mcH^{1}$ such that $\spann \{ \mcE \}$ is a dense subset of $\mcH^{3}$ and - in the case $\mbD = \mbT^{3}$ - such that it is also an orthogonal basis of $\mcH^{0}$. Given $y \in \mcH^{0}, z \in \mcH^{2}$, the inner product $\langle y,z \rangle_{\mcH^{1}}$ is understood in the generalised sense:
\begin{equation}\label{STMHD_prel_eq_gen_H1_SP}
	\langle y,z \rangle_{\mcH^{1}} = \langle y, (I-\D)z \rangle_{\mcH^{0}}.
\end{equation}
This works in accordance to our choice of evolution triple
\begin{align*}
	\mcH^{2} \subset \mcH^{1} \subset \mcH^{0}.
\end{align*}
The next lemma provides conditions for a set of $\mcX$ to be relatively compact, similar to the classical theorem of Arzel\'{a} and Ascoli. This compactness condition can then be turned into a tightness condition.
\begin{lemma}[Compactness in $\mcX$]\label{STMHD_thm_tightness_lemma}
Let $K \subset \mcX$ satisfy the following conditions: for every $T > 0$
\begin{enumerate}[label=(\roman*), ref=(\roman*)]
	\item\label{STMHD_prel_itm_unif_bdd} $\sup_{y \in K} \sup_{s \in [0,T]} \| y(s) \|_{\mcH^{1}} < \infty$,
	\item\label{STMHD_prel_itm_equicont} $\lim_{\delta \rightarrow 0} \sup_{y \in K} \sup_{s,t \in [0,T], |t-s| < \delta}| \langle y(t) - y(s), e \rangle_{\mcH^{1}} | = 0$ for all $e \in \mcE$.
\end{enumerate}
Then $K$ is relatively compact in $\mcX$.
\end{lemma}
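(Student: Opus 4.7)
The plan is to execute a classical Arzel\`a--Ascoli type argument adapted to the nested Fr\'echet structure of $\rho_{\mcX}$. The metric $\rho_{\mcX}$ is built from suprema of $\rho$ over dyadic time intervals $[0,m]$, and $\rho$ in turn compares $L^2$ norms over spatial balls $\{|x|\leq m\}$. A standard diagonal argument therefore reduces the relative compactness of $K$ in $\mcX$ to showing, for every fixed $T, m > 0$, that $K$ regarded as a subset of $C([0,T]; \mcH^{0}(\{|x|\leq m\}))$ is relatively compact. In this Polish path space, Arzel\`a--Ascoli requires two things: pointwise relative compactness of the time slices $\{y(t) \colon y \in K\}$ in $\mcH^{0}(\{|x|\leq m\})$, and uniform equicontinuity.

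Pointwise relative compactness is immediate from hypothesis \ref{STMHD_prel_itm_unif_bdd} combined with Rellich--Kondrachov: the time slices are bounded in $\mcH^{1}$ and the embedding $\mcH^{1} \hookrightarrow \mcH^{0}(\{|x|\leq m\})$ is compact. The main work is therefore equicontinuity. Setting $w := y(t) - y(s)$, hypothesis \ref{STMHD_prel_itm_unif_bdd} yields $\| w \|_{\mcH^{1}} \leq 2M$ with $M := \sup_{y \in K, r \in [0,T]} \| y(r) \|_{\mcH^{1}}$. Let $P_{N}$ denote the $\mcH^{1}$-orthogonal projection onto $\spann(e_{1}, \ldots, e_{N})$, and split $w = P_{N} w + (I - P_{N}) w$. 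The low-mode piece satisfies
\begin{equation*}
    \| P_{N} w \|_{\mcH^{0}(\{|x|\leq m\})} \leq \sum_{k=1}^{N} \left| \langle w, e_{k} \rangle_{\mcH^{1}} \right| \cdot \| e_{k} \|_{\mcH^{0}(\{|x|\leq m\})},
\end{equation*}
which is controlled by hypothesis \ref{STMHD_prel_itm_equicont} once $N$ is fixed and $|t-s|$ is taken sufficiently small.

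The crux is therefore the uniform tail estimate
\begin{equation*}
    \eta_{N} := \sup \bigl\{ \| (I - P_{N}) w \|_{\mcH^{0}(\{|x|\leq m\})} \colon \| w \|_{\mcH^{1}} \leq 2M \bigr\} \longrightarrow 0 \quad \text{as } N \to \infty.
\end{equation*}
I would establish this by contradiction: any failing sequence $u_{N} := (I - P_{N}) w_{N}$ is bounded in $\mcH^{1}$ and is $\mcH^{1}$-orthogonal to $e_{1}, \ldots, e_{N}$; since $\mcE$ is a complete orthonormal basis, this forces $u_{N} \rightharpoonup 0$ weakly in $\mcH^{1}$, and compactness of the embedding upgrades this to strong convergence to zero in $\mcH^{0}(\{|x|\leq m\})$, contradicting the lower bound on $\|u_{N}\|$. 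This weak-to-strong step, coupling the completeness of $\mcE$ with Rellich--Kondrachov, is the only genuine obstacle in the proof. With the tail tamed by choosing $N$ large, uniform equicontinuity follows by a standard $\varepsilon/2$--$\varepsilon/2$ split, where $\delta > 0$ is chosen via hypothesis \ref{STMHD_prel_itm_equicont} applied to the finite family $e_{1}, \ldots, e_{N}$.
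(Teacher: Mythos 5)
Your proof is correct and is essentially the argument behind the reference the paper itself gives for this lemma (the paper offers no proof of its own, citing \cite[Lemma 2.6]{RZ09b}): a diagonal/Arzel\`a--Ascoli reduction to $C([0,T];\mcL^{2}(\{|x|\leq m\}))$, pointwise relative compactness from the uniform $\mcH^{1}$ bound plus Rellich--Kondrachov, and equicontinuity via the split into finitely many $\mcH^{1}$-modes controlled by hypothesis \ref{STMHD_prel_itm_equicont} together with a tail that vanishes uniformly by the weak-convergence/compact-embedding argument. Nothing to add.
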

\begin{proof}
A proof of this statement can be found in \cite[Lemma 2.6, pp. 221 f.]{RZ09b}.
\end{proof}
\newpage
\begin{lemma}[Tightness in $\mcX$]
	Let $(\mu_{n})_{n \in \N}$ be a family of probability measures on $(\mcX, \mcB(\mcX))$ and assume that
	\begin{enumerate}[label=(\roman*), ref=(\roman*)]
		\item\label{STMHD_prel_itm_tightness_ubdd} For all $T > 0$
		\begin{align*}
			\lim_{R \rightarrow \infty} \sup_{n \in \N} \mu_{n} \big\{  y \in \mcX ~|~ \sup_{s \in [0,T]} \| y(s) \|_{\mcH^{1}} > R \big\} = 0.
		\end{align*}
		\item\label{STMHD_prel_itm_tightness_econt} For all $e \in \mcE$ and any $\varepsilon, T >0$
		\begin{align*}
			\lim_{\delta \downarrow 0} \sup_{n \in \N} \mu_{n} \big\{ y \in \mcX ~|~ \sup_{s,t \in [0,T], |s-t| \leq \delta} | \langle y(t) - y(s), e \rangle_{\mcH^{1}} | > \varepsilon \big\} = 0.
		\end{align*}
	\end{enumerate}
	Then $(\mu_{n})_{n \in \N}$ is tight on $(\mcX, \mcB(\mcX))$.
\end{lemma}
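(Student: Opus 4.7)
My plan is to reduce tightness to the compactness criterion of Lemma \ref{STMHD_thm_tightness_lemma} via a standard exhaustion argument: given $\eta > 0$, I will construct a single subset $K_\eta \subset \mcX$ satisfying hypotheses \ref{STMHD_prel_itm_unif_bdd} and \ref{STMHD_prel_itm_equicont}, and whose complement has $\mu_n$-measure at most $\eta$ uniformly in $n$. Since relative compactness in the Polish space $\mcX$ is equivalent to closure of $\overline{K_\eta}$ being compact, taking that closure will produce the compact set required by the definition of tightness.

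The construction uses assumptions \ref{STMHD_prel_itm_tightness_ubdd} and \ref{STMHD_prel_itm_tightness_econt} with summable tolerances. For the $\mcH^1$-bound, for each $m \in \N$ choose via \ref{STMHD_prel_itm_tightness_ubdd} a radius $R_m > 0$ with
\begin{align*}
	\sup_{n \in \N} \mu_n\Bigl\{ y \in \mcX \,\Big|\, \sup_{s \in [0,m]} \| y(s) \|_{\mcH^1} > R_m \Bigr\} \leq \eta \cdot 2^{-(m+1)}.
\end{align*}
For the equicontinuity, enumerating $\mcE = \{ e_k \}_{k \in \N}$, for every triple $(m,k,p) \in \N^3$ apply \ref{STMHD_prel_itm_tightness_econt} with $T = m$, $e = e_k$, $\varepsilon = 1/p$ to pick $\delta_{m,k,p} > 0$ such that
\begin{align*}
	\sup_{n \in \N} \mu_n\Bigl\{ y \in \mcX \,\Big|\, \sup_{\substack{s,t \in [0,m] \\ |s-t| \leq \delta_{m,k,p}}} | \langle y(t) - y(s), e_k \rangle_{\mcH^1} | > \tfrac{1}{p} \Bigr\} \leq \eta \cdot 2^{-(m+k+p+2)}.
\end{align*}
Then I define $K_\eta$ to be the set of $y \in \mcX$ which, for every $m$, satisfy $\sup_{s \in [0,m]} \| y(s) \|_{\mcH^1} \leq R_m$, and, for every $(m,k,p)$, satisfy $\sup_{|s-t| \leq \delta_{m,k,p},\, s,t \in [0,m]} | \langle y(t) - y(s), e_k \rangle_{\mcH^1} | \leq 1/p$.

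By a union bound and geometric summation $\sum_m 2^{-(m+1)} + \sum_{m,k,p} 2^{-(m+k+p+2)} = \eta \cdot (1/2 + 1/2) \cdot \eta^{-1}\cdot \eta$, wait let me redo: the total mass excluded is at most $\eta \sum_m 2^{-(m+1)} + \eta \sum_{m,k,p} 2^{-(m+k+p+2)} \leq \eta/2 + \eta/2 = \eta$, so $\mu_n(K_\eta^c) \leq \eta$ uniformly in $n$. Condition \ref{STMHD_prel_itm_unif_bdd} of Lemma \ref{STMHD_thm_tightness_lemma} holds for $K_\eta$ on any $[0,T]$ (taking $m \geq T$ gives the bound $R_m$), and condition \ref{STMHD_prel_itm_equicont} also holds: for fixed $e = e_k$ and $T = m$, letting $\delta \to 0$ we can eventually take $\delta \leq \delta_{m,k,p}$ for arbitrarily large $p$, forcing the supremum over $K_\eta$ to be at most $1/p \to 0$.

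Hence $K_\eta$ is relatively compact in $\mcX$, its closure $\overline{K_\eta}$ is compact, and $\mu_n(\overline{K_\eta}) \geq 1 - \eta$ for all $n$, which is exactly the tightness condition. The only mildly delicate point is checking that the set $K_\eta$ is measurable so that $\mu_n(K_\eta^c)$ makes sense; this follows from continuity of the trajectories (reducing the suprema to countable ones over rationals) and the fact that, for $y \in \mcX = C(\R_+;\mcH^0_{\text{loc}})$, the pairing $\langle y(t), e_k \rangle_{\mcH^1} = \langle y(t), (I-\Delta)e_k \rangle_{\mcH^0}$ from \eqref{STMHD_prel_eq_gen_H1_SP} depends continuously on $t$ because $(I-\Delta)e_k$ has compact support (recall $\mcE \subset \mcV$). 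There is no real obstacle beyond careful bookkeeping of the three indices $(m,k,p)$ and $m$ alone in the two tolerance sums.
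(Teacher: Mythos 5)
Your proof is correct and is precisely the standard argument behind the paper's one-line proof (which simply defers to \cite[Lemma 2.7]{RZ09b}): remove exceptional sets from hypotheses \ref{STMHD_prel_itm_tightness_ubdd} and \ref{STMHD_prel_itm_tightness_econt} with summable tolerances indexed over $m$ and over $(m,k,p)$, check that the resulting intersection $K_\eta$ satisfies both hypotheses of Lemma \ref{STMHD_thm_tightness_lemma}, and take its closure. The intermediate arithmetic in your union bound is garbled, but the final estimate $\mu_{n}(K_\eta^{c}) \leq \eta$ is valid, since $\sum_{m,k,p \geq 1} 2^{-(m+k+p+2)} = 1/4 \leq 1/2$, and the measurability remark at the end (lower semicontinuity of $\|\cdot\|_{\mcH^{1}}$ on $\mcH^{0}_{\mathrm{loc}}$ plus continuity of $t \mapsto \langle y(t), (I-\Delta)e_{k}\rangle_{\mcH^{0}}$) is the right way to close that point.
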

\begin{proof}
	This follows in the same way as \cite[Lemma 2.7, pp. 222 f.]{RZ09b}
\end{proof}

\section{Existence and Uniqueness of Strong Solutions}\label{STMHD_sec_exuniq}
In this section, we prove the main results of this paper, namely that there exists a unique strong solution to the TMHD equations. This is done by using a Faedo-Galerkin approximation to get a weak solution and then proving pathwise uniqueness and employing the celebrated Yamada-Watanabe theorem to conclude existence of a unique strong solution.

\subsection{Weak and Strong Solutions}\label{STMHD_ssec_weakstrong}
For a metric space $\mcU$, we denote the set of all probability measures on $\mcU$ by $\mcP(\mcU)$.

\begin{definition}\label{STMHD_ex_def_WS}
	We say Equation \eqref{STMHD_eq_evol_eqn} has a \emph{weak solution} with initial law $\vt \in \mcP(\mcH^{1})$ if there exists a stochastic basis $(\O, \mcF, P, \left( \mcF_{t} \right)_{t \geq 0})$, an $\mcH^{1}$-valued, $(\mcF_{t})_{t}$-adapted process $y$ and two independent infinite sequences of independent standard $(\mcF_{t})$-Brownian motions $\left\{ \mcW^{k}(t) = \begin{pmatrix} W^{k}(t) \\ \bar{W}^{k}(t) \end{pmatrix} ~|~ t \geq 0, k \in \N \right\}$ such that
	\begin{enumerate}[label=(\roman*), ref=(\roman*)]
		\item\label{STMHD_ex_itm_WS_IC} $y(0)$ has law $\vt$ in $\mcH^{1}$;
		\item\label{STMHD_ex_itm_WS_reg} For $P$-a.e. $\o \in \O$ and every $T > 0$, $y(\cdot, \o) \in C([0,T];\mcH^{1}) \cap L^{2}([0,T];\mcH^{2})$;
		\item\label{STMHD_ex_itm_WS_eqn} it holds that in $\mcH^{0}$
		\begin{align*}
			y(t) &= y_{0} + \int_{0}^{t} [\mcA(y(t)) + \mcP f(t,y(t))]\diff t + \sum_{k=1}^{\infty} \int_{0}^{t} \mcB_{k}(t,y(t)) \diff \mcW_{t}^{k},
		\end{align*}
		for all $t \geq 0$, $P$-a.s.
	\end{enumerate}
	The solution is denoted by $(\O, \mcF, P, \left( \mcF_{t} \right)_{t \geq 0}; \mcW; y)$.
\end{definition}
Under the Assumptions \ref{STMHD_prel_itm_ass_H1}--\ref{STMHD_prel_itm_ass_H3} in this section, the above integrals are all well-defined by Equation \eqref{STMHD_prel_eq_A_bdd}.

For weak solutions, there are several notions of uniqueness. In this work, we are concerned mostly with pathwise uniqueness.
\begin{definition}
	We say that \emph{pathwise uniqueness} holds for Equation \eqref{STMHD_eq_evol_eqn} if, whenever we are given two weak solutions on the same stochastic basis with the same Brownian motion,
	\begin{align*}
		(\O, \mcF, P, (\mcF_{t})_{t \geq 0}; \mcW; y), (\O, \mcF, P, (\mcF_{t})_{t \geq 0}; \mcW; y'),
	\end{align*}
	the condition $P\{ y(0) = y'(0) \} = 1$ implies $P\{ y(t) = y'(t), \forall t \geq 0 \} = 1$.
\end{definition}

The following proposition links existence of weak solutions to existence of a solution to a corresponding martingale problem.
\begin{proposition}\label{STMHD_thm_MP}
	Let $\mcE$ and $\mcX$ be given as in Section \ref{STMHD_sec_tightness}. For $\vt \in \mcP(\mcH^{1})$, the following are equivalent:
	\begin{enumerate}[label=(\roman*), ref=(\roman*)]
		\item\label{STMHD_itm_WSchar_WS} There exists a weak solution to Equation \eqref{STMHD_eq_evol_eqn} with initial law $\vt$;
		\item\label{STMHD_itm_WSchar_MP} There exists a measure $P_{\vt} \in \mcP(\mcX)$ with the following property: for $P_{\vt}$-almost all $y \in \mcX$ and any $T > 0$, the real-valued process defined by
		\begin{equation}\label{STMHD_ex_eq_MPregularity}
			y \in L^{\infty}([0,T];\mcH^{1}) \cap L^{2}([0,T];\mcH^{2}),
		\end{equation}
		and for any $\vp \in C_{0}^{\infty}(\R)$ and any $e \in \mcE$
		\begin{align*}
			M_{e}^{\vp}(t,y) &:= \vp(\langle y(t), e \rangle_{\mcH^{1}}) - \vp(\langle y(0), e \rangle_{\mcH^{1}}) - \int_{0}^{t} \vp'(\langle y(s), e \rangle_{\mcH^{1}}) \cdot \langle \mcA(y(s)), e \rangle_{\mcH^{1}} \diff s \\
			&\quad - \int_{0}^{t} \vp'(\langle y(s), e \rangle_{\mcH^{1}}) \cdot \langle f(s,y(s)), e \rangle_{\mcH^{1}} \diff s - \frac{1}{2} \int_{0}^{t} \vp''(\langle y(s), e \rangle_{\mcH^{1}}) \cdot \| \langle \mcB(s,y(s)), e \|_{\ell^{2}} \diff s
		\end{align*}
		is a continuous local $P_{\vt}$-martingale with respect to $(\mcB_{t}(\mcX))_{t}$, where $\mcB_{t}(\mcX)$ denotes the sub $\sigma$-algebra of $\mcX$ up to time $t$.
	\end{enumerate}
\end{proposition}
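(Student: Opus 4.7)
The plan is to establish the equivalence by treating the two directions separately, with the forward direction being essentially an application of Itô's formula and the reverse direction requiring a martingale representation argument on a canonically constructed probability space.

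For the direction \ref{STMHD_itm_WSchar_WS} $\Rightarrow$ \ref{STMHD_itm_WSchar_MP}, I would start with a weak solution $(\Omega, \mcF, P, (\mcF_{t})_{t \geq 0}; \mcW; y)$ and define $P_{\vartheta}$ to be the law of $y$ on $\mcX$; the regularity requirement \eqref{STMHD_ex_eq_MPregularity} is immediate from property \ref{STMHD_ex_itm_WS_reg} of the weak solution. To show the martingale property, I would apply the finite-dimensional Itô formula to the real-valued semimartingale $t \mapsto \varphi(\langle y(t), e \rangle_{\mcH^{1}})$ for $e \in \mcE \subset \mcV \subset \mcH^{3}$, using the equation for $y$ in $\mcH^{0}$ paired against $(I-\Delta)e \in \mcH^{1}$ (by the definition \eqref{STMHD_prel_eq_gen_H1_SP} of the generalised inner product). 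Collecting the drift contributions from $\mcA(y) + \mcP f(s,y)$ and the quadratic-variation contribution $\tfrac{1}{2}\varphi''(\langle y, e\rangle_{\mcH^{1}}) \|\langle \mcB(s,y),e\rangle_{\mcH^{1}}\|_{\ell^{2}}^{2}\,ds$ from the stochastic integral against $\mcW^{k}$ precisely matches what is subtracted in the definition of $M_{e}^{\varphi}$, leaving a continuous local martingale formed from stochastic integrals.

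For the direction \ref{STMHD_itm_WSchar_MP} $\Rightarrow$ \ref{STMHD_itm_WSchar_WS}, I would work on the canonical probability space $(\mcX, \mcB(\mcX), P_{\vartheta})$ with the filtration $(\mcB_{t}(\mcX))_{t \geq 0}$ and canonical coordinate process $y(t,\omega) := \omega(t)$. The key observation is that by a standard localisation argument (stopping when $\|y(s)\|_{\mcH^{1}}$ exceeds $R$ and choosing $\varphi_{R} \in C_{0}^{\infty}(\R)$ equal to $x$ on $[-R,R]$), the martingale problem for $\varphi$ ranging over $C_{0}^{\infty}(\R)$ is equivalent to the assertion that for every $e \in \mcE$
\begin{align*}
  M_{e}(t) &:= \langle y(t),e\rangle_{\mcH^{1}} - \langle y(0),e\rangle_{\mcH^{1}} - \int_{0}^{t} \langle \mcA(y(s)) + \mcP f(s,y(s)), e \rangle_{\mcH^{1}}\,ds
\end{align*}
is a continuous local martingale, and (using $\varphi(x) = x^{2}$ together with polarisation applied to $\varphi(\langle y, e+e'\rangle_{\mcH^{1}})$) that the cross-variation is
\begin{align*}
  \langle M_{e}, M_{e'}\rangle_{t} &= \int_{0}^{t} \big\langle \langle \mcB(s,y(s)), e\rangle_{\mcH^{1}},\, \langle \mcB(s,y(s)), e'\rangle_{\mcH^{1}} \big\rangle_{\ell^{2} \times \ell^{2}} \, ds.
\end{align*}

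Since $\mcB(s,y(s))$ is Hilbert--Schmidt valued by Lemma \ref{STMHD_thm_estB}, the family $(M_{e})_{e \in \mcE}$ defines a continuous local martingale with values in the dual of $\spann\{\mcE\}$, whose quadratic variation is determined by the bilinear form above. One then invokes a standard Hilbert-space martingale representation theorem (for example \cite[Theorem 8.2]{DPZ92}), possibly after enlarging the probability space with an independent cylindrical Brownian motion to handle the degeneracy of $\mcB$, to produce a sequence $\mcW = (\mcW^{k})_{k}$ of independent Brownian motions on an enlarged stochastic basis such that $M_{e}(t) = \sum_{k} \int_{0}^{t} \langle \mcB_{k}(s,y(s)), e\rangle_{\mcH^{1}}\,d\mcW_{s}^{k}$. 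Testing against all $e \in \mcE$ then gives the identity \ref{STMHD_ex_itm_WS_eqn} in $\mcH^{0}$ by density of $\spann\{\mcE\}$ in $\mcH^{3}$, and initial-condition \ref{STMHD_ex_itm_WS_IC} together with regularity \ref{STMHD_ex_itm_WS_reg} follow from \eqref{STMHD_ex_eq_MPregularity} and the construction. The most delicate step is the martingale representation on the canonical space, since $\mcB(s,y(s))$ need not be injective and the driving noise is infinite-dimensional; this is where the possible enlargement of the probability space enters, following the classical recipe of Da Prato--Zabczyk.
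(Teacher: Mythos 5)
Your proposal is correct and takes essentially the same route as the paper, which does not spell the argument out but defers to \cite[Section 6.1, pp.\ 257 ff.]{RZ09b}: the forward implication is It\^o's formula applied to $\vp(\langle y(t),e\rangle_{\mcH^{1}})$ and a push-forward of the martingale property to $(\mcX,\mcB(\mcX),P_{\vt})$, and the converse is a localisation to recover $M_{e}$ and its quadratic variation followed by the Da Prato--Zabczyk martingale representation theorem on an enlarged basis, exactly as you describe. The one step to be careful with is the polarisation identity for $\langle M_{e},M_{e'}\rangle_{t}$, since condition \ref{STMHD_itm_WSchar_MP} asserts the martingale property only for $e\in\mcE$ and not for $e+e'\in\spann\{\mcE\}$; this extension is standard (and is how the cited reference proceeds), but it should be stated rather than used silently.
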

\begin{proof}
For a proof of this statement, cf. \cite[Section 6.1, pp. 257 ff.]{RZ09b}.
\end{proof}
To be able to define the notion of strong solutions to Equation \eqref{STMHD_eq_evol_eqn}, we need a canonical realisation of an infinite sequence of independent standard Brownian motions on a Polish space. To this end, consider the space $C(\R_{+},\R)$ of continuous functions on $\R_{+}$ with the metric
\begin{align*}
	\tilde{\rho}(w,w') := \sum_{k=1}^{\infty} 2^{-k} \left( \sup_{t \in [0,k]} | w(t) - w'(t)| \wedge 1 \right).
\end{align*}
We define the product space $\mbW := \prod\limits_{j=1}^{\infty} C(\R_{+}; \R)$ and endow it with the metric
\begin{align*}
	\rho_{\mbW}(w,w') := \sum_{j=1}^{\infty} 2^{-j} \left( \tilde{\rho}(w^{j},w'^{j}) \wedge 1 \right), \quad w = (w^{1}, w^{2}, \ldots), \quad w' = (w'^{1}, w'^{2}, \ldots).
\end{align*}
The space $(\mbW, \rho_{\mbW})$ is a Polish space. We denote the $\sigma$-algebra up to time $t$ by $\mcB_{t}(\mbW) \subset \mcB(\mbW)$ and endow $(\mbW, \mcB(\mbW))$ with the Wiener measure $\P$ such that the coordinate process
\begin{align*}
	w(t) := \left( w^{1}(t) , w^{2}(t), \ldots \right)
\end{align*}
is an infinite sequence of independent standard $(\mcB_{t}(\mbW))_{t}$-Brownian motions on the probability space $(\mbW, \mcB(\mbW), \P)$. 
To cater for the two noise terms present in the stochastic MHD equations, we take two copies of $\mbW$ and take their product $\mfW := \mbW \times \mbW$, with the metric
\begin{align*}
	\rho_{\mfW}\left( (w,\bar{w}) , (w', \bar{w}') \right) := \rho_{\mbW}(w,w') + \rho_{\mbW}(\bar{w}, \bar{w}') .
\end{align*}
Then $(\mfW, \rho_{\mfW})$ is also a Polish space. In the same way as above, we introduce the filtration $\mcB_{t}(\mfW) \subset \mcB(\mfW)$, and endow $(\mfW, \mcB(\mfW))$ with the product $\mfP := \P \otimes \P$ of the two Wiener measures. Then the coordinate process
\begin{align*}
	\mcW(t) := \begin{pmatrix} W(t) \\ \bar{W}(t) \end{pmatrix}
\end{align*}
consists of two independent infinite sequences of independent standard Brownian motions. For simplicity, in the following we will sometimes refer to the process $\mcW$ as a Brownian motion.

Now consider the space of continuous, $\mcH^{1}$-valued paths that are square-integrable in time with values in $\mcH^{2}$, $\mfB := C(\R_{+}; \mcH^{1}) \cap L_{\text{loc}}^{2}(\R_{+};\mcH^{2})$ with the metric
\begin{align*}
	\rho_{\mfB}(y,y') := \sum_{k \in \N} 2^{-k} \left( \sup_{t \in [0,k]} \| y(t) - y'(t) \|_{\mcH^{1}} + \int\limits_{0}^{k} \| y(t) - y'(t) \|_{\mcH^{2}}^{2} \diff t \right) \wedge 1.
\end{align*}
The $\sigma$-algebra up to time $t$ of this space is denoted by $\mcB_{t}(\mfB) \subset \mcB(\mfB)$. For any measure space $(S, \mcS, \lambda)$, we denote the completion of the $\sigma$-algebra $\mcS$ with respect to the measure $\lambda$ by $\bar{\mcS}^{\lambda}$.

\begin{definition}\label{STMHD_def_SS}
	Let $(\O, \mcF, P, \left( \mcF_{t} \right)_{t \geq 0}; \mcW; y)$ be a weak solution of Equation \eqref{STMHD_eq_evol_eqn} with initial distribution $\vt \in \mcP(\mcH^{1})$. If there exists a $\overline{\mcB(\mcH^{1}) \times \mcB(\mfW)}^{\vt \otimes P} / \mcB(\mfB)$-measurable function $F_{\vt} \colon \mcH^{1} \times \mfW \rightarrow \mfB$ such that
	\begin{enumerate}[label=(\roman*), ref=(\roman*)]
		\item For every $t > 0$, $F_{\vt}$ is $\hat{\mcB}_{t} / \mcB_{t}(\mfB)$-measurable, where $\hat{\mcB}_{t} := \overline{\mcB(\mcH^{1}) \times \mcB_{t}(\mfW)}^{\vt \otimes P}$,
		\item $y(\cdot) = F_{\vt}(y(0), \mcW(\cdot))$, $P$-a.s.,
	\end{enumerate}
	then we call $(\mcW,y)$ a \emph{strong solution}.
\end{definition}
\begin{remark}
	The function $F_{\vt}$ is a ``machine" that turns an initial value $y_{0} \in \mcH^{1}$ and a Brownian motion $\mcW \in \mfW$ into a solution via $y = F_{\vt}(y_{0},\mcW)$. The first property of $F_{\vt}$ is a type of adaptedness.
\end{remark}
Our next definition serves the purpose to clarify what we mean by a unique solution.
\begin{definition}\label{STMHD_def_USS}
	Equation \eqref{STMHD_eq_evol_eqn} is said to have a \emph{unique strong solution} associated to $\vt \in \mcP(\mcH^{1})$ if there exists a function $F_{\vt} \colon \mcH^{1} \times \mfW \rightarrow \mfB$ as in Definition \ref{STMHD_def_SS} such that also the following two conditions are satisfied:
	\begin{enumerate}[label=(\roman*), ref=(\roman*)]
		\item for any two independent copies of infinite sequence of independent standard Brownian motions $\{ \mcW(t) ~|~ t \geq 0 \}$ on the stochastic basis $(\O, \mcF,P, (\mcF_{t})_{t \geq 0})$ and any $\mcH^{1}$-valued, $\mcF_{0}$-measurable random variable $y_{0}$ with distribution $P \circ y_{0}^{-1} = \vt$,
		\begin{align*}
			(\O, \mcF, P, \left( \mcF_{t} \right)_{t \geq 0}; \mcW; F_{\vt}(y_{0}, \mcW(\cdot))) ~ \text{is~a~weak~solution~of~Equation~\eqref{STMHD_eq_evol_eqn}};
		\end{align*}
		\item for any weak solution $(\O, \mcF, P, \left( \mcF_{t} \right)_{t \geq 0}; \mcW; y)$ of Equation \eqref{STMHD_eq_evol_eqn} with initial law $\vt$,
		\begin{align*}
			y(\cdot) = F_{\vt}(y(0), \mcW(\cdot)), \quad P-a.s.
		\end{align*}
	\end{enumerate}
\end{definition}
The following Yamada-Watanabe theorem shows the relationship between the different definitions introduced in this section and gives a way to show existence of a unique strong solution to our equation.
\begin{theorem}[Yamada-Watanabe]
	If there exists a weak solution to Equation \eqref{STMHD_eq_evol_eqn} and pathwise uniqueness holds, then there exists a unique strong solution to Equation \eqref{STMHD_eq_evol_eqn}.
\end{theorem}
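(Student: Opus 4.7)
The plan is to follow the classical Yamada--Watanabe scheme, adapted to the present infinite-dimensional setting on the Polish spaces $\mcH^{1}$, $\mfW$ and $\mfB$. Starting from a weak solution $(\O, \mcF, P, (\mcF_{t}); \mcW; y)$ with initial law $\vt$, I would first push everything onto the canonical path space: let $\mu \in \mcP(\mcH^{1}\times \mfW\times \mfB)$ be the joint law of $(y(0), \mcW, y)$. Because $\mcW$ is a standard Brownian motion and $y(0)$ has law $\vt$, the marginal of $\mu$ on $\mcH^{1}\times \mfW$ equals $\vt\otimes \mfP$. Since all three spaces are Polish, one obtains a regular conditional probability $Q_{y_{0},w}(\diff y)$ on $\mfB$ given $(y_{0},w)$.

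Next I would build the coupling space on which pathwise uniqueness can be applied. Define the probability measure
\begin{equation*}
	\nu(\diff y_{0}, \diff w, \diff y, \diff y') := \vt(\diff y_{0})\,\mfP(\diff w)\,Q_{y_{0},w}(\diff y)\,Q_{y_{0},w}(\diff y')
\end{equation*}
on $\O^{*} := \mcH^{1}\times \mfW\times \mfB\times \mfB$, with the natural filtration $\mcF^{*}_{t}$ generated by the coordinate maps up to time $t$ (completed with $\nu$-null sets). The crucial identification step is to verify that under $\nu$ the coordinate process $w$ is still a sequence of independent standard Brownian motions with respect to $\mcF^{*}_{t}$, and that both triples $(\O^{*}, \mcF^{*}, \nu, (\mcF^{*}_{t}); w; y)$ and $(\O^{*}, \mcF^{*}, \nu, (\mcF^{*}_{t}); w; y')$ satisfy Definition \ref{STMHD_ex_def_WS} with initial law $\vt$. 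This uses in an essential way that the stochastic integrals appearing in \ref{STMHD_ex_itm_WS_eqn} are, by the standard transfer argument, the same $\nu$-a.s.\ under the conditional construction, and that the It\^o-integral regularity of Lemma \ref{STMHD_thm_estB} ensures all terms are well-defined on the canonical space.

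By assumption, pathwise uniqueness holds, so $\nu\{ y(\cdot)=y'(\cdot) \}=1$. Combining this with the product structure of $\nu$ on the last two coordinates, for $(\vt\otimes\mfP)$-almost every $(y_{0},w)$ the measure $Q_{y_{0},w}$ must be concentrated on a single point of $\mfB$. This defines, up to $(\vt\otimes\mfP)$-null sets, a map
\begin{equation*}
	F_{\vt} \colon \mcH^{1}\times \mfW \longrightarrow \mfB, \qquad Q_{y_{0},w}=\delta_{F_{\vt}(y_{0},w)}.
\end{equation*}
Standard measurable-selection arguments for regular conditional probabilities on Polish spaces (as in \cite[Section 6]{RZ09b}) then yield $\overline{\mcB(\mcH^{1})\times \mcB(\mfW)}^{\vt\otimes\mfP}/\mcB(\mfB)$-measurability of $F_{\vt}$, and adaptedness with respect to $\hat{\mcB}_{t}$ follows from the fact that $Q_{y_{0},w}$ restricted to $\mcB_{t}(\mfB)$ depends only on $w\restr{[0,t]}$ (because the weak-solution equation up to time $t$ only involves the Brownian motion up to time $t$).

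Finally, to obtain the \emph{unique} strong solution in the sense of Definition \ref{STMHD_def_USS}, I would verify the two conditions separately: (i) plug an arbitrary $\mcF_{0}$-measurable $y_{0}$ with law $\vt$ and an independent Brownian motion $\mcW$ into $F_{\vt}$ and check by the coupling argument above that the resulting process is a weak solution; (ii) given any weak solution $(\O,\mcF,P,(\mcF_{t}); \mcW; \tilde{y})$ with initial law $\vt$, form a similar coupling with the canonical strong solution $F_{\vt}(y(0),\mcW)$ on a common enlarged space and invoke pathwise uniqueness once more to conclude $\tilde{y}(\cdot)=F_{\vt}(\tilde{y}(0),\mcW(\cdot))$ $P$-a.s.

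The main obstacle I anticipate is the bookkeeping around the regular conditional probability and the adaptedness of $F_{\vt}$: one must check carefully that the canonical filtration on $\mfB$ is compatible with the filtration generated by $\mcW$ augmented by $\sigma(y_{0})$, and that the stochastic integral term $\int_{0}^{t}\mcB_{k}(s,y(s))\diff \mcW^{k}_{s}$ -- which lives in $\mcH^{0}$ and is only defined up to $P$-null sets -- transfers unambiguously to the canonical space. All other steps are routine consequences of Polish-space measure theory and the bounds in Lemmas \ref{STMHD_thm_estA}--\ref{STMHD_thm_estB}.
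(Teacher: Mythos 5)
Your proposal reconstructs the classical Yamada--Watanabe coupling argument from scratch, whereas the paper does not prove the theorem at all: it simply invokes the abstract result of R\"ockner, Schmuland and Zhang \cite[Theorem 2.1]{RSZ08}, which is precisely the infinite-dimensional formalisation of the scheme you outline (regular conditional probabilities $Q_{y_{0},w}$ on the Polish path space, the product coupling $\nu$ on $\mcH^{1}\times\mfW\times\mfB\times\mfB$, pathwise uniqueness forcing $Q_{y_{0},w}$ to be a Dirac mass, and a measurable selection defining $F_{\vt}$). So in substance you are re-deriving the cited black box rather than taking a different route; the paper's approach buys brevity and delegates the delicate measure-theoretic work, while yours makes the mechanism visible.

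That said, there is one genuine gap in your sketch, and it is exactly the point that makes \cite{RSZ08} a nontrivial paper rather than ``routine Polish-space measure theory.'' You assert that under $\nu$ the coordinate process $w$ is still an $(\mcF^{*}_{t})$-Brownian motion and that $Q_{y_{0},w}\restr{\mcB_{t}(\mfB)}$ depends only on $(y_{0}, w\restr{[0,t]})$, justifying the latter by saying the equation up to time $t$ only involves the noise up to time $t$. This is circular: the equation tells you that $y\restr{[0,t]}$ is a functional of the solution data, not that the regular conditional probability of $y\restr{[0,t]}$ given $(y_{0},w)$ is $\hat{\mcB}_{t}$-measurable. What is actually needed is a conditional independence statement -- that $\mcB_{t}(\mfB)$ and $\sigma(w(s)-w(t) : s\geq t)$ are conditionally independent given $\overline{\mcB(\mcH^{1})\times\mcB_{t}(\mfW)}^{\vt\otimes\mfP}$ under $\mu$ -- which one must prove from the adaptedness of $y$ and the independent-increment property of $\mcW$ on the original space, and then transfer through the disintegration. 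Without this lemma, neither the martingale property of $w$ under $\nu$ (needed before you may even speak of the stochastic integrals in Definition \ref{STMHD_ex_def_WS}\ref{STMHD_ex_itm_WS_eqn} on the coupling space, hence before pathwise uniqueness can be applied) nor the adaptedness requirement (i) of Definition \ref{STMHD_def_SS} is established. If you supply that conditional independence argument, the rest of your outline goes through and reproduces \cite[Theorem 2.1]{RSZ08}.
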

\begin{proof}
	This follows by applying the results of M. R\"ockner, B. Schmuland and X.C. Zhang \cite[Theorem 2.1]{RSZ08}.
\end{proof}

\subsection{Pathwise Uniqueness}\label{STMHD_ssec_uniq}
In this section we prove that pathwise uniqueness holds for the tamed MHD equations, following the ideas of \cite{RZ09b}.
\begin{theorem}[pathwise uniqueness]\label{STMHD_thm_uniq}
	Let the Assumptions \ref{STMHD_prel_itm_ass_H1}--\ref{STMHD_prel_itm_ass_H3} be satisfied. Then pathwise uniqueness holds for \eqref{STMHD_eq_evol_eqn}.
\end{theorem}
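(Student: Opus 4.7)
The plan is to apply It\^o's formula to $\|z(t)\|_{\mcH^{0}}^{2}$, where $z:=y-y'$ is the difference of two weak solutions with the same initial data and the same driving Brownian motion $\mcW$, and then to invoke a stochastic Gr\"onwall argument. Since the a priori bound from Theorem~\ref{STMHD_thm_exuniq} only yields $y,y'\in C([0,T];\mcH^{1})\cap L^{2}([0,T];\mcH^{2})$ $P$-almost surely, all estimates will be carried out up to the localising stopping times
\begin{align*}
\tau_{R}:=\inf\bigl\{t\geq 0: \|y(t)\|_{\mcH^{1}}\vee \|y'(t)\|_{\mcH^{1}}>R\bigr\},
\end{align*}
which satisfy $\tau_{R}\uparrow \infty$ $P$-a.s.\ as $R\to\infty$.

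The term-by-term estimate of $\langle \mcA(y)-\mcA(y'),z\rangle_{\mcH^{0}}$ proceeds as follows. First, the linear part contributes $-\|\nabla z\|_{\mcH^{0}}^{2}$. Second, in the nonlinear part, writing $v=v'+z_{v}$, $B=B'+z_{B}$ and using $\divv z_{v}=\divv z_{B}=0$, the standard cancellations $\langle (v\cdot\nabla)z_{v},z_{v}\rangle=0$ and the classical MHD identity $\langle (B\cdot\nabla)z_{B},z_{v}\rangle+\langle (B\cdot\nabla)z_{v},z_{B}\rangle=0$ reduce everything to quadratic-in-$z$ cross terms of the form $\langle (z_{v}\cdot\nabla)v',z_{v}\rangle$, $\langle (z_{B}\cdot\nabla)B',z_{v}\rangle$, $\langle (z_{v}\cdot\nabla)B',z_{B}\rangle$, $\langle (z_{B}\cdot\nabla)v',z_{B}\rangle$. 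Third, for the taming part, the Jacobian of the map $\eta\mapsto g_{N}(|\eta|^{2})\eta$ is $g_{N}(|\eta|^{2})\mathrm{Id}+2g_{N}'(|\eta|^{2})\eta\otimes\eta$, which is positive semi-definite since $g_{N},g_{N}'\geq 0$; hence $\eta\mapsto g_{N}(|\eta|^{2})\eta$ is monotone on $\R^{6}$ and the contribution of the taming difference to $\langle \mcA(y)-\mcA(y'),z\rangle_{\mcH^{0}}$ is non-positive. Finally, H\ref{STMHD_prel_itm_ass_H1} gives $|\mcP(f(t,y)-f(t,y'))|\leq C_{T,f}|z|$ and H\ref{STMHD_prel_itm_ass_H3} together with H\ref{STMHD_prel_itm_ass_H2} yields
\begin{align*}
\sum_{k}\|\mcB_{k}(t,y)-\mcB_{k}(t,y')\|_{\mcH^{0}}^{2}\leq \tfrac{1}{18}\|\nabla z\|_{\mcH^{0}}^{2}+C_{T,H}\|z\|_{\mcH^{0}}^{2},
\end{align*}
since $(\Sigma_{k}\cdot\nabla)z$ depends linearly on $z$ with coefficient bounded by $\|\Sigma\|_{\ell^{2}}^{2}\leq\tfrac{1}{36}$ and $H$ is uniformly Lipschitz in $y$.

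It remains to absorb the four nonlinear cross terms. Using H\"older together with the 3D Gagliardo--Nirenberg inequality $\|w\|_{L^{4}}^{2}\leq C\|w\|_{L^{2}}^{1/2}\|\nabla w\|_{L^{2}}^{3/2}$ and Young's inequality with a small parameter $\varepsilon>0$,
\begin{align*}
|\langle (z_{v}\cdot\nabla)v',z_{v}\rangle|\leq \|z_{v}\|_{L^{4}}^{2}\|\nabla v'\|_{L^{2}}\leq \varepsilon\|\nabla z\|_{\mcH^{0}}^{2}+C_{\varepsilon}\|z\|_{\mcH^{0}}^{2}\|\nabla y'\|_{\mcH^{0}}^{4},
\end{align*}
and analogously for the remaining three cross terms (with $\|\nabla y'\|$ or $\|\nabla y\|$). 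Choosing $\varepsilon$ small enough so that $2-\tfrac{1}{18}-4\varepsilon>0$, all gradient terms are absorbed into the $-2\|\nabla z\|_{\mcH^{0}}^{2}$ coming from It\^o's formula. Collecting everything yields
\begin{align*}
d\|z(t)\|_{\mcH^{0}}^{2}\leq \phi(t)\|z(t)\|_{\mcH^{0}}^{2}\,dt+dM_{t},\qquad \phi(t):=C\bigl(1+\|\nabla y(t)\|_{\mcH^{0}}^{4}+\|\nabla y'(t)\|_{\mcH^{0}}^{4}\bigr),
\end{align*}
where $M$ is a continuous local martingale with $M_{0}=0$. On $[0,t\wedge\tau_{R}]$, $\phi$ is bounded by $C(1+R^{4})$, so a standard (stochastic) Gr\"onwall argument, combined with $z(0)=0$, gives $E[\|z(t\wedge\tau_{R})\|_{\mcH^{0}}^{2}]=0$ for every $t,R$; letting $R\to\infty$ concludes. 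The main obstacle is the careful bookkeeping of the four surviving nonlinear cross terms together with the precise numerical constant in H\ref{STMHD_prel_itm_ass_H2}, since one must leave enough margin in the dissipative $-\|\nabla z\|_{\mcH^{0}}^{2}$ to swallow both the noise correction $\tfrac{1}{18}\|\nabla z\|_{\mcH^{0}}^{2}$ and all four $\varepsilon\|\nabla z\|_{\mcH^{0}}^{2}$ contributions.
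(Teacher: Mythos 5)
Your proposal is correct and follows the same overall architecture as the paper's proof (It\^o's formula for $\|z\|_{\mcH^{0}}^{2}$, localisation by $\mcH^{1}$-stopping times, term-by-term absorption of the gradient contributions, Gronwall, then Fatou as $R\to\infty$), but two sub-steps are handled by genuinely different means. For the nonlinear drift difference, the paper integrates by parts onto $\nabla z$ and, after Young's inequality, estimates tensor-product differences $\|\bm{\theta}_{1}\otimes\bm{\psi}_{1}-\bm{\theta}_{2}\otimes\bm{\psi}_{2}\|_{\mbL^{2}}^{2}$ via $L^{4}$-interpolation; you instead exploit the cancellations $\langle (\bm{v}\cdot\nabla)z_{v},z_{v}\rangle=0$ and $\langle(\bm{B}\cdot\nabla)z_{B},z_{v}\rangle+\langle(\bm{B}\cdot\nabla)z_{v},z_{B}\rangle=0$ to reduce to the four quadratic cross terms, which you then control by the same Gagliardo--Nirenberg/Young interpolation. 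The outcomes are equivalent (both ultimately need the stopping time to bound $\|\nabla y\|^{4}+\|\nabla y'\|^{4}$ or $R^{2}$-type factors), so this is a matter of taste. For the taming term your argument is cleaner than the paper's: you observe that $\eta\mapsto g_{N}(|\eta|^{2})\eta$ has positive semi-definite Jacobian $g_{N}\,\mathrm{Id}+2g_{N}'\,\eta\otimes\eta$ and is therefore monotone, so its contribution is simply non-positive and can be dropped; the paper instead uses the Lipschitz bound $|g_{N}(r)-g_{N}(r')|\leq 2|r-r'|$ followed by $L^{4}$-interpolation and the radius $R$, which costs an extra $\tfrac14\|\nabla z\|^{2}$ of the dissipation budget. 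Your bookkeeping of the noise correction ($\tfrac{1}{18}\|\nabla z\|_{\mcH^{0}}^{2}$ from $\|\Sigma\|_{\ell^{2}}^{2}\leq\tfrac{1}{36}$ and the splitting of the square) is consistent with Assumption (H2) and leaves ample dissipative margin, so the absorption step closes.
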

\begin{proof}
	Let $y_{1}, y_{2}$ belong to two weak solutions defined on the same stochastic basis $(\O, \mcF, P, \left( \mcF_{t} \right)_{t \geq 0})$ and with the same Brownian motion $\mcW$ and same initial condition $y_{0}$. Let $T > 0$, $R > 0$ and define the stopping time
	\begin{align*}
		\tau_{R} := \inf \{ t \in [0,T] ~|~ \| y_{1}(t) \|_{\mcH^{1}} \vee \| y_{2}(t) \|_{\mcH^{1}} \geq R \} \wedge T.
	\end{align*}
	By Assumption \ref{STMHD_ex_itm_WS_reg} of the definition of a weak solution, we know that for $P$-a.e. $\o$ $y_{i}(\cdot, \o) \in C([0,T];\mcH^{1})$, $i=1,2$, and thus
	\begin{align*}
		\tau_{R} \uparrow T, \quad \mathrm{as~}R \rightarrow \infty, \quad P-a.s.
	\end{align*}
	So $\tau_{R}$ is a localising sequence. Now we set $z(t) := y_{1}(t) - y_{2}(t)$. $z$ satisfies the equation
	\begin{align*}
		\diff z(t) &= \left[ \mcA(y_{1}(t)) - \mcA(y_{2}(t)) + \mcP\left( f(t,y_{1}(t)) - f(t,y_{2}(t)) \right) \right] \diff t \\
		&\quad + \sum_{k=1}^{\infty} \left( \mcB_{k}(t,y_{1}(t)) - \mcB_{k}(t,y_{2}(t)) \diff \mcW_{t}^{k} \right), \\
		z(0) &= 0.
	\end{align*}
	Thus by It\^{o}'s formula for $\| z \|_{\mcH^{0}}^{2}$, we find (noting the self-adjointness of the projection $\mcP$ with respect to $\langle \cdot, \cdot \rangle_{\mcH^{0}} = \langle \cdot, \cdot \rangle_{\mcL^{2}}$)
	\begin{equation}\label{STMHD_ex_eq_pwu_Ito}	
		\begin{split}
	 	\| z(t) \|_{\mcH^{0}}^{2} &= 2 \int_{0}^{t} \langle \mcA(y_{1}(s)) - \mcA(y_{2}(s)), z(s) \rangle_{\mcH^{0}} \diff s + 2 \int_{0}^{t} \langle f(s,y_{1}(s)) - f(s,y_{2}(s)), z(s) \rangle_{\mcH^{0}} \diff s \\
	 	&\quad + 2 \sum_{k=1}^{\infty} \int_{0}^{t} \langle \mcB_{k}(s,y_{1}(s)) - \mcB_{k}(s,y_{2}(s)), z(s) \rangle_{\mcH^{0}} \diff \mcW_{s}^{k} \\
	 	&\quad + \sum_{k=1}^{\infty} \int_{0}^{t} \| \mcB_{k}(s,y_{1}(s)) - \mcB_{k}(s,y_{2}(s)) \|_{\mcH^{0}}^{2} \diff s \\
	 	&=: I_{1}(t) + I_{2}(t) + I_{3}(t) + I_{4}(t).
	 	\end{split} 
	\end{equation}
	We stop with $\tau_{R}$ and estimate each term separately.
	
	The first term $I_{1}(t \wedge \tau_{R})$ can be rewritten and estimated using integration by parts and Young's inequality:
	\begin{align*}
		&I_{1}(t \wedge \tau_{R}) \leq - \int_{0}^{t \wedge \tau_{R}} \| \nabla z(s) \|_{\mcH^{0}}^{2} \diff s \\
		&\quad + 2 \int_{0}^{t \wedge \tau_{R}} \Big[ \| \bm{v}_{1} \otimes \bm{v}_{1} - \bm{v}_{2} \otimes \bm{v}_{2} \|_{\mbL^{2}}^{2}  + \| \bm{B}_{1} \otimes \bm{B}_{1} - \bm{B}_{2} \otimes \bm{B}_{2}  \|_{\mbL^{2}}^{2} \\
		&\quad \quad + \| \bm{v}_{1} \otimes \bm{B}_{1} - \bm{v}_{2} \otimes \bm{B}_{2} \|_{\mbL^{2}}^{2} + \| \bm{B}_{1} \otimes \bm{v}_{1} - \bm{B}_{2} \otimes \bm{v}_{2} \|_{\mbL^{2}}^{2} \Big]\diff s \\
	 &\quad - 2 \int \limits_{0}^{t \wedge \tau_{R}} \left\langle g_{N}(y_{1})y_{1} - g_{N}(y_{2})y_{2} , z\right\rangle_{\mcL^{2}} \diff s =: J_{1}(t \wedge \tau_{R}) + J_{2}(t \wedge \tau_{R}) + J_{3}(t \wedge \tau_{R}).
	\end{align*}
The terms of $J_{2}(t \wedge \tau_{R})$ are of the general form
\begin{align*}
	\| \bm{\theta}_{1} \otimes \bm{\psi}_{1} - \bm{\theta}_{2} \otimes \bm{\psi}_{2} \|_{\mbL^{2}}^{2}, \quad \bm{\theta}_{i}, \bm{\psi}_{i} \in \{ \bm{v}, \bm{B} \}, i = 1,2,
\end{align*}
and can be estimated as follows:
\begin{align*}
	&\| \bm{\theta}_{1} \otimes \bm{\psi}_{1} - \bm{\theta}_{2} \otimes \bm{\psi}_{2} \|_{\mbL^{2}}^{2} \\
	&\leq 2 \left( \| \bm{\theta}_{1} \|_{\mbL^{4}}^{2} \|\bm{\psi}_{1} - \bm{\psi}_{2} \|_{\mbL^{4}}^{2} + \| \bm{\theta}_{1} - \bm{\theta}_{2} \|_{\mbL^{4}}^{2} \| \bm{\psi}_{2} \|_{\mbL^{4}}^{2} \right) \\
	&\leq 2C_{1,4}^{2} \left( \| \bm{\theta}_{1} \|_{\mbH^{1}}^{2} \|\bm{\psi}_{1} - \bm{\psi}_{2} \|_{\mbH^{1}}^{3/2} \|\bm{\psi}_{1} - \bm{\psi}_{2} \|_{\mbH^{0}}^{1/2} + \| \bm{\theta}_{1} - \bm{\theta}_{2} \|_{\mbH^{1}}^{3/2} \| \bm{\theta}_{1} - \bm{\theta}_{2} \|_{\mbH^{0}}^{1/2} \| \bm{\psi}_{2} \|_{\mbH^{1}}^{2} \right) \\
	&\leq 2 C_{1,4}^{2} R^{2} \left( \|\bm{\psi}_{1} - \bm{\psi}_{2} \|_{\mbH^{1}}^{3/2} \|\bm{\psi}_{1} - \bm{\psi}_{2} \|_{\mbH^{0}}^{1/2} + \| \bm{\theta}_{1} - \bm{\theta}_{2} \|_{\mbH^{1}}^{3/2} \| \bm{\theta}_{1} - \bm{\theta}_{2} \|_{\mbH^{0}}^{1/2} \right) \\
	&\leq \frac{1}{16} \|\bm{\psi}_{1} - \bm{\psi}_{2} \|_{\mbH^{1}}^{2} + \frac{1}{16} \| \bm{\theta}_{1} - \bm{\theta}_{2} \|_{\mbH^{1}}^{2} + C_{R}  \left( \|\bm{\psi}_{1} - \bm{\psi}_{2} \|_{\mbH^{0}}^{2} + \|\bm{\theta}_{1} - \bm{\theta}_{2} \|_{\mbH^{0}}^{2} \right).
\end{align*}
Counting all possible combinations of $\bm{\theta}$ and $\bm{\psi}$ and combining the $\bm{v}$ and $\bm{B}$-norms into the corresponding norms for $y$, we find that
\begin{align*}
	J_{2}(t \wedge \tau_{R}) \leq \frac{1}{4} \int_{0}^{t} \| \nabla z(s) \|_{\mcH^{0}}^{2} \diff s + C_{R} \int_{0}^{t} \| z(s) \|_{\mcH^{0}}^{2} \diff s.
\end{align*}
	Since $|g_{N}(r) - g_{N}(r')| \leq 2|r - r'|$, we see, using a short calculation as well as  Sobolev embedding and Young's inequality, that
	\begin{align*}
		J_{3}(t \wedge \tau_{R}) &\leq 8 \int\limits_{0}^{t \wedge \tau_{R}} \left\| |z| \cdot \left( |y_{1}| + |y_{2}| \right) \right\|_{L^{2}}^{2} \diff s \leq 16 \int\limits_{0}^{t \wedge \tau_{R}} \| z \|_{\mcL^{4}}^{2} \left( \| y_{1} \|_{\mcL^{4}}^{2} + \| y_{2} \|_{\mcL^{4}}^{2} \right) \diff s\\
		&\leq 16 C_{1,4}^{2} \int\limits_{0}^{t \wedge \tau_{R}} \| \nabla z \|_{\mcH^{0}}^{3/2} \| z \|_{\mcH^{0}}^{1/2} \left( \| y_{1} \|_{\mcH^{1}}^{2} + \| y_{2} \|_{\mcH^{1}}^{2} \right) \diff s \\
		&\leq 16 C_{1,4}^{2} R^{2} \int\limits_{0}^{t \wedge \tau_{R}} \| \nabla z \|_{\mcH^{0}}^{3/2} \| z \|_{\mcH^{0}}^{1/2} \diff s \\
		&\leq \frac{1}{4} \int\limits_{0}^{t \wedge \tau_{R}} \| \nabla z \|_{\mcH^{0}}^{2} \diff s  + C_{R} \int\limits_{0}^{t \wedge \tau_{R}}  \| z \|_{\mcH^{0}}^{2} \diff s.
	\end{align*}
Thus, altogether we find that 
\begin{align*}
	I_{1}(t \wedge \tau_{R}) \leq - \frac{1}{2} \int\limits_{0}^{t \wedge \tau_{R}} \| \nabla z(s) \|_{\mcH^{0}}^{2} \diff s + C_{R} \int\limits_{0}^{t \wedge \tau_{R}}  \| z(s) \|_{\mcH^{0}}^{2} \diff s. 
\end{align*}
	By Cauchy-Schwarz-Buniakowski and Assumption \ref{STMHD_prel_itm_ass_H1}, we find for $I_{2}(t \wedge \tau_{R})$
	\begin{align*}
		2 \int_{0}^{t} \langle f(s,y_{1}(s)) - f(s,y_{2}(s)), z(s) \rangle_{\mcH^{0}} \diff s \leq C_{T,F} \int_{0}^{t \wedge \tau_{R}} \| z(s) \|_{\mcH^{0}}^{2} \diff s.
	\end{align*}
	The term $I_{3}(t \wedge \tau_{R})$ is a martingale and thus killed upon taking expectations.
	
	For $I_{4}(t \wedge \tau_{R})$ we have by Assumptions \ref{STMHD_prel_itm_ass_H2} and \ref{STMHD_prel_itm_ass_H3}
	\begin{align*}
		&I_{4}(t \wedge \tau_{R}) = \sum_{k=1}^{\infty} \int_{0}^{t \wedge \tau_{R}} \| \mcP \left( (\Sigma^{k} \cdot \nabla) z(s) \right) + \mcP \left( H^{k}(s, y_{1}(s)) - H^{k}(s, y_{2}(s)) \right) \|_{\mcH^{0}}^{2} \diff s \\
		&\leq  \sup_{t \in [0,T], x \in \mbD} \| \Sigma(t,x) \|_{\ell^{2}}^{2} \int_{0}^{t \wedge \tau_{R}} \| \nabla z(s) \|_{\mcH^{0}}^{2} \diff s + C_{T,H} \int_{0}^{t \wedge \tau_{R}} \| z(s) \|_{\mcH^{0}}^{2} \diff s \\
		&\leq \frac{1}{4} \int_{0}^{t \wedge \tau_{R}} \| \nabla z(s) \|_{\mcH^{0}}^{2} \diff s + C_{T,H} \int_{0}^{t \wedge \tau_{R}} \| z(s) \|_{\mcH^{0}}^{2} \diff s.
	\end{align*}
	Hence, if we stop \eqref{STMHD_ex_eq_pwu_Ito} with $\tau_{R}$ and take expectations, we find, using the previous estimates, that
	\begin{align*}
		\E \left[ \| z(t \wedge \tau_{R}) \|_{\mcH^{0}}^{2} \right] \leq C_{R,T} \E \left[ \int\limits_{0}^{t \wedge \tau_{R}}  \| z(s) \|_{\mcH^{0}}^{2} \diff s \right]\leq C_{R,T,f,H} \int\limits_{0}^{t}  \E \left[  \| z(s \wedge \tau_{R}) \|_{\mcH^{0}}^{2} \right]\diff s.
	\end{align*}
	Applying Gronwall's lemma yields that for any $t \in [0,T]$
	\begin{align*}
		\E \left[ \| z(t \wedge \tau_{R}) \|_{\mcH^{0}}^{2} \right] = 0. 
	\end{align*}
	Finally, we employ Fatou's lemma to find
	\begin{align*}
		\E \left[ \| z(t) \|_{\mcH^{0}}^{2} \right] \leq \liminf_{R \rightarrow \infty} \E \left[ \| z(t \wedge \tau_{R}) \|_{\mcH^{0}}^{2} \right] = 0. 
	\end{align*}
	Thus, $z(t) = 0$ for all $t \geq 0$, $P$-a.s., i.e., pathwise uniqueness holds.
\end{proof}
\subsection{Existence of Martingale Solutions}\label{STMHD_ssec_exweak}
The main result of this section is the following:
\begin{theorem}\label{STMHD_thm_ex}
	Under the Assumptions \ref{STMHD_prel_itm_ass_H1}--\ref{STMHD_prel_itm_ass_H3}, for any initial law $\vt \in \mcP(\mcH^{1})$, there exists a weak solution for Equation \eqref{STMHD_eq_evol_eqn} in the sense of Definition \ref{STMHD_ex_def_WS}.
\end{theorem}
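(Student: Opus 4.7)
The plan is to construct a weak solution via a Faedo-Galerkin scheme, establish tightness of the laws by means of the criterion of Section \ref{STMHD_sec_tightness}, apply a Skorokhod-type coupling to obtain almost-sure convergence on a new probability space, and finally identify the limit through the martingale formulation of Proposition \ref{STMHD_thm_MP}.

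\textbf{Galerkin scheme and a priori estimates.} First I would fix the orthogonal projection $\Pi_{n}$ of $\mcH^{1}$ onto $\spann\{e_{1},\dots,e_{n}\}$ and study the finite-dimensional It\^o SDE
\begin{align*}
	dy_{n}(t) &= \Pi_{n}\bigl[\mcA(y_{n}(t)) + \mcP f(t,y_{n}(t))\bigr]\,dt + \sum_{k=1}^{n}\Pi_{n}\mcB_{k}(t,y_{n}(t))\,d\mcW_{t}^{k}, \\
	y_{n}(0) &= \Pi_{n} y_{0},
\end{align*}
where $y_{0}$ is an $\mcF_{0}$-measurable random variable with law $\vt$. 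Local existence of a unique strong solution is immediate, since the coefficients are locally Lipschitz on the finite-dimensional subspace; to exclude blow-up I apply It\^o's formula to $\|y_{n}(t)\|_{\mcH^{1}}^{2}$ and exploit the crucial cancellation between the dissipative term $-\|y_{n}\|_{\mcH^{2}}^{2}$ coming (with factor $2$) from \eqref{STMHD_prel_eq_A_H1_testing} and the noise contribution $\tfrac{1}{2}\|y_{n}\|_{\mcH^{2}}^{2}$ in \eqref{STMHD_prel_eq_B_L2l2H1}, which is what dictates the explicit bound in Assumption \ref{STMHD_prel_itm_ass_H2}. Gronwall's inequality and the Burkholder-Davis-Gundy inequality for the supremum term then yield the estimate \eqref{STMHD_ex_eq_aprioriNbound} uniformly in $n$.

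\textbf{Tightness and Skorokhod coupling.} The uniform $\mcH^{1}$-bound immediately produces condition \ref{STMHD_prel_itm_tightness_ubdd} of the tightness lemma. For the temporal equi-continuity \ref{STMHD_prel_itm_tightness_econt} I rewrite $\langle y_{n}(t)-y_{n}(s),e\rangle_{\mcH^{1}}$ as a sum of a drift and a stochastic integral, bound the former using \eqref{STMHD_prel_eq_A_L3_est} with $\tilde{y}=e\in\mcE\subset\mcV$ and the latter via Burkholder-Davis-Gundy combined with \eqref{STMHD_prel_eq_B_H1_est}; interpolation between $\mcH^{1}$ and $\mcL^{2}(\mcO)$ plus Markov's inequality then give the required H\"older-type control. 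Considering the joint laws of $(y_{n},\mcW)$ on $\mcX\times\mfW$ and applying Prokhorov together with Skorokhod's representation theorem delivers a new probability space carrying a subsequence $(\tilde y_{n},\tilde{\mcW}_{n})$ with the original joint laws, converging $\tilde P$-a.s. to some $(\tilde y,\tilde{\mcW})$; a standard L\'evy-characterisation argument confirms that $\tilde{\mcW}$ is again a cylindrical Brownian motion with respect to the natural filtration generated by $(\tilde y,\tilde{\mcW})$.

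\textbf{Passage to the limit.} The main obstacle is to identify $(\tilde y,\tilde{\mcW})$ as a weak solution of \eqref{STMHD_eq_evol_eqn}. By lower semicontinuity of norms and the uniform moment bounds of Step~1 the a priori estimates transfer to $\tilde y$, securing the regularity \eqref{STMHD_ex_eq_MPregularity}. The genuinely delicate step is passing to the limit in the nonlinear drift $\mcA_{2}$, the cubic taming term $\mcA_{3}$ and the transport noise: here I exploit that convergence in $\mcX=C(\R_{+};\mcH^{0}_{\mathrm{loc}})$, combined with the uniform $L^{2}_{t}\mcH^{2}$-bound, yields via an Aubin-Lions interpolation strong $L^{2}_{\mathrm{loc}}(\R_{+};\mcH^{1-\eps}_{\mathrm{loc}})$-convergence, which together with the local stability estimate \eqref{STMHD_prel_eq_stability_est} applied with test functions $e\in\mcE$ identifies the limits of all nonlinear terms. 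The uniform moment bounds supply the Vitali-type uniform integrability needed to pass to the limit in the stochastic integrals, the transport part being handled via \eqref{STMHD_prel_eq_B_H1_est}. Verifying in this fashion the martingale problem of Proposition \ref{STMHD_thm_MP} for the limit measure $P_{\vt}:=\tilde P\circ\tilde y^{\,-1}$ then yields, by the equivalence proved there, the sought weak solution with initial distribution $\vt$.
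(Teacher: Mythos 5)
Your proposal follows essentially the same route as the paper: Faedo--Galerkin approximation, uniform \emph{a priori} estimates via It\^{o}'s formula exploiting exactly the cancellation between \eqref{STMHD_prel_eq_A_H1_testing} and \eqref{STMHD_prel_eq_B_L2l2H1}, tightness on $\mcX$, Skorokhod coupling, and identification of the limit through the martingale problem of Proposition \ref{STMHD_thm_MP}. The only cosmetic differences are that the paper couples only the laws of $y_{n}$ (the martingale-problem equivalence makes reconstructing the driving Brownian motion unnecessary) and that your Aubin--Lions step is superfluous, since a.s. convergence in $\mcX$ already gives the local $\mcL^{2}$ convergence that the stability estimate \eqref{STMHD_prel_eq_stability_est} requires.
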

The proof proceeds in the usual fashion, analogously to the proof of \cite[Theorem 3.8]{RZ09b} by considering Faedo-Galerkin approximations of our equation and will be carried out at the end of this section. To be more precise, fix a stochastic basis $(\O, \mcF, P, \left( \mcF_{t} \right)_{t \geq 0})$ and two independent infinite sequences of independent standard $\left( \mcF_{t} \right)_{t}$-Brownian motions $\{ \mcW^{k}(t) ~|~ t \geq 0, k \in \N \}$ and an $\mcF_{0}$-measurable random variable $y_{0}$ with initial law $\vt \in \mcP(\mcH^{1})$.

The set $\mcE = \{ e_{i} ~|~ i \in \N \} \subset \mcV$ was chosen as a complete orthonormal basis of $\mcH^{1}$. We consider the finite-dimensional subspaces
\begin{align*}
	\mcH_{n}^{1} := \spann \{ e_{i} ~|~ i = 1,\ldots,n \}
\end{align*}
and consider the projections onto $\mcH_{n}^{1}$, i.e. for $y \in \mcH^{0}$, we define (recalling our convention \eqref{STMHD_prel_eq_gen_H1_SP})
\begin{align*}
	\Pi_{n}y := \sum_{i=1}^{n} \langle y , e_{i} \rangle_{\mcH^{1}} e_{i} = \sum_{i=1}^{n} \langle y , (I - \D)e_{i} \rangle_{\mcH^{0}} e_{i}.
\end{align*}
We want to study the following finite-dimensional stochastic ordinary differential equations in $\mcH_{n}^{1}$ as approximations for our infinite-dimensional equation:
\begin{align*}
	\begin{cases}
		\diff y_{n}(t) &= [\Pi_{n}\mcA(y_{n}(t)) + \Pi_{n}f(t, y_{n}(t))] \diff t + \sum_{k} \Pi_{n}\mcB_{k}(t,y_{n}(t)) \diff \mcW_{t}^{k}, \\
		y_{n}(0) &= \Pi_{n} y_{0}.
	\end{cases}
\end{align*}
Using the Lemmas \ref{STMHD_thm_estAB} and \ref{STMHD_thm_estB}, there exists a constant $C_{n,N}$ such that for any $y \in \mcH_{n}^{1}$ the following growth conditions holds:
\begin{align*}
	\langle y , \Pi_{n}\mcA(y) + \Pi_{n} f(t,y) \rangle_{\mcH^{1}_{n}} &\leq C_{n,N} \left( \| y \|_{\mcH^{1}_{n}}^{2} + 1 \right), \\
	\| \Pi_{n} \mcB(t,y) \|_{L_{2}(\ell^{2} \times \ell^{2}; \mcH^{1}_{n})} &\leq C_{n,N} \left( \| y \|_{\mcH^{1}_{n}}^{2} + 1 \right).
\end{align*}

Therefore, we can employ the theory of stochastic (ordinary) differential equations (cf. \cite[Theorem 3.1.1, p. 56]{LR15}) to find a unique $(\mcF_{t})_{t}$-adapted process $y_{n}(t)$ such that $P$-a.s. for all $t \in [0,T]$ 
\begin{equation}\label{STMHD_ex_SDE}
	\begin{split}
	y_{n}(t) = y_{n}(0) + \int_{0}^{t} \Pi_{n} \mcA(y_{n}(s)) \diff s + \int_{0}^{t} \Pi_{n} f(s,y_{n}(s)) \diff s  + \sum_{k=1}^{\infty} \int_{0}^{t} \Pi_{n} \mcB_{k}(s,y_{n}(s)) \diff \mcW_{s}^{k},
	\end{split}
\end{equation}
and for any $i \leq n$
\begin{align}
	\nonumber \left\langle y_{n}(t), e_{i} \right\rangle_{\mcH^{1}} &= \left\langle y_{n}(0), e_{i} \right\rangle_{\mcH^{1}} + \int\limits_{0}^{t} \langle \mcA(y_{n}(s)), e_{i} \rangle_{\mcH^{1}} \diff s + \int\limits_{0}^{t} \left\langle f(s,y_{n}(s)), e_{i} \right\rangle_{\mcH^{1}} \diff s \\
	\label{STMHD_ex_SDE_component} &\quad + \sum_{k=1}^{\infty} \int\limits_{0}^{t} \left\langle \mcB_{k}(s,y_{n}(s)), e_{i} \right\rangle_{\mcH^{1}} \diff \mcW_{s}^{k}.
\end{align}
Our strategy now is as follows:
\begin{enumerate}[label=\arabic*., ref=\arabic*.]
	\item Prove uniform \emph{a priori} estimates for $y_{n}$.
	\item Use these to prove tightness of the associated laws.
	\item Use Skorokhod's embedding theorem to translate the weak convergence from the previous step to $P$-a.s. convergence of the random variables.
	\item Prove uniform moment estimates for the terms of the associated martingale problem. 
	\item Show convergence in probability of the martingale problems.
\end{enumerate}
We start with the \emph{a priori} estimates.
\begin{lemma}[\emph{a priori} estimates]\label{STMHD_thm_apriori} For any $T > 0$, there exists a constant $C_{T,N,f,H,y_{0}} > 0$ such that for any $n \in \N$
\begin{equation}\label{STMHD_ex_eq_apriori}
	\begin{split}
	\E \left[ \sup_{t \in [0,T]} \| y_{n}(t) \|_{\mcH^{1}}^{2} \right] + \int_{0}^{T} \E \left[ \| y_{n}(s) \|_{\mcH^{2}}^{2} \right] \diff s +  \int_{0}^{T} \E \left[ \| \nabla |y_{n}(s)|^{2} \|_{L^{2}}^{2} \right] \diff s \leq C_{T,N,f,H,y_{0}}.
	\end{split}
\end{equation}
Furthermore, in the periodic case it holds that
\begin{equation}\label{STMHD_ex_eq_apriori_periodic_L4}
	\int_{0}^{t} \E \left[ \| y_{n}(s) \|_{\mcL^{4}}^{4} \right] \diff s \leq C_{T,N,f,H,y_{0}}.
\end{equation}
\end{lemma}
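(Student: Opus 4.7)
The plan is to apply It\^o's formula to $\|y_n(t)\|_{\mcH^{1}}^{2}$ for the Galerkin SDE \eqref{STMHD_ex_SDE} and exploit the cancellations prepared in Lemmas \ref{STMHD_thm_estA} and \ref{STMHD_thm_estB}. First I would localise via stopping times
\begin{align*}
	\tau_R^n := \inf\{ t \in [0,T] ~|~ \|y_n(t)\|_{\mcH^{1}} \geq R \} \wedge T,
\end{align*}
which satisfy $\tau_R^n \uparrow T$ $P$-a.s.\ by continuity of $y_n$, and observe that, since $\mcE$ is orthonormal in $\mcH^{1}$ and $y_n \in \mcH_n^{1}$, the projection $\Pi_n$ is self-adjoint in the $\mcH^{1}$ inner product and satisfies $\langle \Pi_n Z, y_n \rangle_{\mcH^{1}} = \langle Z, y_n \rangle_{\mcH^{1}}$, so the Galerkin testing reduces to the unprojected estimates \eqref{STMHD_prel_eq_A_H1_testing} and \eqref{STMHD_prel_eq_B_L2l2H1}.

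The heart of the argument is the following combination: the drift term $2\langle \mcA(y_n), y_n \rangle_{\mcH^{1}}$ contributes $-\|y_n\|_{\mcH^{2}}^{2}$ together with two copies of the quartic damping terms $\||\bm{v}_n||\nabla \bm{v}_n|\|_{L^{2}}^{2}$, $\||\bm{B}_n||\nabla \bm{B}_n|\|_{L^{2}}^{2}$ etc.\ with negative sign, while the It\^o correction $\|\Pi_n \mcB(s,y_n)\|_{L_2(\ell^{2}\times \ell^{2}; \mcH^{1})}^{2}$ contributes at most $\tfrac{1}{2}\|y_n\|_{\mcH^{2}}^{2}$ plus lower-order terms. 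Adding these, a strictly negative $-\tfrac{1}{2}\|y_n\|_{\mcH^{2}}^{2}$ remains, yielding the $\mcH^{2}$-integral in \eqref{STMHD_ex_eq_apriori}; the elementary pointwise inequality $|\nabla|y|^{2}|^{2} \leq 8(|\bm{v}|^{2}|\nabla \bm{v}|^{2} + |\bm{B}|^{2}|\nabla \bm{B}|^{2})$ then extracts $\|\nabla|y_n|^{2}\|_{L^{2}}^{2}$ from the damping terms. The force contribution is dominated via Cauchy--Schwarz and Assumption \ref{STMHD_prel_itm_ass_H1} by $C\|y_n\|_{\mcH^{1}}^{2} + \|F_f(s)\|_{L^{1}(\mbD)}$. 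After a Burkholder--Davis--Gundy estimate on the martingale term, whose bracket I would bound by $C\int\|y_n\|_{\mcH^{1}}^{2}\|\mcB(s,y_n)\|_{L_2}^{2}\diff s$, Young's inequality absorbs $\tfrac{1}{2}\,\E \sup_{t \leq T}\|y_n(t \wedge \tau_R^n)\|_{\mcH^{1}}^{2}$ into the left-hand side; Gronwall's lemma then gives a bound uniform in $n$ and $R$, and Fatou's lemma as $R \uparrow \infty$ delivers \eqref{STMHD_ex_eq_apriori}.

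For the periodic $\mcL^{4}$-estimate \eqref{STMHD_ex_eq_apriori_periodic_L4}, I would instead apply It\^o to $\|y_n(t)\|_{\mcH^{0}}^{2}$ and use \eqref{STMHD_prel_eq_A_testing1}, which leaves the genuinely useful non-positive term $-2\|\sqrt{g_N(|y_n|^{2})}\,|y_n|\|_{\mcL^{2}}^{2}$. The piecewise definition \eqref{DTMHD_eq_def_g_N} of $g_N$ gives $g_N(r)\,r \geq C_{\text{taming}}(r^{2} - (N+\tfrac{1}{2})r)$ for $r \geq N+1$, so together with the trivial bound $\int_{\{|y|^{2} \leq N+1\}} |y|^{4}\diff x \leq (N+1)\|y\|_{\mcL^{2}}^{2}$ one obtains $\|y_n\|_{\mcL^{4}}^{4} \leq C\|\sqrt{g_N(|y_n|^{2})}\,|y_n|\|_{\mcL^{2}}^{2} + C_N\|y_n\|_{\mcL^{2}}^{2}$, and on $\mbT^{3}$ the latter term is controlled by $\|y_n\|_{\mcH^{1}}^{2}$, already bounded via \eqref{STMHD_ex_eq_apriori}. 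The main obstacle throughout is the careful bookkeeping of numerical constants so that the net coefficient of $\|y_n\|_{\mcH^{2}}^{2}$ after combining drift and It\^o correction is strictly negative; this is precisely where the smallness assumption $\|\Sigma\|_{\ell^{2}}^{2} \leq 1/36$ in \ref{STMHD_prel_itm_ass_H2} becomes indispensable, as highlighted in the remark following the hypotheses.
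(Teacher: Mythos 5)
Your proposal is correct and follows essentially the same route as the paper's proof: It\^{o}'s formula for $\| y_{n} \|_{\mcH^{1}}^{2}$, the cancellation between the drift bound \eqref{STMHD_prel_eq_A_H1_testing} and the It\^{o} correction \eqref{STMHD_prel_eq_B_L2l2H1} leaving $-\tfrac{1}{2}\| y_{n} \|_{\mcH^{2}}^{2}$, then BDG plus Young to exchange supremum and expectation, and a separate $\mcH^{0}$-level It\^{o} argument with \eqref{STMHD_prel_eq_A_testing1}/\eqref{STMHD_prel_eq_A_testing2} for the periodic $\mcL^{4}$ bound; you in fact supply more detail (localisation, the pointwise bound extracting $\| \nabla |y_{n}|^{2} \|_{L^{2}}^{2}$, the explicit lower bound on $g_{N}(r)r$) than the paper does. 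The only point worth making explicit is that the $\mcH^{0}$-It\^{o} step for the Galerkin system is legitimate precisely because $\mcE$ was chosen orthogonal in $\mcH^{0}$ on the torus, so that $\langle \Pi_{n}\mcA(y_{n}), y_{n} \rangle_{\mcH^{0}} = \langle \mcA(y_{n}), y_{n} \rangle_{\mcH^{0}}$ — the same fact the paper invokes.
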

\begin{proof}
    The proof of this proceeds in a standard way: we use It\^{o}'s formula for $\| y_{n}(t) \|_{\mcH^{1}}^{2}$ to find
	\begin{equation}\label{Ito}
		 \begin{split}
            \| y_{n}(t) \|_{\mcH^{1}}^{2} &= \| y_{0} \|_{\mcH^{1}}^{2} + 2 \int_{0}^{t} \langle \mcA(y_{n}(s)), y_{n}(s) \rangle_{\mcH^{1}} \diff s + 2 \int_{0}^{t} \langle f(s,y_{n}(s)), y_{n}(s) \rangle_{\mcH^{1}} \diff s \\
            &\quad + M(t) + \int_{0}^{t} \| \mcB(s,y_{n}(s)) \|_{L_{2}(\ell^{2}; \mcH^{1})}^{2} \diff s
		 \end{split}
	\end{equation}
	where the term $M(t)$ is a continuous martingale and has the representation
	\begin{align*}
		M(t) := 2 \sum_{k=1}^{\infty} \langle \mcB_{k}(s,y_{n}(s)), y_{n}(s) \rangle_{\mcH^{1}} \diff \mcW_{s}^{k}.
	\end{align*}
	Then we use Lemmas \ref{STMHD_thm_estA} and \ref{STMHD_thm_estB} to estimate the terms on the right-hand side, take expectation and apply Young's inequality and Gronwall's lemma to arrive at an inequality of the form 
	\begin{equation}\label{STMHD_ex_eq_apriori_supE}
		\begin{split}		
		&\sup_{t \in [0,T]} \E \| y_{n}(t) \|_{\mcH^{1}}^{2} + \int_{0}^{T} \| y_{n}(s) \|_{\mcH^{2}}^{2} \diff s + \int_{0}^{T} \E \Big[ \| |\bm{v}_{n}| \cdot |\nabla \bm{v}_{n} | \|_{L^{2}}^{2}  \\
		& \quad  + \| |\bm{B}_{n}| \cdot |\nabla \bm{B}_{n} | \|_{L^{2}}^{2} + \| |\bm{v}_{n}| \cdot |\nabla \bm{B}_{n} | \|_{L^{2}}^{2} + \| |\bm{B}_{n}| \cdot |\nabla \bm{v}_{n} | \|_{L^{2}}^{2} \Big] \diff s \\
		&\leq C_{T,N,H,f,y_{0}}.
		\end{split}
	\end{equation}
	To ``exchange" expectation and supremum, we start again from \eqref{Ito}, take suprema first and then expectations, and then use the Burkholder-Davis-Gundy (BDG) inequality (cf. \cite{MR13}, Theorem 1.1) and Young's inequality.  Lemma \ref{STMHD_thm_estB} as well as the estimate \eqref{STMHD_ex_eq_apriori_supE} just obtained yield the claimed estimate.

	In the periodic case, as $\mcE$ is also orthogonal in $\mcH^{0}$, so we get an It\^{o} formula for the $\mcH^{0}$-norms as well and conclude taking expectations and using \eqref{STMHD_prel_eq_A_testing2} of Lemma \ref{STMHD_thm_estA} as well as Equation \eqref{STMHD_ex_eq_apriori}. Gronwall's lemma then implies \eqref{STMHD_ex_eq_apriori_periodic_L4}.
\end{proof}
Next we want to prove tightness of the laws of the solutions to the approximate equations \eqref{STMHD_ex_SDE}. We recall the notation $\mcX := C(\R_{+};\mcH^{0}_{\text{loc}})$ from Section \ref{STMHD_sec_tightness}.
\begin{lemma}[tightness]\label{STMHD_thm_tightness}
	Let $\mu_{n} := P \circ (y_{n})^{-1}$ be the law of $y_{n}$ in $(\mcX, \mcB(\mcX))$. Then the family $(\mu_{n})_{n \in \N}$ is tight on $(\mcX, \mcB(\mcX))$.
\end{lemma}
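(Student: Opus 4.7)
The plan is to verify the two hypotheses of the tightness criterion stated just above (uniform $\mcH^{1}$-boundedness with high probability, and equicontinuity in probability when tested against each basis vector $e \in \mcE$).

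Condition (i) is essentially immediate from Lemma \ref{STMHD_thm_apriori}: the uniform-in-$n$ bound on $\E\bigl[\sup_{s \in [0,T]} \|y_{n}(s)\|_{\mcH^{1}}^{2}\bigr]$ together with Chebyshev's inequality yields
\[
\sup_{n \in \N} \mu_{n}\bigl\{ y \in \mcX : \sup_{s \in [0,T]} \|y(s)\|_{\mcH^{1}} > R \bigr\} \leq \frac{C_{T,N,f,H,y_{0}}}{R^{2}} \longrightarrow 0 \quad \text{as } R \to \infty.
\]

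For condition (ii), I would fix $e \in \mcE$, which by the choice of $\mcE \subset \mcV$ has compact support in some bounded set $\mcO_{e} \subset \mbD$. Using the Galerkin identity \eqref{STMHD_ex_SDE_component}, I decompose $\langle y_{n}(t) - y_{n}(s), e \rangle_{\mcH^{1}}$ into a drift part $D_{n}(t) - D_{n}(s)$ coming from $\mcA$ and $\mcP f$, and a martingale part $M_{n}(t) - M_{n}(s)$ coming from the stochastic integral. For the drift, estimate \eqref{STMHD_prel_eq_A_L3_est} of Lemma \ref{STMHD_thm_estAB}, Assumption \ref{STMHD_prel_itm_ass_H1}, and the Sobolev embedding $\mcH^{1} \hookrightarrow L^{6}(\mcO_{e}) \hookrightarrow L^{3}(\mcO_{e})$ (available since $\mcO_{e}$ is bounded) give the pathwise bound
\[
|D_{n}(t) - D_{n}(s)| \leq C_{e,T,f} \, |t-s| \, \Bigl(1 + \sup_{r \in [0,T]} \|y_{n}(r)\|_{\mcH^{1}}^{3}\Bigr).
\]
For the martingale part, estimate \eqref{STMHD_prel_eq_B_H1_est} of Lemma \ref{STMHD_thm_estAB} combined with the Burkholder-Davis-Gundy inequality gives pointwise-in-time increment moments of the form $\E[|M_{n}(t) - M_{n}(s)|^{2p}] \leq C_{e,p,T} \, |t-s|^{p} \bigl( 1 + \E\sup_{r \in [0,T]} \|y_{n}(r)\|_{\mcH^{1}}^{2p} \bigr)$.

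To convert these pointwise increment bounds into control of the modulus of continuity required in (ii), I would partition $[0,T]$ into $\lceil T/\delta \rceil$ subintervals of length $\delta$ and apply Doob's maximal inequality piecewise, summing to obtain, for $p \geq 2$,
\[
\E\Bigl[\sup_{\substack{s,t \in [0,T] \\ |s-t| \leq \delta}} |M_{n}(t) - M_{n}(s)|^{2p}\Bigr] \leq C_{e,p,T} \, \delta^{p-1},
\]
uniformly in $n$. Combined with the (pathwise, first-order in $\delta$) control of the drift part and Chebyshev's inequality, this yields the required probabilistic equicontinuity. The main obstacle is that this argument needs higher-moment a priori bounds $\sup_{n} \E\bigl[\sup_{r \in [0,T]} \|y_{n}(r)\|_{\mcH^{1}}^{2p}\bigr] < \infty$, which are not explicitly stated in Lemma \ref{STMHD_thm_apriori}. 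These would follow by repeating the It\^{o} formula argument applied to $\|y_{n}\|_{\mcH^{1}}^{2p}$, using the coercivity estimate \eqref{STMHD_prel_eq_A_H1_testing} (whose taming contribution absorbs the dangerous cubic nonlinearity, just as in the $p=1$ case), the Hilbert--Schmidt bound \eqref{STMHD_prel_eq_B_L2l2H1}, and BDG with exponent $p$; this is a routine adaptation of the argument already carried out. Once both hypotheses of the tightness criterion are verified, the conclusion follows.
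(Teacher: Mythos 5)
Your overall architecture (verify the two hypotheses of the tightness criterion; condition (i) via Chebyshev and Lemma \ref{STMHD_thm_apriori}; condition (ii) via increment moment bounds for the drift and martingale parts of \eqref{STMHD_ex_SDE_component} upgraded to a modulus-of-continuity estimate) matches the paper's, and your partition-plus-Doob argument is essentially a hand-rolled version of the Kolmogorov--\v{C}entsov criterion that the paper invokes directly. The genuine divergence, and the gap, is in how you pay for the high powers of $\|y_{n}\|_{\mcH^{1}}$: you propose to prove $\sup_{n}\E\bigl[\sup_{r\le T}\|y_{n}(r)\|_{\mcH^{1}}^{2p}\bigr]<\infty$ for $p\ge 2$ and call this a routine adaptation of the $p=1$ case. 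It is not routine with the estimates the paper actually provides. The obstruction is not the cubic nonlinearity (which the taming does handle) but the It\^{o} correction coming from the transport noise: applying It\^{o}'s formula to $\bigl(\|y_{n}\|_{\mcH^{1}}^{2}\bigr)^{p}$ produces the term $2p(p-1)\|y_{n}\|_{\mcH^{1}}^{2(p-2)}\sum_{k}|\langle\mcB_{k}(t,y_{n}),y_{n}\rangle_{\mcH^{1}}|^{2}$, and if you estimate this with the only available bound, namely $\sum_{k}|\langle\mcB_{k},y_{n}\rangle_{\mcH^{1}}|^{2}\le\|\mcB\|_{L_{2}(\ell^{2}\times\ell^{2};\mcH^{1})}^{2}\|y_{n}\|_{\mcH^{1}}^{2}$ together with \eqref{STMHD_prel_eq_B_L2l2H1}, the coefficient of $\|y_{n}\|_{\mcH^{1}}^{2(p-1)}\|y_{n}\|_{\mcH^{2}}^{2}$ in the drift becomes $p(-1+\tfrac12)+p(p-1)=p(p-\tfrac32)$, which is strictly positive for $p\ge2$ and cannot be absorbed by the dissipation from \eqref{STMHD_prel_eq_A_H1_testing}. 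Closing the higher-moment estimate would require a separate cancellation lemma of the form $\sum_{k}|\langle(\Sigma_{k}\cdot\nabla)y,y\rangle_{\mcH^{1}}|^{2}\le C\|y\|_{\mcH^{1}}^{4}$ (obtained by integrating by parts in the transport term so that no $\mcH^{2}$-norm appears), which the paper neither states nor needs.

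The paper sidesteps all of this by localising: it introduces $\tau_{R}^{n}:=\inf\{t\ge0\,:\,\|y_{n}(t)\|_{\mcH^{1}}\ge R\}$, estimates $\E\bigl[|\langle y_{n}(t\wedge\tau_{R}^{n})-y_{n}(s\wedge\tau_{R}^{n}),e\rangle_{\mcH^{1}}|^{q}\bigr]$, and on the stopped process reduces every high power via $\|y_{n}(r)\|_{\mcH^{1}}^{q}\le R^{q-2}\|y_{n}(r)\|_{\mcH^{1}}^{2}$, so that only the second-moment a priori bound of Lemma \ref{STMHD_thm_apriori} is ever used; the event $\{\tau_{R}^{n}<T\}$ is then disposed of by \eqref{STMHD_ex_eq_tightness_H1bdd} before letting $R\to\infty$. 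You should adopt the same localisation: it makes your drift estimate and your martingale increment estimate close with the moments you actually have, and it is precisely the reason the stopping time appears in the proof. As written, your argument either needs the unproved higher-moment bounds or needs to be rerouted through $\tau_{R}^{n}$.
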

\begin{proof}
	Let $R > 0$. We set
	\begin{align*}
		\tau_{R}^{n} := \inf \{ t \geq 0 ~|~ \| y_{n}(t) \|_{\mcH^{1}_{n}} \geq R \}.
	\end{align*}
	Then, using the Chebychev inequality as well as the \emph{a priori} estimate \eqref{STMHD_ex_eq_apriori}, we find
	\begin{equation}\label{STMHD_ex_eq_tightness_H1bdd}
		\begin{split}
		\sup_{n \in \N} P(\tau_{R}^{n} < T) &= \sup_{n \in \N} P \left( \sup_{t \in [0,T]} \| y_{n}(t) \|_{\mcH^{1}_{n}} \geq R \right) \\
		&\leq \sup_{n \in \N} \frac{1}{R^{2}} \E \left[ \sup_{t \in [0,T]}  \| y_{n}(t) \|_{\mcH^{1}_{n}}^{2} \right] \leq \frac{C_{T,N,f,H,y_{0}}}{R^{2}}.
		\end{split}
	\end{equation}
	For any $q \geq 2$ and $s,t \in [0,T]$ and any $e \in \mcE$ (whose support we denote by $\mcO$), we find by using Equations \eqref{STMHD_ex_SDE_component}, \eqref{STMHD_prel_eq_A_L3_est} as well as the BDG inequality (cf. \cite{MR13}, Theorem 1.1)
	\begin{align*}
		&\E \left[ \left| \langle y_{n}(t \wedge \tau_{R}^{n}) - y_{n}(s \wedge \tau_{R}^{n}), e \rangle_{\mcH^{1}} \right|^{q} \right] \\
		&\leq 3^{q-1} \Bigg( \E \left[ \left| \int_{s \wedge \tau_{R}^{n}}^{t \wedge \tau_{R}^{n}} \langle \mcA(y_{n}(r)), e \rangle_{\mcH^{1}} \diff r \right|^{q} \right] + \E \left[ \left| \int_{s \wedge \tau_{R}^{n}}^{t \wedge \tau_{R}^{n}} \langle f(r,y_{n}(r)), e \rangle_{\mcH^{1}} \diff r \right|^{q} \right] \\
		&\quad + \E \left[\left| \sum_{k=1}^{\infty} \int_{s \wedge \tau_{R}^{n}}^{t \wedge \tau_{R}^{n}} \langle \mcB_{k}(r,y_{n}(r)), e \rangle_{\mcH^{1}} \diff \mcW_{r}^{k} \right|^{q} \right] \Bigg) \\	
		&\leq C_{e,q} \Bigg( \E \left[ \left| \int_{s \wedge \tau_{R}^{n}}^{t \wedge \tau_{R}^{n}} \left( 1 + \| y_{n} \|_{\mcL^{3}(\mcO)}^{3} \right) \diff r \right|^{q} \right] + \E \left[ \left| \int_{s \wedge \tau_{R}^{n}}^{t \wedge \tau_{R}^{n}} \| f(r,y_{n}(r)) \|_{\mcH^{0}} \diff r \right|^{q} \right] \\
		&\quad + \E \left[\left| \int_{s \wedge \tau_{R}^{n}}^{t \wedge \tau_{R}^{n}} \| \mcB(r,y_{n}(r)) \|_{L_{2}(\ell^{2}\times \ell^{2};\mcH^{0})}^{2} \diff r \right|^{q/2} \right] \Bigg),	
	\end{align*}
	where we transferred all the spatial derivatives onto $e$ via \eqref{STMHD_prel_eq_gen_H1_SP}. Applying the H\"older embedding $\mcL^{6}(\mcO) \subset \mcL^{3}(\mcO)$ as well as the Sobolev embedding $\mcH^{1} \subset \mcL^{6}$, Assumptions \ref{STMHD_prel_itm_ass_H1}, \ref{STMHD_prel_itm_ass_H3} and Equation \eqref{STMHD_prel_eq_B_L2l2H0}, we can see that
	\begin{align*}
		&\E \left[ \left| \langle y_{n}(t \wedge \tau_{R}^{n}) - y_{n}(s \wedge \tau_{R}^{n}), e \rangle_{\mcH^{1}} \right|^{q} \right] \\
		&\leq C_{e,q,T} \Bigg( \E \left[ \left| \int_{s \wedge \tau_{R}^{n}}^{t \wedge \tau_{R}^{n}} \left( 1 + \sup_{r \in [0,T]} \| y_{n}(r) \|_{\mcH^{1}}^{3} \right) \diff r \right|^{q} \right] + \E \left[ \left| \int_{s \wedge \tau_{R}^{n}}^{t \wedge \tau_{R}^{n}} \| y_{n}(r) \|_{\mcH^{1}}^{2} + \| F_{f}(r) \|_{L^{1}(\mbD)} \diff r \right|^{q} \right] \\
		&\quad + \E \left[\left| \int_{s \wedge \tau_{R}^{n}}^{t \wedge \tau_{R}^{n}} \| y_{n}(r) \|_{\mcH^{1}}^{2} + \| F_{H}(r) \|_{L^{1}(\mbD)} \diff r \right|^{q/2} \right] \Bigg) \\
		&\leq C_{e,q,T} \Bigg\{ \E \left[  \left( 1 + \sup_{r \in [0,T \wedge \tau_{R}^{n}]} \| y_{n}(r) \|_{\mcH^{1}}^{3q} \right)  \right] |t-s|^{q}  + \E \left[ \left( \sup_{r \in [0,T\wedge \tau_{R}^{n}]} \| y_{n}(r) \|_{\mcH^{1}}^{2q} + \| F_{f} \|_{L^{\infty}([0,T];L^{1}(\mbD))}^{q} \right) \right]  |t-s|^{q} \\
		&\quad + \E \left[ \left( \sup_{r \in [0,T\wedge \tau_{R}^{n}]} \| y_{n}(r) \|_{\mcH^{1}}^{q} + \| F_{H} \|_{L^{\infty}([0,T];L^{1}(\mbD))}^{q/2} \right) \right]  |t-s|^{q/2}\Bigg\} \\
		&\leq C_{e,q,T,R} \E \left[  \left( 1 + \sup_{r \in [0,T]} \| y_{n}(r) \|_{\mcH^{1}}^{2} + \| F_{f} \|_{L^{\infty}([0,T];L^{1}(\mbD))}^{q} + \| F_{H} \|_{L^{\infty}([0,T];L^{1}(\mbD))}^{q/2} \right)  \right] |t-s|^{q/2} \\
		&\leq C_{e,q,T,R,f,H,N,y_{0}}|t-s|^{q/2},
	\end{align*}
	where we have used in the penultimate step that $q \geq 2$ as well as the definition of $\tau_{R}^{n}$ and hence that $\| y_{n}(r) \|_{\mcH^{1}}^{q} = \| y_{n}(r) \|_{\mcH^{1}}^{q-2} \cdot \| y_{n}(r) \|_{\mcH^{1}}^{2} \leq R^{q-2} \| y_{n}(r) \|_{\mcH^{1}}^{2}$ and similarly for the other terms. Finally, we have used the \emph{a priori} estimate \eqref{STMHD_ex_eq_apriori} in the last step. 
	
	Thus by the Kolmogorov-\v{C}entsov continuity criterion (e.g. in the version of \cite[Theorem 3.1, p. 28 f.]{FH14}) with parameters $\beta = \frac{q/2}{q} = 1/2$, we find that for every $T > 0$ and $\alpha \in (0, \frac{1}{2} - \frac{1}{q})$
	\begin{align*}
		\E \left[ \sup_{s,t \in [0,T], |t-s| \leq \delta} | \langle y_{n}(t \wedge \tau_{R}^{n}) - y_{n}(s \wedge \tau_{R}^{n}), e \rangle_{\mcH^{1}} |^{q} \right] \leq C_{e,q,T,R,f,H,N,y_{0}} \cdot \delta^{\alpha}.
	\end{align*}
	Therefore, for arbitrary $\ve > 0$ and $R> 0$, we find using \eqref{STMHD_ex_eq_tightness_H1bdd}
	\begin{align*}
		&\sup_{n \in \N} P \left\{ \sup_{s,t \in [0,T], |t-s| \leq \delta} | \langle y_{n}(t) - y_{n}(s), e \rangle_{\mcH^{1}} | > \ve \right\} \\
		&\leq \sup_{n \in \N} P \left\{ \sup_{s,t \in [0,T], |t-s| \leq \delta} | \langle y_{n}(t) - y_{n}(s), e \rangle_{\mcH^{1}} | > \ve; \tau_{R}^{n} \geq T \right\} + \sup_{n \in \N} P \left\{  \tau_{R}^{n} < T \right\} \\
		&\leq  \frac{C_{e,q,T,R,f,H,N,y_{0}} \cdot \delta^{\alpha}}{\varepsilon^{q}} + \frac{C_{T,N}}{R^{2}}.
	\end{align*}
	Letting first $\delta \downarrow 0$ and then $R \rightarrow \infty$, we find
	\begin{equation}\label{STMHD_ex_eq_tightness_equicont}
		\lim_{\delta \downarrow 0} \sup_{n \in \N} P \left\{ \sup_{s,t \in [0,T], |t-s| \leq \delta} | \langle y_{n}(t) - y_{n}(s), e \rangle_{\mcH^{1}} | > \ve \right\} = 0.
	\end{equation}
	Thanks to \eqref{STMHD_ex_eq_tightness_H1bdd} and \eqref{STMHD_ex_eq_tightness_equicont}, we can now invoke Lemma \ref{STMHD_thm_tightness_lemma} to conclude that $(\mu_{n})_{n \in \N}$ is a tight family of probability measures on $(\mcX, \mcB(\mcX))$.
\end{proof}
	The tightness implies the existence of a subsequence (which we again denote by $(\mu_{n})_{n \in \N}$) that converges weakly to a measure $\mu \in \mcP(\mcX)$.
	
	Next we apply Skorokhod's coupling theorem (cf. \cite[Theorem 4.30, p. 79]{Kallenberg02}) to infer the existence of a probability space $(\tilde{\O}, \tilde{\mcF},\tilde{P})$ and $\mcX$-valued random variables $\tilde{y}^{n}$ and $\tilde{y}$ such that
	\begin{enumerate}[label=(\Roman*), ref=(\Roman*)]
		\item\label{STMHD_ex_itm_Skorokhod_law} the law of $\tilde{y}^{n}$ is the same as that of $y_{n}$ for all $n \in \N$, i.e. $\tilde{P} \circ \tilde{y}_{n}^{-1} = \mu_{n}$;
		\item\label{STMHD_ex_itm_Skorokhod_convPas} the convergence $\tilde{y}_{n} \rightarrow \tilde{y}$ holds in $\mcX$, $\tilde{P}$-a.s., and $\tilde{y}$ has law $\mu$.
	\end{enumerate}
	By Fatou's lemma and the uniform (in $n$) \emph{a priori} estimates \eqref{STMHD_ex_eq_apriori}, the same estimates also hold for the limiting process: for every $T > 0$
	\begin{equation}\label{STMHD_eq_Skorohod_H1est}
			\E^{\tilde{P}} \left[ \sup_{t \in [0,T]} \| \tilde{y}(t) \|_{\mcH^{1}}^{2} \right] \leq \liminf_{n \rightarrow \infty} \E^{\tilde{P}} \left[ \sup_{t \in [0,T]} \| \tilde{y}_{n}(t) \|_{\mcH^{1}}^{2} \right] < \infty,
	\end{equation}
as well as
		\begin{equation}
			\begin{split}
			\int_{0}^{T} \E^{\tilde{P}} \left[  \| \tilde{y}(s) \|_{\mcH^{2}}^{2} \right] \diff s < \infty,
			\end{split}
		\end{equation}
and
		\begin{equation}\label{STMHD_eq_Skorohod_NLest}
			\int\limits_{0}^{T} \E^{\tilde{P}} \Big[ \| |\tilde{\bm{v}}| \cdot |\nabla \tilde{\bm{v}} | \|_{L^{2}}^{2}   + \| |\tilde{\bm{B}}| \cdot |\nabla \tilde{\bm{B}} | \|_{L^{2}}^{2} + \| |\tilde{\bm{v}}| \cdot |\nabla \tilde{\bm{B}} | \|_{L^{2}}^{2}  + \| |\tilde{\bm{B}}| \cdot  |\nabla \tilde{\bm{v}} | \|_{L^{2}}^{2} \Big] \diff s 
			< \infty.
		\end{equation}
In the periodic case $\mbD = \mbT^{3}$, we additionally have
		\begin{equation}\label{STMHD_eq_Skorohod_periodic}
				\int_{0}^{t} \E^{\tilde{P}} \left[ \| \tilde{y}(s) \|_{\mcL^{4}}^{4} \right] \diff s \leq \liminf_{n \rightarrow \infty} \int_{0}^{t} \E \left[ \| y_{n}(s) \|_{\mcL^{4}}^{4} \right] \diff s < \infty.
		\end{equation}
		Next we want to study the martingale problem associated to our solutions $\tilde{y}_{n}$ and prove that their limit solves a martingale problem as well, giving existence of a weak solution to the TMHD equations. To this end, take any $\vp \in C_{c}^{\infty}(\R)$, $e \in \mcE$, $t \geq 0$, $y \in \mcX$. We define the following process:
		\begin{align*}
			M_{e}^{\vp}(t,y) := I_{1}^{\vp}(t,y) -  I_{2}^{\vp}(t,y) -  I_{3}^{\vp}(t,y) -  I_{4}^{\vp}(t,y) -  I_{5}^{\vp}(t,y),
		\end{align*}
		where
		\begin{align*}
			I_{1}^{\vp}(t,y) &:= \vp(\langle y(t) , e \rangle_{\mcH^{1}}), \\
			I_{2}^{\vp}(t,y) &:= \vp(\langle y(0) , e \rangle_{\mcH^{1}}), \\
			I_{3}^{\vp}(t,y) &:= \int_{0}^{t} \vp'(\langle y(0) , e \rangle_{\mcH^{1}}) \cdot \langle \mcA(y(s)),e \rangle_{\mcH^{1}} \diff s, \\
			I_{4}^{\vp}(t,y) &:= \int_{0}^{t} \vp'(\langle y(0) , e \rangle_{\mcH^{1}}) \cdot \langle f(s,y(s)) , e \rangle_{\mcH^{1}} \diff s, \\
			I_{5}^{\vp}(t,y) &:= \frac{1}{2} \int_{0}^{t} \vp''(\langle y(0) , e \rangle_{\mcH^{1}}) \cdot \| \langle \mcB(s,y(s)) , e \rangle_{\mcH^{1}} \|_{\ell^{2}}^{2} \diff s.
		\end{align*}
Our aim now is to show that $M_{e}^{\vp}(\cdot,y)$ is a martingale with respect to the stochastic basis $(\mcX, \mcB(\mcX), \mu$, $(\mcB_{t}(\mcX))_{t \geq 0})$, i.e. it solves the martingale problem. This implies the existence of a weak solution.

Since $e \in \mcE \subset \mcV$, as noted before, it has compact support, i.e. there exists an $m \in \N$ such that $\supp(e) \subset \mcO := \overline{B_{m}(0)} \subset \R^{3}$.

In order to show convergence of the martingale problems, we need to prove uniform moment estimates as well as convergence in probability. The next lemma provides the moment estimates.
\begin{lemma}[uniform integrability of $M_{e}^{\vp}$]
	The following estimate holds
	\begin{equation}\label{STMHD_ex_eq_unif_int}
		\sup_{n \in \N} \E^{\tilde{P}} \left[ \left| M_{e}^{\vp}(t,\tilde{y}_{n}) \right|^{4/3} \right] + \E^{\tilde{P}} \left[ \left| M_{e}^{\vp}(t,\tilde{y}) \right|^{4/3} \right] < \infty.
	\end{equation}
\end{lemma}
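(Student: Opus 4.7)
The plan is to split $M_{e}^{\vp}(t,y) = I_{1}^{\vp} - I_{2}^{\vp} - I_{3}^{\vp} - I_{4}^{\vp} - I_{5}^{\vp}$ and bound the $L^{4/3}(\tilde P)$-norm of each summand individually. Since $\vp \in C_{c}^{\infty}(\R)$, the functions $\vp, \vp', \vp''$ are all uniformly bounded, so $|I_{1}^{\vp}|, |I_{2}^{\vp}| \le \|\vp\|_{\infty}$ contribute only constants. The main work concerns $I_{3}^{\vp}, I_{4}^{\vp}, I_{5}^{\vp}$, and I would invoke the test-function estimates of Lemma \ref{STMHD_thm_estAB} together with the \emph{a priori} bounds of Lemma \ref{STMHD_thm_apriori}. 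The compactness of the support of $e \in \mcE \subset \mcV$ is the key structural fact that allows the integrands to be controlled by $L^{p}$-norms on the fixed compact set $\mcO := \supp(e)$.

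For $I_{3}^{\vp}$, I would use \eqref{STMHD_prel_eq_A_L3_est} to obtain $|I_{3}^{\vp}(t,y)| \le C_{\vp,e}\int_{0}^{t} (1 + \|y\|_{\mcL^{3}(\mcO)}^{3})\,ds$. Since $\mcO$ is bounded, H\"older's inequality yields $\|y\|_{\mcL^{3}(\mcO)}^{3} \le \lambda(\mcO)^{3/4}\|y\|_{\mcL^{12}(\mcO)}^{3}$, and a further H\"older in time gives $\bigl(\int_{0}^{t} \|y\|_{\mcL^{12}}^{3}\,ds\bigr)^{4/3} \le t^{1/3}\int_{0}^{t} \|y\|_{\mcL^{12}}^{4}\,ds$. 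Then the auxiliary Gagliardo--Nirenberg estimate \eqref{STMHD_eq_GNS_aux_est} combined with the bound on $\int_{0}^{T} \E\bigl(\||\bm{v}_{n}||\nabla\bm{v}_{n}|\|_{L^{2}}^{2}+\||\bm{B}_{n}||\nabla\bm{B}_{n}|\|_{L^{2}}^{2}\bigr)ds$ implicit in \eqref{STMHD_ex_eq_apriori_supE} delivers the required uniform bound for $I_{3}^{\vp}(t,\tilde y_{n})$. For $I_{4}^{\vp}$, Assumption \ref{STMHD_prel_itm_ass_H1} combined with Cauchy--Schwarz on $\mcO$ yields $|\langle f(s,y), e\rangle_{\mcH^{1}}| \le C_{e}\bigl(\|y\|_{\mcH^{0}} + \|F_{f}(s)\|_{L^{1}(\mbD)}^{1/2}\bigr)$, whence $|I_{4}^{\vp}|^{4/3} \le Ct^{4/3}\bigl(\sup_{s} \|y\|_{\mcH^{0}}^{4/3} + \|F_{f}\|_{L^{\infty}_{t}L^{1}_{x}}^{2/3}\bigr)$, and Jensen's inequality $\E\sup_{s}\|y\|_{\mcH^{0}}^{4/3} \le (\E\sup_{s}\|y\|_{\mcH^{1}}^{2})^{2/3}$ together with \eqref{STMHD_ex_eq_apriori} closes the estimate.

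For $I_{5}^{\vp}$, estimate \eqref{STMHD_prel_eq_B_H1_est} gives $\|\langle \mcB(s,y), e\rangle\|_{\ell^{2}}^{2} \le C_{e}\bigl(\|F_{H}(s)\|_{L^{1}(\mbD)} + \|y\|_{\mcL^{2}(\mcO)}^{2}\bigr)$. Bounding $\|y\|_{\mcL^{2}(\mcO)}^{2}\le \lambda(\mcO)^{5/6}\|y\|_{\mcL^{12}(\mcO)}^{2}$ as before and using Cauchy--Schwarz in time $\int_{0}^{t} \|y\|_{\mcL^{12}}^{2}\,ds \le t^{1/2}\bigl(\int_{0}^{t} \|y\|_{\mcL^{12}}^{4}\,ds\bigr)^{1/2}$, Jensen's inequality again reduces the bound to the a priori estimate \eqref{STMHD_ex_eq_apriori_supE}. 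The bound for $\tilde y$ in place of $\tilde y_{n}$ follows by the same pointwise-in-$\omega$ estimates, this time invoking the Fatou-type bounds \eqref{STMHD_eq_Skorohod_H1est}--\eqref{STMHD_eq_Skorohod_NLest} for the limiting process. The main delicate point is that the exponent $4/3$ is tightly matched to the available integrability: one only has second moments of $\sup_{s}\|y\|_{\mcH^{1}}$ and first moments of $\int_{0}^{T} \bigl(\||\bm{v}||\nabla \bm{v}|\|_{L^{2}}^{2} + \||\bm{B}||\nabla \bm{B}|\|_{L^{2}}^{2}\bigr)ds$, and $4/3$ is precisely the power at which H\"older in time lets us convert the cubic- and quadratic-in-$y$ integrands of $I_{3}^{\vp}, I_{5}^{\vp}$ into quartic $\mcL^{12}$-quantities covered by \eqref{STMHD_eq_GNS_aux_est}; any larger exponent would force a strengthening of Lemma \ref{STMHD_thm_apriori}.
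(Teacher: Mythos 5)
Your proposal is correct and follows essentially the same route as the paper: the same term-by-term decomposition, the bounds $|I_{1}^{\vp}|,|I_{2}^{\vp}|\leq\|\vp\|_{\infty}$, estimate \eqref{STMHD_prel_eq_A_L3_est} plus the embedding into $\mcL^{12}(\mcO)$ and \eqref{STMHD_eq_GNS_aux_est} for $I_{3}^{\vp}$, Assumption \ref{STMHD_prel_itm_ass_H1} with Jensen for $I_{4}^{\vp}$, and \eqref{STMHD_prel_eq_B_H1_est} for $I_{5}^{\vp}$, all closed by the a priori estimates transferred to $\tilde{y}_{n}$ and $\tilde{y}$. The only differences are cosmetic (you apply H\"older in time where the paper applies Jensen first), so no changes are needed.
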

\begin{proof}
	We show that each of the terms of $M_{e}^{\vp}$ are bounded. The terms $I_{1,2}^{\vp}$ are obviously bounded by a constant $C_{\vp}$, since $\vp \in C_{c}^{\infty}$.
	
	For $I_{3}^{\vp}$ we have by Jensen's inequality for the temporal integral, as well as Equations \eqref{STMHD_prel_eq_A_L3_est} and \eqref{STMHD_eq_GNS_aux_est}
	\begin{align*}
		\E^{\tilde{P}} \left[ |I_{3}^{\vp}(t, \tilde{y}_{n}) |^{4/3} \right] &\leq T^{4/3-1} \| \vp' \|_{L^{\infty}}^{4/3} \int_{0}^{T} \E^{\tilde{P}} \left[ |\langle \mcA(\tilde{y}_{n}(s)), e \rangle_{\mcH^{1}}|^{4/3} \right] \diff s \\
		&\leq C_{T,\vp,e} \int_{0}^{T} \E^{\tilde{P}} \left[ 1 + \| \tilde{y}_{n}(s) \|_{\mcL^{3}(\mcO)}^{4} \right] \diff s \\
		&\leq C_{T,\vp,e} \int_{0}^{T} \E^{\tilde{P}} \left[ 1 + \| \tilde{y}_{n}(s) \|_{\mcL^{12}(\mcO)}^{4} \right] \diff s \\
		&\leq C_{T,\vp,e} \int_{0}^{T} \E^{\tilde{P}} \left[ 1 + \left\| |\tilde{\bm{v}}| |\nabla \tilde{\bm{v}}| \right\|_{L^{2}}^{2} + \left\| |\tilde{\bm{B}}| |\nabla \tilde{\bm{B}}| \right\|_{L^{2}}^{2}\right] \diff s.
	\end{align*}
	This last term is bounded by our \emph{a priori} estimates \eqref{STMHD_eq_Skorohod_NLest}.

	In the case of periodic domain $\mbD = \mbT^{3}$, we have by Equations \eqref{STMHD_prel_eq_A_L3_est} and \eqref{STMHD_eq_Skorohod_periodic}
	\begin{align*}
		\E^{\tilde{P}} \left[ |I_{3}^{\vp}(t, \tilde{y}_{n}) |^{4/3} \right] &\leq T^{4/3-1} \| \vp \|_{L^{\infty}}^{4/3} \int_{0}^{T} \E^{\tilde{P}} \left[ |\langle \mcA(\tilde{y}_{n}(s)), e \rangle_{\mcH^{1}}|^{4/3} \right] \diff s \\
		&\leq C_{T,\vp,e} \int_{0}^{T} \E^{\tilde{P}} \left[ 1 + \| \tilde{y}_{n}(s) \|_{\mcL^{3}(\mbT^{3})}^{4} \right] \diff s \\
		&\leq C_{T,\vp,e} \int_{0}^{T} \E^{\tilde{P}} \left[ 1 + \| \tilde{y}_{n}(s) \|_{\mcL^{4}(\mbT^{3})}^{4} \right] \diff s \\
		&\leq C_{T,\vp,e,N,f,H,y_{0}}.
	\end{align*}
	The other two terms can be dealt with swiftly: by Jensen's inequality for the convex function $x \mapsto x^{4/3}$, H\"older's inequality for $p = 3/2, q = 3$, Jensen's inequality for the concave function $x \mapsto x^{2/3}$, Assumption \ref{STMHD_prel_itm_ass_H1} and \eqref{STMHD_eq_Skorohod_H1est} we find
	\begin{align*}
		\E^{\tilde{P}} \left[ |I_{4}^{\vp}(t, \tilde{y}_{n}) |^{4/3} \right] &\leq C_{T,\vp} \|e \|_{\mcH^{2}} \int_{0}^{T} \E^{\tilde{P}} \left[ 1 \cdot \| f(s,\tilde{y}_{n}(s)) \|_{\mcH^{0}}^{4/3} \right] \diff s \\
		&\leq C_{T,\vp,e} \int_{0}^{T} \left(\E^{\tilde{P}} \left[ \| f(s,\tilde{y}_{n}(s)) \|_{\mcH^{0}}^{2} \right] \right)^{2/3} \diff s \\
		&\leq C_{T,\vp,e} \left( \int_{0}^{T} \left( 1 + \E^{\tilde{P}} \left[ \| \tilde{y}_{n}(s) \|_{\mcH^{0}}^{2} \right] + \| F_{f}(s) \|_{L^{1}(\mbD)} \right) \diff s \right)^{2/3} \\
		&\leq C_{T,\vp,e} \left(1 + \E^{\tilde{P}}\left[ \sup_{s \in [0,T]} \| \tilde{y}_{n}(s) \|_{\mcH^{1}}^{2} \right] + \| F_{f} \|_{L^{1}([0,T] \times \mbD)} \right)^{2/3}  \\
		&\leq C_{T,\vp, e, N, f,H, y_{0}},
	\end{align*}
	and by Equation \eqref{STMHD_prel_eq_B_H1_est}, we have in a similar manner
	\begin{align*}
		\E^{\tilde{P}} \left[ |I_{5}^{\vp}(t, \tilde{y}_{n}) |^{4/3} \right] &\leq C_{T,\vp} \int_{0}^{T} \E^{\tilde{P}} \left[ \| \mcB(s,\tilde{y}_{n}(s)) \|_{L_{2}(\ell^{2}\times \ell^{2};\mcH^{0})}^{2 \cdot 4/3} \right] \diff s \\
		&\leq C_{T,\vp,e,H,\Sigma} \int_{0}^{T} 1 + \E^{\tilde{P}} \left[ \| \tilde{y}_{n}(s) \|_{\mcL^{2}(\mcO)}^{8/3} \right] \diff s \\
		&\leq C_{T,\vp,e,H,\Sigma} \int_{0}^{T} 1 + \E^{\tilde{P}} \left[ \| \tilde{y}_{n}(s) \|_{\mcL^{3}(\mcO)}^{4} \right] \diff s \\
		&\leq C_{T,\vp,e,H,\Sigma} \int_{0}^{T} 1 + \E^{\tilde{P}} \left[ \| \tilde{y}_{n}(s) \|_{\mcL^{12}(\mcO)}^{4} \right] \diff s \\
		&\leq C_{T,\vp,e,H,\Sigma} \int_{0}^{T} \E^{\tilde{P}} \left[ 1 + \left\| |\tilde{\bm{v}}| |\nabla \tilde{\bm{v}}| \right\|_{L^{2}}^{2} + \left\| |\tilde{\bm{B}}| |\nabla \tilde{\bm{B}}| \right\|_{L^{2}}^{2}\right] \diff s \\
		&\leq C_{T,\vp,e,H,\Sigma,N,y_{0}}.
	\end{align*}
\end{proof}
Our next and last ingredient for the existence proof of weak solutions is the convergence in probability of the martingales defined above, as in \cite[Lemma 3.12, p. 236 f.]{RZ09b}.
\begin{lemma}[convergence in probability]
	For every $t > 0$, $M_{e}^{\vp}(t,\tilde{y}_{n}) \rightarrow M_{e}^{\vp}(t,\tilde{y})$ in probability, i.e. for every $\varepsilon > 0$
	\begin{equation}\label{STMHD_ex_eq_Pconv}
		\lim_{n \rightarrow \infty} \tilde{P} \left\{  \left| M_{e}^{\vp}(t,\tilde{y}_{n}) - M_{e}^{\vp}(t,\tilde{y}) \right| > \varepsilon \right\} = 0.
	\end{equation}
\end{lemma}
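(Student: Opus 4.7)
The strategy is to verify that each of the five summands in $M_e^{\vp}(t,y)$ converges termwise along the Skorokhod sequence, then use the uniform $L^{4/3}(\tilde P)$-bound from the preceding lemma together with Vitali's convergence theorem to upgrade almost-sure (subsequential) convergence to $L^1(\tilde P)$-convergence, whence convergence in probability. By Skorokhod coupling we have $\tilde y_n \to \tilde y$ in $\mcX$ $\tilde P$-a.s., i.e.\ $\sup_{s \in [0,t]} \|\tilde y_n(s) - \tilde y(s)\|_{\mcL^2(\mcO)} \to 0$ $\tilde P$-a.s., where $\mcO$ is a ball containing $\supp(e)$. Using $\langle z, e\rangle_{\mcH^1} = \langle z, (I-\D)e\rangle_{\mcL^2(\mcO)}$ and continuity of $\vp \in C_c^\infty(\R)$, this yields a.s.\ convergence of $I_1^\vp(t,\tilde y_n), I_2^\vp(t,\tilde y_n)$ to the corresponding quantities for $\tilde y$.

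For $I_3^\vp, I_4^\vp, I_5^\vp$, which are integrals over $[0,t]$, I would establish a.s.\ pointwise convergence of the integrands in $s$ together with an $L^1_s$-dominator, and then invoke dominated convergence in time. The decomposition $\langle \mcA(y), e\rangle_{\mcH^1} = \mcA_1 + \mcA_2 + \mcA_3$ from \eqref{STMHD_prel_eq_def_llbracket} lets one transfer all spatial derivatives onto $e$: the linear piece becomes $\langle \tilde y_n, \D(I-\D)e\rangle_{\mcL^2(\mcO)}$, which converges directly; the quadratic piece, after integration by parts, is controlled by
\begin{align*}
C_e \bigl\| \tilde y_n \otimes \tilde y_n - \tilde y \otimes \tilde y \bigr\|_{L^1(\mcO)} \leq C_e \|\tilde y_n - \tilde y\|_{\mcL^2(\mcO)} \bigl( \|\tilde y_n\|_{\mcL^2(\mcO)} + \|\tilde y\|_{\mcL^2(\mcO)} \bigr);
\end{align*}
and the cubic taming piece, using $|g_N(a) - g_N(b)| \leq 2|a-b|$, is controlled by
\begin{align*}
C_{e,N}\|\tilde y_n - \tilde y\|_{\mcL^2(\mcO)}\bigl(1 + \|\tilde y_n\|_{\mcL^4(\mcO)}^2 + \|\tilde y\|_{\mcL^4(\mcO)}^2\bigr),
\end{align*}
where the local $L^4$-norms are bounded via Sobolev embedding by $\|\tilde y_n\|_{\mcH^1}$, which is $\tilde P$-a.s.\ finite for a.e.\ $s$ by \eqref{STMHD_eq_Skorohod_H1est}. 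The term $I_4^\vp$ is handled even more simply: Assumption \ref{STMHD_prel_itm_ass_H1} gives $\|f(s,\tilde y_n) - f(s,\tilde y)\|_{\mcL^2(\mcO)} \leq C_{T,f}\|\tilde y_n - \tilde y\|_{\mcL^2(\mcO)}$, and for $I_5^\vp$, Assumptions \ref{STMHD_prel_itm_ass_H2}--\ref{STMHD_prel_itm_ass_H3} furnish the analogous Lipschitz bound for $\|\langle \mcB(s,\tilde y_n) - \mcB(s,\tilde y), e\rangle\|_{\ell^2}$.

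The main obstacle is that the $\mcH^1$- and local $L^4$-norms invoked above are only controlled in expectation, not uniformly in $\omega$, so dominated convergence in time applies only along a suitable subsequence. I would resolve this by the usual subsequence-of-subsequence argument: any subsequence $(n_k)$ admits a further subsequence along which $\sup_{s \in [0,T]}\|\tilde y_{n_{k_l}}(s)\|_{\mcH^1}$ is $\tilde P$-a.s.\ bounded on a set of large probability (by the bounds \eqref{STMHD_eq_Skorohod_H1est}, \eqref{STMHD_eq_Skorohod_NLest}), yielding a.s.\ convergence $M_e^\vp(t,\tilde y_{n_{k_l}}) \to M_e^\vp(t,\tilde y)$. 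The uniform bound $\sup_n \E^{\tilde P}|M_e^\vp(t,\tilde y_n)|^{4/3} < \infty$ from the preceding lemma supplies the requisite uniform integrability, so Vitali's theorem promotes this subsequential a.s.\ convergence to full-sequence convergence in $L^1(\tilde P)$, which gives \eqref{STMHD_ex_eq_Pconv}.
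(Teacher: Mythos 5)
Your strategy coincides with the paper's: termwise convergence of the five summands, with $I_{1}^{\vp},I_{2}^{\vp}$ handled via the Skorokhod a.s.\ convergence in $\mcX$ (hence in $\mcL^{2}(\mcO)$) and the continuity of $\vp$, and the three integral terms handled by transferring derivatives onto $e$ and using Lipschitz-type bounds on $\mcA$, $f$ and $\mcB$ -- your estimates for the quadratic and taming pieces are exactly the content of the stability estimate \eqref{STMHD_prel_eq_stability_est}, which the paper invokes directly -- together with a truncation of the $\mcH^{1}$-norms.

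The one step that does not survive a literal reading is the extraction of ``a further subsequence along which $\sup_{s\in[0,T]}\|\tilde{y}_{n_{k_l}}(s)\|_{\mcH^{1}}$ is $\tilde{P}$-a.s.\ bounded''. The only available control is the uniform moment bound $\sup_{n}\E^{\tilde{P}}[\sup_{s}\|\tilde{y}_{n}(s)\|_{\mcH^{1}}^{2}]<\infty$, which yields $\sup_{n}\tilde{P}\{\sup_{s}\|\tilde{y}_{n}(s)\|_{\mcH^{1}}>R\}\leq C/R^{2}$ but does \emph{not} allow one to extract an a.s.\ uniformly bounded subsequence (consider i.i.d.\ integrable random variables with unbounded support: every subsequence has a.s.\ infinite supremum). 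Your parenthetical ``on a set of large probability'' is the correct repair, but it changes the logic: one must fix $R$, establish convergence on the event $\{\sup_{s}\|\tilde{y}_{n}(s)\|_{\mcH^{1}}\leq R\}$ via Markov's inequality and dominated convergence (noting that the norm of the limit, $\|\tilde{y}(s)\|_{\mcH^{1}}$, which also enters the stability estimate, is merely a.s.\ square-integrable in $s$ by \eqref{STMHD_eq_Skorohod_H1est} -- which suffices), absorb the complement into an error of size $C/R^{2}$ \emph{uniformly in} $n$, and send $R\to\infty$ only after $n\to\infty$. This is precisely what the paper does with the stopping times $\tilde{\tau}_{R}^{n}$ and the bound \eqref{STMHD_ex_eq_convP_STest}. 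A minor point: uniform integrability and Vitali's theorem are not needed for the stated conclusion, which is only convergence in probability; the bound \eqref{STMHD_ex_eq_unif_int} is used separately in the proof of Theorem \ref{STMHD_thm_ex} to pass to the limit inside the expectation.
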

\begin{proof}
	As before, we show convergence of all the terms of $M_{e}^{\vp}$ and we denote $\mcO := \supp(e)$.
	Note that by \ref{STMHD_ex_itm_Skorokhod_convPas}, and since $\mcO$ is bounded, we find by the definition of the metric on $\mcX$ that
	\begin{equation}
		\lim_{n \rightarrow \infty} \int_{\mcO} | \tilde{y}_{n}(t,x,\tilde{\o}) - \tilde{y}(t,x,\tilde{\o}) |^{2} \diff x = 0, \quad \tilde{P}-\mathrm{a.a.~} \tilde{\o} \in \tilde{\O}.
	\end{equation}
	This then implies that for $P$-a.a. $\tilde{\o} \in \tilde{\O}$ and all $t \in [0,T]$
	$$
		| \langle \tilde{y}_{n}(t,\cdot,\tilde{\o}) - \tilde{y}(t,\cdot,\tilde{\o}), e \rangle_{\mcH^{1}} | \leq  \| \tilde{y}_{n}(t,\cdot,\tilde{\o}) - \tilde{y}(t,\cdot,\tilde{\o}) \|_{\mcL^{2}(\mcO)} \| e \|_{\mcH^{2}} \rightarrow 0,
	$$
	as $n \rightarrow \infty$. By the continuity and boundedness of $\vp$ and Lebesgue's dominated convergence theorem, we find
	\begin{align*}
		\lim_{n \rightarrow \infty} \E^{\tilde{P}} \left[|I_{1}^{\vp}(t,\tilde{y}_{n}) - I_{1}^{\vp}(t,\tilde{y})| \right] = \lim_{n \rightarrow \infty} \E^{\tilde{P}} \left[ \left| \vp \left(\langle \tilde{y}_{n}(t), e \rangle_{\mcH^{1}} \right) - \vp \left( \langle \tilde{y}(t), e \rangle_{\mcH^{1}} \right) \right| \right] = 0.
	\end{align*}
	Similarly one can show
	\begin{align*}
		\lim_{n \rightarrow \infty} \E^{\tilde{P}} \left[|I_{2}^{\vp}(t,\tilde{y}_{n}) - I_{2}^{\vp}(t,\tilde{y})| \right] = \lim_{n \rightarrow \infty} \E^{\tilde{P}} \left[ \left| \vp \left(\langle \tilde{y}_{n}(0), e \rangle_{\mcH^{1}} \right) - \vp \left( \langle \tilde{y}(0), e \rangle_{\mcH^{1}} \right) \right| \right] = 0.
	\end{align*}
	For $I_{3}^{\vp}$, we define for any $R > 0$ and $n \in \N$ the stopping times
	\begin{align*}
		\tilde{\tau}_{R}^{n} := \inf \{t \geq 0 ~|~ \| \tilde{y}_{n}(t) \|_{\mcH^{1}} \geq R \}.
	\end{align*}
	Then by Chebychev's inequality, \ref{STMHD_ex_itm_Skorokhod_law} and \eqref{STMHD_ex_eq_apriori}, we get
	\begin{equation}\label{STMHD_ex_eq_convP_STest}
		\begin{split}
		\sup_{n \in \N} \tilde{P} \left\{ \tilde{\tau}_{R}^{n} \leq T \right\} &= \sup_{n \in \N} \tilde{P} \left\{ \| \tilde{y}_{n}(t) \|_{\mcH^{1}} \geq R \right\} \leq \sup_{n \in \N} \frac{1}{R^{2}}\E^{\tilde{P}} \left[ \| \tilde{y}_{n}(t) \|_{\mcH^{1}}^{2} \right] \\
		&\leq  \sup_{n \in \N} \frac{1}{R^{2}}\E^{P} \left[ \| {y}_{n}(t) \|_{\mcH^{1}}^{2} \right] \leq \frac{C_{T,N,f,H,y_{0}}}{R^{2}}.
		\end{split}
	\end{equation}
	For arbitrary $R > 0$ we thus find
	\begin{align*}
		&\lim_{n \rightarrow \infty} \tilde{P} \left\{|I_{3}^{\vp}(t,\tilde{y}_{n}) - I_{3}^{\vp}(t,\tilde{y})| > \varepsilon \right\} \\
		&\leq \lim_{n \rightarrow \infty} \tilde{P} \left\{|I_{3}^{\vp}(t,\tilde{y}_{n}) - I_{3}^{\vp}(t,\tilde{y})| > \varepsilon , \tilde{\tau}_{R}^{n} > T \right\} + \frac{C_{T,N,f,H,y_{0}}}{R^{2}}.
	\end{align*}
Equations \eqref{STMHD_prel_eq_A_L3_est}, \eqref{STMHD_prel_eq_stability_est} of Lemma \ref{STMHD_thm_estAB} -- and again the continuity and boundedness of $\vp$ -- imply that for any $t \in [0,T]$ and all $\tilde{\o} \in \tilde{\O}$ with $\tilde{\tau}_{R}^{n}(\tilde{\o}) > T$
	\begin{align*}
		&\left| \vp'(\langle \tilde{y}_{n}(t,\tilde{\o}), e \rangle_{\mcH^{1}}) \langle \mcA(\tilde{y}_{n}(t,\tilde{\o})), e \rangle_{\mcH^{1}} - \vp'(\langle \tilde{y}(t,\tilde{\o}), e \rangle_{\mcH^{1}}) \langle \mcA(\tilde{y}(t,\tilde{\o})), e \rangle_{\mcH^{1}} \right| \\
		&\leq \left| \vp'(\langle \tilde{y}_{n}(t,\tilde{\o}), e \rangle_{\mcH^{1}}) - \vp'(\langle \tilde{y}(t,\tilde{\o}), e \rangle_{\mcH^{1}}) \right| \left| \langle \mcA(\tilde{y}_{n}(t,\tilde{\o})), e \rangle_{\mcH^{1}} \right| \\
		&\quad + \left| \vp'(\langle \tilde{y}(t,\tilde{\o}), e \rangle_{\mcH^{1}}) \right| \left| \langle \mcA(\tilde{y}(t,\tilde{\o})) - \mcA(\tilde{y}_{n}(t,\tilde{\o})), e \rangle_{\mcH^{1}} \right| \rightarrow 0, \quad \mathrm{as~} n \rightarrow \infty.
	\end{align*}
	This -- combined with Markov's inequality and Lebesgue's dominated convergence theorem -- yields the desired convergence:
	\begin{align*}
		&\lim_{n \rightarrow \infty} \tilde{P} \left\{|I_{3}^{\vp}(t,\tilde{y}_{n}) - I_{3}^{\vp}(t,\tilde{y})| > \varepsilon \right\} \\
		&\leq \lim_{R \rightarrow \infty} \lim_{n \rightarrow \infty} \tilde{P} \left\{|I_{3}^{\vp}(t,\tilde{y}_{n}) - I_{3}^{\vp}(t,\tilde{y})| > \varepsilon , \tilde{\tau}_{R}^{n} > T \right\}	\\
		&\leq \lim_{R \rightarrow \infty} \lim_{n \rightarrow \infty} \frac{1}{\varepsilon} \E^{\tilde{P}} \left[ 1_{ \{\tilde{\tau}_{R}^{n} > T \}} |I_{3}^{\vp}(t,\tilde{y}_{n}) - I_{3}^{\vp}(t,\tilde{y})| \right] \\
		&\leq \lim_{R \rightarrow \infty}  \frac{1}{\varepsilon} \E^{\tilde{P}} \bigg[ \lim_{n \rightarrow \infty} \int_{0}^{t} 1_{ \{\tilde{\tau}_{R}^{n} > T \}} \cdot \Big| \vp'(\langle \tilde{y}_{n}(t), e \rangle_{\mcH^{1}}) \langle \mcA(\tilde{y}_{n}(t)), e \rangle_{\mcH^{1}} \\
		&\quad\quad\quad\quad- \vp'(\langle \tilde{y}(t), e \rangle_{\mcH^{1}}) \langle \mcA(\tilde{y}(t)), e \rangle_{\mcH^{1}} \Big| \diff s \bigg] = 0.
	\end{align*}
	Similarly, for $I_{4}^{\vp}$ we get by Assumption \ref{STMHD_prel_itm_ass_H1} and similar convergence arguments that
	$$
		\lim_{n \rightarrow \infty} \tilde{P} \left\{|I_{4}^{\vp}(t,\tilde{y}_{n}) - I_{4}^{\vp}(t,\tilde{y})| > \varepsilon \right\} = 0.
	$$	
Finally, for $I_{5}^{\vp}$ we find that since
\begin{align*}
	&\left| \| \langle \mcB(s,\tilde{y}_{n}(s)) , e \rangle_{\mcH^{1}} \|_{\ell^{2}}^{2} - \| \langle \mcB(s,\tilde{y}(s)) , e \rangle_{\mcH^{1}} \|_{\ell^{2}}^{2} \right| \\
	&\leq \left| \sum_{k=1}^{\infty} | \langle \mcB_{k}(s,\tilde{y}_{n}(s)) , e \rangle_{\mcH^{1}} |^{2} - | \langle \mcB_{k}(s,\tilde{y}(s)) , e \rangle_{\mcH^{1}} |^{2} \right| \\
	&\leq \left| \sum_{k=1}^{\infty} \left| \langle \mcB_{k}(s,\tilde{y}_{n}(s)) - \mcB_{k}(s,\tilde{y}(s)) , e \rangle_{\mcH^{1}} \right| \cdot  \left| \langle \mcB_{k}(s,\tilde{y}_{n}(s)) , e \rangle_{\mcH^{1}}  +  \langle \mcB_{k}(s,\tilde{y}(s)) , e \rangle_{\mcH^{1}} \right|  \right| \\
	&\leq \| e \|_{\mcH^{2}} \left| \sum_{k=1}^{\infty} \left| \langle \mcB_{k}(s,\tilde{y}_{n}(s)) - \mcB_{k}(s,\tilde{y}(s)) , e \rangle_{\mcH^{1}} \right| \cdot  \left( \| \mcB_{k}(s,\tilde{y}_{n}(s))\|_{\mcH^{0}}  +  \| \mcB_{k}(s,\tilde{y}(s)) \|_{\mcH^{0}} \right)  \right| \\
	&\leq C_{e}  \left\| \langle \mcB(s,\tilde{y}_{n}(s)) - \mcB(s,\tilde{y}(s)) , e \rangle_{\mcH^{1}} \right\|_{\ell^{2}} \cdot  \left( \| \mcB(s,\tilde{y}_{n}(s))\|_{L_{2}(\ell^{2}\times \ell^{2};\mcH^{0})}  +  \| \mcB(s,\tilde{y}(s)) \|_{L_{2}(\ell^{2}\times \ell^{2};\mcH^{0})} \right),
\end{align*}
when combining this with \eqref{STMHD_prel_eq_B_L2l2H0}, the convergence 
$$
	\lim_{n \rightarrow \infty} \tilde{P} \left\{|I_{5}^{\vp}(t,\tilde{y}_{n}) - I_{5}^{\vp}(t,\tilde{y})| > \varepsilon \right\} = 0,
$$
follows. This concludes the proof.
\end{proof}
We are now in a position to prove the main result of this section.
\begin{proof}[Proof of Theorem \ref{STMHD_thm_ex}]
	Let $t > s$, $G$ be a bounded $\R$-valued, $\mcB_{s}(\mcX)$-measurable continuous function on $\mcX$. Then we have by the generalised Lebesgue theorem
	\begin{alignat*}{3}
		&\E^{\mu} \left[ \left( M_{e}^{\vp}(t,y) - M_{e}^{\vp}(s,y) \right) \cdot G(y) \right] & & \\
		&= \E^{\tilde{P}} \left[ \left( M_{e}^{\vp}(t,\tilde{y}) - M_{e}^{\vp}(s,\tilde{y}) \right) \cdot G(\tilde{y}) \right] & &(\mu = \tilde{P}\circ \tilde{y}^{-1} )\\
		&= \lim_{n \rightarrow \infty} \E^{\tilde{P}} \left[ \left( M_{e}^{\vp}(t,\tilde{y}_{n}) - M_{e}^{\vp}(s,\tilde{y}_{n}) \right) \cdot G(\tilde{y}_{n}) \right] & & \left( \eqref{STMHD_ex_eq_unif_int}, \eqref{STMHD_ex_eq_Pconv} \right)\\
		&= \lim_{n \rightarrow \infty} \E^{P} \left[ \left( M_{e}^{\vp}(t,y_{n}) - M_{e}^{\vp}(s,y_{n}) \right) \cdot G(y_{n}) \right] \quad\quad & &  (\tilde{P} \circ \tilde{y}_{n}^{-1} = P \circ y_{n}^{-1}) \\
		&= 0. & &
	\end{alignat*}
	The last step uses the martingale property of $M_{e}^{\vp}(s,y_{n})$ on the stochastic basis $(\O,\mcF,P,(\mcF_{t})_{t \geq 0})$ and since $G(y_{n})$ is $\mcF_{s}$-measurable. Therefore, $\mu$ solves the martingale problem and by Proposition \ref{STMHD_thm_MP} we infer the existence of a weak solution to \eqref{STMHD_eq_evol_eqn}.
\end{proof}
\subsection{Proof of Theorem \ref{STMHD_thm_exuniq}}
\begin{proof}[Proof of Theorem \ref{STMHD_thm_exuniq}]
	The only thing left to prove is estimate \eqref{STMHD_ex_eq_aprioriNbound}. By It\^{o}'s formula, taking expectations and using \eqref{STMHD_prel_eq_A_testing1}, Assumption \ref{STMHD_prel_itm_ass_H1} and Equation \eqref{STMHD_prel_eq_B_L2l2H0}, we find
	\begin{align*}
		\E \| y(t) \|_{\mcH^{0}}^{2} &= \| y_{0} \|_{\mcH^{0}}^{2} + 2 \int_{0}^{t} \E \langle \mcA(y(s)), y(s) \rangle_{\mcH^{0}} \diff s \\
		&\quad + 2 \int_{0}^{t} \E \langle f(s,y(s)), y(s) \rangle_{\mcH^{0}} \diff s + \int_{0}^{t} \E \| \mcB(s,y(s)) \|_{L_{2}(\ell^{2}\times \ell^{2};\mcH^{0})}^{2} \diff s \\
		&\leq \| y_{0} \|_{\mcH^{0}}^{2} - \frac{3}{2} \int\limits_{0}^{t} \E \| y(s) \|_{\mcH^{1}}^{2} \diff s + C_{f,H} + C_{T,f,H} \int\limits_{0}^{t} \E \| y(s) \|_{\mcH^{0}}^{2} \diff s,
	\end{align*}
	which by Gronwall's lemma implies 
	\begin{align*}
		\E \| y(t) \|_{\mcH^{0}}^{2} + \int\limits_{0}^{t} \E \| y(s) \|_{\mcH^{1}}^{2} \diff s \leq C_{T,f,H} \left( \| y_{0} \|_{\mcH^{0}}^{2} + 1 \right).
	\end{align*}
	Applying Burkholder's inequality and performing similar calculations as in the proof of Lemma \ref{STMHD_thm_apriori}, we get the desired estimate.
\end{proof}
\section{Feller Property and Existence of Invariant Measures}\label{STMHD_sec_Fellerinv}

In this section we prove further properties of the tamed MHD equations. First, we show that they generate a Feller semigroup under stronger assumptions. Then, we prove that there exists an invariant measure for this Feller semigroup in the case of periodic boundary conditions.

We consider the \emph{time-homogeneous case}, i.e. the functions $f, \Sigma, H$ of our equations are assumed to be independent of time. Furthermore, we assume Lipschitz conditions on the first-order derivatives of the function $H$:
\begin{enumerate}[label=(H\arabic*)', ref=(H\arabic*)']
	\setcounter{enumi}{2}
	\item\label{STMHD_FP_itm_ass_H3} There exists a constant $C_{H} > 0$ and a function $F_{H} \in L^{1}(\mbD)$ such that for all $x \in \mbD$, $y,y' \in \R^{6}$, $j=1,2,3$ the following conditions hold:
	\begin{align*}
		\| \partial_{x^{j}} H(x,y) \|_{\ell^{2}}^{2} + \| H(x,y) \|_{\ell^{2}}^{2} &\leq C_{H} |y|^{2} + F_{H}(x), \\
		\| \partial_{x^{j}} H(x,y) - \partial_{x^{j}} H(x,y') \|_{\ell^{2}} &\leq C_{H} |y-y'|, \\
		\| \partial_{y^{j}} H(x,y) \|_{\ell^{2}} &\leq C_{H}, \\
		\| \partial_{y^{j}} H(x,y) - \partial_{(y')^{j}} H(x,y') \|_{\ell^{2}} &\leq C_{H} |y-y'|.
	\end{align*}
\end{enumerate}
For an initial condition $y_{0} \in \mcH^{1}$, let $y(t;y_{0})$ be the unique solution to \eqref{STMHD_eq_evol_eqn} with $y(0;y_{0}) = y_{0}$. Then by the uniqueness of solutions, we know that $\{ y(t;y_{0}) ~|~ y_{0} \in \mcH^{1}, t \geq 0 \}$ is a strong Markov process with state space $\mcH^{1}$. In proving the Feller property of the associated semigroup, we need the following result.
\begin{lemma}\label{STMHD_FP_thm_Lemma}
	For $y_{0}, y_{0}' \in \mcH^{1}$, $R > 0$, define the stopping times
	\begin{align*}
		\tau_{R}^{y_{0}} &:= \inf \{ t \geq 0 ~|~ \|y(t;y_{0})\|_{\mcH^{1}} > R \}, \\
		\tau_{R} := \tau_{R}^{y_{0}, y_{0}'} &:= \tau_{R}^{y_{0}} \wedge \tau_{R}^{y_{0}'}.
	\end{align*}
	Assume \ref{STMHD_prel_itm_ass_H1}, \ref{STMHD_prel_itm_ass_H2}, \ref{STMHD_FP_itm_ass_H3}. Then there is a constant $C_{t,R,N,f,H,\Sigma} > 0$ such that
	\begin{align*}
		\E \left[\| y(t \wedge \tau_{R};y_{0}) - y(t \wedge \tau_{R};y_{0}') \|_{\mcH^{1}}^{2} \right] \leq C_{t,R,N,f,H,\Sigma} \| y_{0} - y_{0}' \|_{\mcH^{1}}^{2}.
	\end{align*}
\end{lemma}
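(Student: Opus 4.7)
The strategy is entirely parallel to the pathwise uniqueness argument (Theorem \ref{STMHD_thm_uniq}), but carried out at the level of $\mcH^{1}$ rather than $\mcH^{0}$. Set $z(t) := y(t;y_{0}) - y(t;y_{0}')$, which satisfies the difference equation
\begin{equation*}
	\diff z(t) = \left[ \mcA(y(t;y_{0})) - \mcA(y(t;y_{0}')) + \mcP(f(y(t;y_{0})) - f(y(t;y_{0}')))\right] \diff t + \sum_{k} \left[ \mcB_{k}(y(t;y_{0})) - \mcB_{k}(y(t;y_{0}')) \right] \diff \mcW_{t}^{k},
\end{equation*}
with $z(0) = y_{0} - y_{0}'$. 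I would apply It\^{o}'s formula to $\|z(t \wedge \tau_{R})\|_{\mcH^{1}}^{2} = \langle z(t \wedge \tau_{R}), (I - \D) z(t \wedge \tau_{R}) \rangle_{\mcH^{0}}$, stop at $\tau_{R}$ to kill the stochastic integral upon taking expectation, and estimate the remaining drift and quadratic variation terms piece by piece, closing the argument with Gronwall.

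The drift splits as in \eqref{STMHD_prel_eq_def_llbracket}. The linear part $\mcA_{1}$ contributes $-2\|\nabla z\|_{\mcH^{1}}^{2}$, which together with the lower-order terms dominates the bad contributions via Equation \eqref{STMHD_prel_eq_Besselpot_est} (this is where the $1/36$ bound in Assumption \ref{STMHD_prel_itm_ass_H2} is used). The convective part $\mcA_{2}$, after writing each quadratic expression as a telescoping difference of the form $\bm{\theta}_{1} \otimes \bm{\psi}_{1} - \bm{\theta}_{2}\otimes\bm{\psi}_{2} = (\bm{\theta}_{1} - \bm{\theta}_{2}) \otimes \bm{\psi}_{1} + \bm{\theta}_{2} \otimes (\bm{\psi}_{1} - \bm{\psi}_{2})$ with $\bm{\theta}_{i}, \bm{\psi}_{i} \in \{\bm{v}_{i}, \bm{B}_{i}\}$, can be controlled by Sobolev embedding ($\mcH^{1} \hookrightarrow \mcL^{6}$, $\mcH^{2} \hookrightarrow \mcL^{\infty}$) and the Gagliardo-Nirenberg-type interpolation inequality, yielding bounds of the form
\begin{equation*}
	C_{R}\left(\|z\|_{\mcH^{1}}^{2} + \varepsilon \|z\|_{\mcH^{2}}^{2}\right),
\end{equation*}
where the $\mcH^{2}$ part is absorbed into $\mcA_{1}$ via Young's inequality, and the prefactor grows polynomially in $R$ due to the stopping. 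The taming difference $\mcA_{3}$ is handled via the pointwise Lipschitz estimate $|g_{N}(|y|^{2})y - g_{N}(|y'|^{2})y'| \leq C_{N}(1 + |y|^{2} + |y'|^{2})|y - y'|$, again absorbed into the $\mcH^{2}$-term plus a $C_{R,N}\|z\|_{\mcH^{1}}^{2}$ remainder. The forcing difference is controlled directly by Assumption \ref{STMHD_prel_itm_ass_H1}.

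The main obstacle, and the reason for the strengthened Assumption \ref{STMHD_FP_itm_ass_H3}, is the quadratic variation term $\|\mcB(y(t;y_{0})) - \mcB(y(t;y_{0}'))\|_{L_{2}(\ell^{2} \times \ell^{2};\mcH^{1})}^{2}$. Since the transport coefficient $\Sigma$ is independent of $y$, the difference $(\Sigma \cdot \nabla) z$ contributes (via the same computation as in Lemma \ref{STMHD_thm_estB}, using \eqref{STMHD_prel_eq_Besselpot_est} and the $1/36$ bound) at most $\frac{1}{2}\|z\|_{\mcH^{2}}^{2} + C_{T,\Sigma}\|z\|_{\mcH^{1}}^{2}$, which closes cleanly. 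The genuinely new contribution comes from $H(y(t;y_{0})) - H(y(t;y_{0}'))$, whose $\mcH^{1}$ Hilbert-Schmidt norm contains first derivatives; expanding via the chain rule as in the proof of \eqref{STMHD_prel_eq_B_L2l2H1} produces terms involving $\partial_{x^{j}}H(\cdot,y) - \partial_{x^{j}}H(\cdot,y')$ and $\partial_{y^{l}}H(\cdot,y) - \partial_{y^{l}}H(\cdot,y')$, which is precisely what the Lipschitz conditions in \ref{STMHD_FP_itm_ass_H3} are designed to bound by $C_{H}|z|^{2}$ plus $C_{H}|\nabla z|^{2}$ terms (the latter using that on $\{t \leq \tau_{R}\}$, $\|\nabla y(t;y_{0})\|_{\mcL^{\infty}}$ is not directly bounded but the products $|\partial_{y^{l}}H(y) - \partial_{y^{l}}H(y')|\cdot |\partial_{x^{j}}y|$ are estimated by $C_{H}|z|\cdot|\nabla y|$ and then by Young into $\varepsilon|\nabla z|^{2}$-terms with prefactors depending on $R$).

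Taking expectations, the stopped inequality reads
\begin{equation*}
	\E\|z(t \wedge \tau_{R})\|_{\mcH^{1}}^{2} + \E\int_{0}^{t \wedge \tau_{R}} \|z(s)\|_{\mcH^{2}}^{2} \diff s \leq \|y_{0} - y_{0}'\|_{\mcH^{1}}^{2} + C_{R,N,f,H,\Sigma} \int_{0}^{t} \E\|z(s \wedge \tau_{R})\|_{\mcH^{1}}^{2} \diff s,
\end{equation*}
after which Gronwall's lemma yields the claimed bound with $C_{t,R,N,f,H,\Sigma} = \exp(C_{R,N,f,H,\Sigma}\, t)$. I expect the bookkeeping of the MHD cross-terms in $\mcA_{2}$ and the careful use of \ref{STMHD_FP_itm_ass_H3} in the Hilbert-Schmidt bound for $\mcB$ to be the only real subtleties; everything else is a direct adaptation of \cite[Lemma 4.1]{RZ09b}.
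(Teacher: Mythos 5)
Your proposal is correct and follows essentially the same route as the paper's proof: It\^{o}'s formula for $\|z(t\wedge\tau_R)\|_{\mcH^1}^2$, the same splitting of the drift into Laplacian, convective, taming and forcing parts with Gagliardo--Nirenberg/Sobolev interpolation and Young's inequality to absorb everything into the $\mcH^2$ dissipation, the chain-rule expansion of the Hilbert--Schmidt norm of $\mcB(y)-\mcB(\tilde y)$ using the Lipschitz conditions of \ref{STMHD_FP_itm_ass_H3} together with \eqref{STMHD_prel_eq_Besselpot_est} and the $1/36$ bound, and finally Gronwall. The only minor imprecision is that the $1/36$ bound and Lemma \ref{STMHD_thm_Besselpot_est} enter through the quadratic-variation term (as you correctly state later), not through $\mcA_1$; this does not affect the argument.
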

\begin{proof}
	We follow the proof of \cite[Lemma 4.1, p. 238 ff.]{RZ09b} For notational convenience, we denote $y(t) := y(t;y_{0})$, $\tilde{y}(t) := y(t;y_{0}')$, $z(t) := y(t) - \tilde{y}(t)$ and $t_{R} := t \wedge \tau_{R}$. By It\^{o}'s formula, we have
	\begin{align*}
		\| z(t_{R}) \|_{\mcH^{1}}^{2} &= \| z(0) \|_{\mcH^{1}}^{2} + 2 \int\limits_{0}^{t_{R}} \langle \mcA(y(s)) - \mcA(\tilde{y}(s)), z(s) \rangle_{\mcH^{1}} \diff s + 2 \int\limits_{0}^{t_{R}} \langle f(y(s)) - f(\tilde{y}(s)), z(s) \rangle_{\mcH^{1}} \diff s \\
		&\quad + 2 \sum_{k=1}^{\infty} \int\limits_{0}^{t_{R}} \langle \mcB_{k}(y(s)) - \mcB_{k}(\tilde{y}(s)), z(s) \rangle_{\mcH^{1}} \diff \mcW_{s}^{k}  + \sum_{k=1}^{\infty} \int\limits_{0}^{t_{R}} \| \mcB_{k}(y(s)) - \mcB_{k}(\tilde{y}(s)), z(s) \|_{\mcH^{1}}^{2} \diff s \\
		&=: \| z(0) \|_{\mcH^{1}}^{2} + I_{1}(t_{R}) + I_{2}(t_{R}) + I_{3}(t_{R}) + I_{4}(t_{R}).
	\end{align*}
	We denote by $\bm{z}_{v}$ the velocity component of $z$, i.e. $\bm{z}_{v} := \bm{v} - \tilde{\bm{v}}$, and similarly we write $\bm{z}_{B} := \bm{B} - \tilde{\bm{B}}$. Then $I_{1}(t_{R})$ has the following form:
	\begin{align*}
		I_{1}(t_{R}) &= 2 \int\limits_{0}^{t_{R}} \langle \D z, z \rangle_{\mcH^{1}} - 2 \int\limits_{0}^{t_{R}} \langle g_{N}(|y|^{2})y - g_{N}(|\tilde{y}|^{2})\tilde{y}, z \rangle_{\mcH^{1}} \\
		&\quad + 2 \int\limits_{0}^{t_{R}} \Big\{ \langle - (\bm{v} \cdot \nabla) \bm{v} + (\tilde{\bm{v}} \cdot \nabla) \tilde{\bm{v}}, \bm{z}_{v} \rangle_{\mcH^{1}} + \langle (\bm{B} \cdot \nabla) \bm{B} - (\tilde{\bm{B}} \cdot \nabla) \tilde{\bm{B}}, \bm{z}_{v} \rangle_{\mcH^{1}} \\
		&\quad\quad + \langle - (\bm{v} \cdot \nabla) \bm{B} + (\tilde{\bm{v}} \cdot \nabla) \tilde{\bm{B}}, \bm{z}_{B} \rangle_{\mcH^{1}} +  \langle (\bm{B} \cdot \nabla) \bm{v} - (\tilde{\bm{B}} \cdot \nabla) \tilde{\bm{v}}, \bm{z}_{B} \rangle_{\mcH^{1}} \Big\} \diff s.
	\end{align*}
	The first term is readily analysed:
	\begin{align*}
		2 \int\limits_{0}^{t_{R}} \langle \D z, z \rangle_{\mcH^{1}} = - 2 \int\limits_{0}^{t_{R}} \| z \|_{\mcH^{2}}^{2} \diff s + 2 \int\limits_{0}^{t_{R}} \| z \|_{\mcH^{1}}^{2} \diff s.
	\end{align*}
	For the second term, we find by using Young's inequality, $g_{N}(r) \leq Cr$, H\"older's inequality (with $p=3$, $q=3/2$) and the Sobolev embedding $\mcH^{1} \subset \mcL^{6}$ that for some $\theta \in \R$
	\begin{align*}
		&\langle g_{N}(|y|^{2})y - g_{N}(|\tilde{y}|^{2})\tilde{y}, z \rangle_{\mcH^{1}} \\
		&\leq \left\|  g_{N}(|y|^{2}) |z| \right\|_{L^{2}}\left\| z \right\|_{\mcH^{2}} + \left\| |g_{N}'(\theta)| \cdot |z| (|y| + |\tilde{y}|) |\tilde{y}| \right\|_{L^{2}} \left\| z \right\|_{\mcH^{2}} \\
		&\leq 2 \varepsilon \left\| z \right\|_{\mcH^{2}}^{2} + C_{\varepsilon,N}\left( \left\|  g_{N}(|y|^{2}) |z| \right\|_{L^{2}}^{2} + \left\| |z| (|y|^{2} + |\tilde{y}|^{2} ) \right\|_{L^{2}}^{2} \right) \\
		&\leq 2 \varepsilon \left\| z \right\|_{\mcH^{2}}^{2} + C_{\varepsilon,N} \| z \|_{\mcL^{6}}^{2} \left( \| y \|_{\mcL^{6}}^{4} + \| \tilde{y} \|_{\mcL^{6}}^{4} \right) \\
		&\leq 2 \varepsilon \left\| z \right\|_{\mcH^{2}}^{2} + C_{\varepsilon,N} \| z \|_{\mcH^{1}}^{2} \left( \| y \|_{\mcH^{1}}^{4} + \| \tilde{y} \|_{\mcH^{1}}^{4} \right) \\
		&\leq 2 \varepsilon \left\| z \right\|_{\mcH^{2}}^{2} + C_{\varepsilon,N,R} \| z \|_{\mcH^{1}}^{2}.
	\end{align*}
	The third term consists of four sub-terms, all of which are very similar. We thus only estimate one of them: Young's inequality, the Sobolev embedding $H^{1} \subset L^{6}$ as well as the Gagliardo-Nirenberg-Sobolev inequalities $\| \bm{u} \|_{L^{\infty}} \leq C \| \bm{u} \|_{H^{2}}^{3/4} \| \bm{u} \|_{L^{2}}^{1/4}$ and $\| \nabla \bm{u} \|_{L^{3}} \leq C \| \bm{u} \|_{H^{2}}^{3/4} \| \bm{u} \|_{L^{2}}^{1/4}$, combined with another application of Young's inequality (with $p=4/3$, $q=4$) yield
	\begin{align*}
		&\langle (\bm{v} \cdot \nabla) \bm{v} - (\tilde{\bm{v}} \cdot \nabla) \tilde{\bm{v}}, \bm{z}_{v} \rangle_{H^{1}} \leq 2 \varepsilon \| \bm{z}_{v} \|_{H^{2}}^{2} + C_{\varepsilon} \left( \| (\bm{z}_{v} \cdot \nabla) \bm{v} \|_{L^{2}}^{2} + \| (\tilde{\bm{v}} \cdot \nabla) \bm{z}_{v} \|_{L^{2}}^{2} \right) \\
		&\leq 2 \varepsilon \| \bm{z}_{v} \|_{H^{2}}^{2} + C_{\varepsilon} \left( \| \bm{z}_{v} \|_{L^{\infty}}^{2} \| \nabla \bm{v} \|_{L^{2}}^{2} + \| \tilde{\bm{v}} \|_{L^{6}}^{2} \| \nabla \bm{z}_{v} \|_{L^{3}}^{2} \right) \\
		&\leq 2 \varepsilon \| \bm{z}_{v} \|_{H^{2}}^{2} + C_{\varepsilon} \left( \| \bm{z}_{v} \|_{H^{2}}^{3/2} \| \bm{z}_{v} \|_{L^{2}}^{1/2} \| \bm{v} \|_{H^{1}}^{2} + \| \tilde{\bm{v}} \|_{H^{1}}^{2} \| \bm{z}_{v} \|_{H^{2}}^{3/2} \| \bm{z}_{v} \|_{L^{2}}^{1/2} \right) \\
		&\leq 4 \varepsilon \| \bm{z}_{v} \|_{H^{2}}^{2} + C_{\varepsilon,R}\| \bm{z}_{v} \|_{L^{2}}^{2} 		\leq 4 \varepsilon \| \bm{z}_{v} \|_{H^{2}}^{2} + C_{\varepsilon,R}\| \bm{z}_{v} \|_{H^{1}}^{2}.
	\end{align*}
	Analysing the other sub-terms in the same way and putting everything together, we find
	\begin{equation}\label{STMHD_FP_eq_lemma_I1}
		I_{1}(t_{R}) \leq - 2 \int\limits_{0}^{t_{R}} \| z(s) \|_{\mcH^{2}}^{2} \diff s + C_{\varepsilon,N,R} \int\limits_{0}^{t_{R}} \| z(s) \|_{\mcH^{1}}^{2} \diff s + 10 \varepsilon \int\limits_{0}^{t_{R}} \| z(s) \|_{\mcH^{2}}^{2} \diff s.
	\end{equation}
	$I_{2}(t_{R})$ is estimated using Young's inequality and Assumption \ref{STMHD_prel_itm_ass_H1}
	\begin{align*}
		I_{2}(t_{R}) 
		&\leq \varepsilon \int\limits_{0}^{t_{R}} \| z(s) \|_{\mcH^{2}}^{2} \diff s+ C_{\varepsilon}\int\limits_{0}^{t_{R}} \| f(y(s)) - f(\tilde{y}(s)) \|_{\mcH^{0}}^{2} \diff s \\
		&\leq \varepsilon \int\limits_{0}^{t_{R}} \| z(s) \|_{\mcH^{2}}^{2} \diff s+ C_{\varepsilon,f}\int\limits_{0}^{t_{R}} \| z(s) \|_{\mcH^{0}}^{2} \diff s.
	\end{align*}
	The term $I_{3}(t_{R})$ is a martingale and thus vanishes after taking expectations.
	
	For $I_{4}(t_{R})$, we use the following considerations, similar to the ones in the proof of Lemma \ref{STMHD_thm_estB}:
	\begin{align*}
		\| \mcB(y) - \mcB(\tilde{y}) \|_{L_{2}(\ell^{2}\times \ell^{2};\mcH^{1})}^{2} = \| \mcB(y) - \mcB(\tilde{y}) \|_{L_{2}(\ell^{2}\times \ell^{2};\mcH^{0})}^{2} + \left\| \nabla \left( \mcB(y) - \mcB(\tilde{y}) \right) \right\|_{L_{2}(\ell^{2}\times \ell^{2};\mcH^{0})}^{2},
	\end{align*}
	and the latter term consists (by using the chain rule) of terms of the following form:
	\begin{align*}
		&\partial_{x^{j}} \left( \mcB_{k}(y) - \mcB_{k}(\tilde{y}) \right) = \partial_{x^{j}} \left( \mcP \left( \Sigma_{k}(x) \cdot \nabla \right) z + \mcP \left( H_{k}(x,y) - H_{k}(x,\tilde{y}) \right) \right) \\
		&= \mcP \Big\{ \left( \left( \partial_{x^{j}} \Sigma_{k}(x) \right) \cdot \nabla \right) z + \left( \Sigma_{k}(x) \cdot \nabla \right) \partial_{x^{j}} z 
		+  (\partial_{x^{j}} H_{k})(x,y) - (\partial_{x^{j}} H_{k})(x,\tilde{y}) \\
		&\quad + \sum_{i=1}^{6} \left[ \left( \partial_{y^{i}} H_{k}(x,y) \right)\partial_{x^{j}} z^{i} - \left\{ \left( \partial_{y^{i}} H_{k}(x,y) \right) - \left( \partial_{\tilde{y}^{i}} H_{k}(x,\tilde{y}) \right) \right\}\partial_{x^{j}} \tilde{y}^{i} \right]	\Big\}.
	\end{align*}
	Thus we find, using Assumptions \ref{STMHD_prel_itm_ass_H2} and \ref{STMHD_FP_itm_ass_H3}, as well as Equation \eqref{STMHD_prel_eq_Besselpot_est}, the Gagliardo-Nirenberg inequality and Young's inequality, that
	\begin{align*}
		&\| \mcB(y) - \mcB(\tilde{y}) \|_{L_{2}(\ell^{2}\times \ell^{2};\mcH^{1})}^{2} \\
		&\leq 2 \sum_{k=1}^{\infty} \left\| \left( \Sigma_{k} \cdot \nabla \right) z \right\|_{\mcH^{0}}^{2} + \| H_{k}(y) - H_{k}(\tilde{y}) \|_{\mcH^{0}}^{2} \\
		&\quad + \sum_{k=1}^{\infty} \int_{\mbD} \sum_{j=1}^{3} \Big|  \left( \left( \partial_{x^{j}} \Sigma_{k}(x) \right) \cdot \nabla \right) z + \left( \Sigma_{k}(x) \cdot \nabla \right) \partial_{x^{j}} z 
		+  (\partial_{x^{j}} H_{k})(x,y) - (\partial_{x^{j}} H_{k})(x,\tilde{y}) \\
		&\quad + \sum_{i=1}^{6} \left[ \left( \partial_{y^{i}} H_{k}(x,y) \right)\partial_{x^{j}} z^{i} - \left\{ \left( \partial_{y^{i}} H_{k}(x,y) \right) - \left( \partial_{\tilde{y}^{i}} H_{k}(x,\tilde{y}) \right) \right\}\partial_{x^{j}} \tilde{y}^{i} \right]  \Big|^{2} \diff x \\
		&\leq 2 \sup_{x \in \mbD} \left\|\Sigma_{k}(x) \right\|_{\ell^{2}}^{2} \| \nabla z \|_{\mcH^{0}}^{2} + C_{H} \| z \|_{\mcH^{0}}^{2} + 2 \sum_{k=1}^{\infty} \int_{\mbD} \sum_{j=1}^{3} \Big|  \left( \Sigma_{k}(x) \cdot \nabla \right) \partial_{x^{j}} z \Big|^{2} \diff x \\
		&\quad + 2\sum_{k=1}^{\infty} \int_{\mbD} \sum_{j=1}^{3} \Big|  \left( \left( \partial_{x^{j}} \Sigma_{k}(x) \right) \cdot \nabla \right) z 
		+  (\partial_{x^{j}} H_{k})(x,y) - (\partial_{x^{j}} H_{k})(x,\tilde{y}) \\
		&\quad + \sum_{i=1}^{6} \left( \partial_{y^{i}} H_{k}(x,y) \right)\partial_{x^{j}} z^{i} - \sum_{i=1}^{6} \left\{ \left( \partial_{y^{i}} H_{k}(x,y) \right) - \left( \partial_{\tilde{y}^{i}} H_{k}(x,\tilde{y}) \right) \right\}\partial_{x^{j}} \tilde{y}^{i}  \Big|^{2} \diff x \\
		&\leq 2 \sup_{x \in \mbD} \left\|\Sigma_{k}(x) \right\|_{\ell^{2}}^{2} \| z \|_{\mcH^{1}}^{2} + C_{H} \| z \|_{\mcH^{0}}^{2} + 2 \sup_{x \in \mbD} \left\|\Sigma_{k}(x) \right\|_{\ell^{2}}^{2} \sum_{j,l=1}^{3} \|  \partial_{x^{l}} \partial_{x^{j}} z \|_{\mcL^{2}}^{2} \\
		&\quad + 8 \sup_{x \in \mbD} \| \nabla \Sigma_{k}(x) \|_{\ell^{2}}^{2} \| \nabla z \|_{\mcH^{0}}^{2}
		+  C_{H} \| z \|_{\mcH^{1}}^{2} +  C_{H}  \int_{\mbD} \sum_{j=1}^{3} \sum_{i=1}^{6} |z|^{2} | \partial_{x^{j}} \tilde{y}^{i} |^{2} \diff x \\
		&\leq 2d^{2} \sup_{x \in \mbD} \left\|\Sigma_{k}(x) \right\|_{\ell^{2}}^{2}  \| z \|_{\mcH^{2}}^{2}	+  C_{\Sigma,H} \| z \|_{\mcH^{1}}^{2} + C_{H} \| z \|_{\mcL^{\infty}}^{2} \| \tilde{y} \|_{\mcH^{1}}^{2} \\
		&\leq (\frac{1}{2} + \varepsilon) \| z \|_{\mcH^{2}}^{2}	+ C_{\Sigma,H} \| z \|_{\mcH^{1}}^{2} + C_{H} \| z \|_{\mcH^{0}}^{2} \| \tilde{y} \|_{\mcH^{1}}^{8}.
	\end{align*}
	Integrating over time we finally get
	\begin{align*}
		I_{4}(t_{R}) \leq \int\limits_{0}^{t_{R}} (\frac{1}{2} + \varepsilon) \| z \|_{\mcH^{2}}^{2}	+  C_{H,R} \| z \|_{\mcH^{0}}^{2} +  C_{\Sigma,H} \| z \|_{\mcH^{1}}^{2} \diff s.
	\end{align*}
	Thus, adding all the contributions together,
	\begin{align*}
		&\E \left[\| z(t_{R}) \|_{\mcH^{1}}^{2} \right] \\ 
		&\leq \| z(0) \|_{\mcH^{1}}^{2} - (3/2 - 12\varepsilon) \E \left[ \int\limits_{0}^{t_{R}} \| z(s) \|_{\mcH^{2}}^{2} \diff s \right] + C_{\varepsilon,R,N,f,H,\Sigma} \E \left[ \int\limits_{0}^{t_{R}} \| z(s) \|_{\mcH^{1}}^{2} \diff s \right].
	\end{align*}
	Choosing $\varepsilon = \frac{1}{8}$, we find
	\begin{align*}
		\E \left[\| z(t \wedge \tau_{R}) \|_{\mcH^{1}}^{2} \right] \leq \| z(0) \|_{\mcH^{1}}^{2} + C_{R,N,f,H,\Sigma}  \int\limits_{0}^{t} \E \left[ \| z(s \wedge \tau_{R}) \|_{\mcH^{1}}^{2} \diff s \right].
	\end{align*}
	An application of Gronwall's Lemma then yields the desired result.
\end{proof}

Let $BC_{\text{loc}}(\mcH^{1})$ denote the set of bounded, locally uniformly continuous functions on $\mcH^{1}$. The supremum norm
\begin{align*}
	\| \phi \|_{\infty} := \sup_{y \in \mcH^{1}} | \phi(y) |
\end{align*}
turns this space into a Banach space.

For $t \geq 0$, define the semigroup $T_{t}$ associated with the Markov process $\{ y(t;y_{0}) ~|~ y_{0} \in \mcH^{1}, t \geq 0 \}$ by
\begin{align*}
	T_{t} \phi(y_{0}) := \E \left[ \phi(y(t;y_{0})) \right], \quad \phi \in BC_{\text{loc}}(\mcH^{1}).
\end{align*}
Using the previous lemma, we show that this is a Feller semigroup.
\begin{theorem}[Feller property]\label{STMHD_thm_Feller}
	Under the Assumptions \ref{STMHD_prel_itm_ass_H1}, \ref{STMHD_prel_itm_ass_H2} and \ref{STMHD_FP_itm_ass_H3}, for every $t \geq 0$, $T_{t}$ maps $BC_{\text{loc}}(\mcH^{1})$ into itself, i.e. it is a Feller semigroup on $BC_{\text{loc}}(\mcH^{1})$.
\end{theorem}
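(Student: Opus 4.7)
The plan is to reduce the Feller property to the stopped continuous-dependence Lemma \ref{STMHD_FP_thm_Lemma} combined with the a priori estimate \eqref{STMHD_ex_eq_aprioriNbound}, via a standard stopping-time truncation. Boundedness of $T_{t}\phi$ is trivial since $|T_{t}\phi(y_{0})| \leq \|\phi\|_{\infty}$. The main work is local uniform continuity.

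Fix $\phi \in BC_{\mathrm{loc}}(\mcH^{1})$, $t \geq 0$, $\varepsilon > 0$, and a radius $M > 0$. I would take any $y_{0}, y_{0}' \in \mcH^{1}$ with $\|y_{0}\|_{\mcH^{1}}, \|y_{0}'\|_{\mcH^{1}} \leq M$, and introduce the stopping times $\tau_{R} = \tau_{R}^{y_{0}, y_{0}'}$ from Lemma \ref{STMHD_FP_thm_Lemma}. I would then split
\begin{align*}
| T_{t}\phi(y_{0}) - T_{t}\phi(y_{0}') | \leq \E\bigl[ |\phi(y(t;y_{0})) - \phi(y(t;y_{0}'))|; \tau_{R} > t \bigr] + 2\|\phi\|_{\infty} P(\tau_{R} \leq t).
\end{align*}

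For the second summand, Chebyshev's inequality and the a priori estimate \eqref{STMHD_ex_eq_aprioriNbound} give
\begin{align*}
P(\tau_{R} \leq t) \leq P\Bigl( \sup_{s \in [0,t]} \|y(s;y_{0})\|_{\mcH^{1}} > R \Bigr) + P\Bigl( \sup_{s \in [0,t]} \|y(s;y_{0}')\|_{\mcH^{1}} > R \Bigr) \leq \frac{2 C_{t,f,H}(1+M^{2})N}{R^{2}},
\end{align*}
so for $R = R(\varepsilon, M, t, \phi)$ large enough the second summand is bounded by $\varepsilon$. With $R$ now fixed, since $\phi$ is uniformly continuous on the ball $\{ \|y\|_{\mcH^{1}} \leq R \}$, there is $\delta = \delta(\varepsilon, R) > 0$ such that $\|y - y'\|_{\mcH^{1}} \leq \delta$ and $\|y\|_{\mcH^{1}}, \|y'\|_{\mcH^{1}} \leq R$ imply $|\phi(y) - \phi(y')| \leq \varepsilon$. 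On the event $\{\tau_{R} > t\}$ both arguments lie in that ball and $y(t \wedge \tau_{R};y_{0}) = y(t;y_{0})$ (and similarly for $y_{0}'$), so
\begin{align*}
\E\bigl[|\phi(y(t;y_{0})) - \phi(y(t;y_{0}'))|; \tau_{R} > t\bigr] \leq \varepsilon + 2\|\phi\|_{\infty} P\bigl( \|y(t\wedge\tau_{R};y_{0}) - y(t\wedge\tau_{R};y_{0}')\|_{\mcH^{1}} > \delta \bigr).
\end{align*}

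To control the last probability I would apply Chebyshev's inequality and Lemma \ref{STMHD_FP_thm_Lemma} to obtain
\begin{align*}
P\bigl( \|y(t\wedge\tau_{R};y_{0}) - y(t\wedge\tau_{R};y_{0}')\|_{\mcH^{1}} > \delta \bigr) \leq \frac{C_{t,R,N,f,H,\Sigma}}{\delta^{2}} \|y_{0} - y_{0}'\|_{\mcH^{1}}^{2},
\end{align*}
which can be made smaller than $\varepsilon/(2\|\phi\|_{\infty} + 1)$ by choosing $\|y_{0} - y_{0}'\|_{\mcH^{1}}$ sufficiently small. Combining the three bounds yields $|T_{t}\phi(y_{0}) - T_{t}\phi(y_{0}')| \leq 3\varepsilon$, uniformly in $y_{0}, y_{0}'$ in the $M$-ball, which is the desired local uniform continuity. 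The main obstacle is already absorbed into Lemma \ref{STMHD_FP_thm_Lemma}; the remaining difficulty is only the careful order of quantifiers ($M$ given, then $R$, then $\delta$, then $\|y_{0}-y_{0}'\|_{\mcH^{1}}$), and checking that the constants in \eqref{STMHD_ex_eq_aprioriNbound} and in Lemma \ref{STMHD_FP_thm_Lemma} depend only on $M, R, t$ (and the data), not on $y_{0}, y_{0}'$ individually.
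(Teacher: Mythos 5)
Your proposal is correct and follows essentially the same route as the paper, which simply defers to the argument of \cite[Theorem 4.2]{RZ09b}: that argument is exactly your stopping-time truncation, combining the a priori bound \eqref{STMHD_ex_eq_aprioriNbound} (to control $P(\tau_{R}\leq t)$), the local uniform continuity of $\phi$ on the ball of radius $R$, and Lemma \ref{STMHD_FP_thm_Lemma} with Chebyshev's inequality. Your order of quantifiers and the uniformity of the constants over the $M$-ball are handled correctly, so nothing is missing.
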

\begin{proof}
    Having formulated the problem in this abstract way, we can use Lemma \ref{STMHD_FP_thm_Lemma}  to run the exact same proof as in \cite[Theorem 4.2, p. 241 f.]{RZ09b} to prove the claim, as it does not use any specific properties of the Navier--Stokes equations.
\end{proof}

In the periodic case, we can show existence of an invariant measure for our equations:
\begin{theorem}[Invariant measures in the periodic case]\label{STMHD_thm_inv_ms_ex}
	Under the hypotheses \ref{STMHD_prel_itm_ass_H1}, \ref{STMHD_prel_itm_ass_H2}, \ref{STMHD_FP_itm_ass_H3}, in the periodic case $\mbD = \mbT^{3}$, there exists an invariant measure $\mu \in \mcP(\mcH^{1})$ associated to $(T_{t})_{t \geq 0}$ such that for every $t \geq 0$, $\phi \in BC_{\text{loc}}(\mcH^{1})$
	\begin{align*}
		\int_{\mcH^{1}} T_{t} \phi(y_{0}) \diff \mu(y_{0}) = \int_{\mcH^{1}} \phi(y_{0}) \diff \mu(y_{0}).
	\end{align*}
\end{theorem}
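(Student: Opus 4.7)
The plan is to apply the Krylov--Bogoliubov procedure. By Theorem \ref{STMHD_thm_Feller} the semigroup $(T_t)_{t \geq 0}$ is Feller on $BC_{\text{loc}}(\mcH^1)$, and since every element of $BC_{\text{loc}}(\mcH^1)$ lies in $C_b(\mcH^1)$, it suffices to exhibit one $y_0 \in \mcH^1$ (I take $y_0 = 0$) for which the Ces\`aro averages
\begin{align*}
\mu_{T}(\cdot) := \frac{1}{T} \int_{0}^{T} P\{ y(s;y_{0}) \in \cdot \} \diff s, \quad T \geq 1,
\end{align*}
are tight on $\mcH^1$. The invariance of any weak limit $\mu$ along a subsequence $T_n \to \infty$ then follows from the standard identity
\begin{align*}
\left| \int_{\mcH^1} T_t \phi \diff \mu_{T_n} - \int_{\mcH^1} \phi \diff \mu_{T_n} \right| \leq \frac{2 t \, \| \phi \|_\infty}{T_n} \xrightarrow[n \to \infty]{} 0, \quad \phi \in BC_{\text{loc}}(\mcH^1),
\end{align*}
combined with the weak convergence $\mu_{T_n} \to \mu$ and the Feller continuity of $T_t \phi$.

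The heart of the argument is therefore the uniform-in-time bound
\begin{align*}
\sup_{T \geq 1} \frac{1}{T} \int_0^T \E \bigl[ \| y(s;y_0) \|_{\mcH^2}^2 \bigr] \diff s < \infty.
\end{align*}
I would first apply It\^o's formula to $\| y(t) \|_{\mcH^0}^2$, using the clean identity \eqref{STMHD_prel_eq_A_testing1} together with the noise bound \eqref{STMHD_prel_eq_B_L2l2H0}. On $\mbT^{3}$, the Poincar\'e inequality for the zero-mean part of $y$, combined with the superlinear damping supplied by $g_N$ via \eqref{STMHD_prel_eq_A_testing2} (which controls the zero Fourier mode whenever it becomes large), and the smallness $\sup \|\Sigma\|_{\ell^2}^2 \leq 1/36$ from \ref{STMHD_prel_itm_ass_H2}, should yield
\begin{align*}
\sup_{t \geq 0} \E \| y(t;y_0) \|_{\mcH^0}^2 + \sup_{T \geq 1} \frac{1}{T} \int_0^T \E \| y(s;y_0) \|_{\mcH^1}^2 \diff s < \infty.
\end{align*}
Applying It\^o's formula next to $\| y(t) \|_{\mcH^1}^2$ and invoking \eqref{STMHD_prel_eq_A_H1_testing} and \eqref{STMHD_prel_eq_B_L2l2H1}, the positive terms $(2N+1)\| \nabla y \|_{\mcH^0}^2 + \| y \|_{\mcH^0}^2$ on the right-hand side of \eqref{STMHD_prel_eq_A_H1_testing} are absorbed into the time average obtained in the previous step, while the contribution $\tfrac{1}{2} \| y \|_{\mcH^2}^2$ coming from the noise in \eqref{STMHD_prel_eq_B_L2l2H1} is absorbed into the $-\tfrac{1}{2} \| y \|_{\mcH^2}^2$ dissipation of \eqref{STMHD_prel_eq_A_H1_testing}. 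Taking expectation, integrating from $0$ to $T$ and dividing by $T$ then produces the desired uniform time-average bound.

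Tightness of $(\mu_T)_{T \geq 1}$ on $\mcH^1$ follows from Chebyshev's inequality,
\begin{align*}
\mu_T \{ y \in \mcH^1 \colon \| y \|_{\mcH^2} > R \} \leq \frac{1}{R^2 T} \int_0^T \E \| y(s;y_0) \|_{\mcH^2}^2 \diff s \leq \frac{C}{R^2},
\end{align*}
together with the compactness of the Sobolev embedding $\mcH^2 \hookrightarrow \mcH^1$ on the torus (Rellich--Kondrachov). The main obstacle I foresee is exactly the uniform-in-time estimate: the a priori bounds of Lemma \ref{STMHD_thm_apriori} and Theorem \ref{STMHD_thm_exuniq} carry constants that grow in $T$ and are therefore useless as $T \to \infty$. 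The global dissipation argument must carefully exploit the interplay between (i) the $\mcH^{2}$-dissipation from $\Delta$, (ii) the superlinear $\mcL^{4}$-damping from the taming function $g_{N}$, (iii) the smallness of $\Sigma$, and (iv) the compactness of the torus via Poincar\'e; once these ingredients are correctly balanced, the Krylov--Bogoliubov conclusion is standard.
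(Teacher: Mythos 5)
Your proposal is correct and follows essentially the same route as the paper: a two-stage It\^{o} argument (first for $\| y \|_{\mcH^{0}}^{2}$ via \eqref{STMHD_prel_eq_A_testing2} and \eqref{STMHD_prel_eq_B_L2l2H0}, then for $\| y \|_{\mcH^{1}}^{2}$ via \eqref{STMHD_prel_eq_A_H1_testing} and \eqref{STMHD_prel_eq_B_L2l2H1}) yielding the uniform time-averaged $\mcH^{2}$ bound, followed by Chebyshev, the compact embedding $\mcH^{2} \subset \mcH^{1}$ on the torus, and Krylov--Bogoliubov. The only cosmetic difference is that you aim for a uniform-in-time $\mcH^{0}$ bound via Poincar\'e, whereas the paper is content with the linear-in-$t$ bound $\E \| y(t) \|_{\mcH^{0}}^{2} + \int_{0}^{t} \E \| y \|_{\mcH^{1}}^{2} \diff s \leq C ( \| y_{0} \|_{\mcH^{0}}^{2} + t )$, which already suffices after dividing by $t$.
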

\begin{proof}
    We start by using It\^{o}'s formula for $\| y(t) \|_{\mcH^{0}}^{2}$, Equation \eqref{STMHD_prel_eq_A_testing2}, Young's inequality, Equation \eqref{STMHD_prel_eq_B_L2l2H0} as well as Assumption \ref{STMHD_prel_itm_ass_H1} and -- owing to the boundedness of the domain $\mbT^{3}$ -- the embedding $\mcL^{4}(\mbT^{3}) \subset \mcL^{2}(\mbT^{3})$. These combined yield that
	\begin{equation}\label{STMHD_FP_eq_aprioriH0}
		\begin{split}
		\E \left[ \| y(t) \|_{\mcH^{0}}^{2} \right] + \int\limits_{0}^{t} \E \left[ \| y(s) \|_{\mcH^{1}}^{2} \right] \diff s + \int\limits_{0}^{t} \E \left[ \| y(s) \|_{\mcL^{4}}^{4} \right] \diff s \leq C_{f,H,N,\mbD} \left( \| y(0) \|_{\mcH^{0}}^{2} +  t \right).
		\end{split}
	\end{equation}
	Now using the It\^{o} formula  for the norm $\| y(t) \|_{\mcH^{1}}^{2}$ and the corresponding higher-order estimates \eqref{STMHD_prel_eq_A_H1_testing}, \eqref{STMHD_prel_eq_B_L2l2H1}, we find by proceeding in the same way as before -- additionally using \eqref{STMHD_FP_eq_aprioriH0} -- that
	\begin{align*}
		\E \left[ \| y(t) \|_{\mcH^{1}}^{2} \right] \leq - \frac{1}{2} \int\limits_{0}^{t} \E \left[ \| y(s) \|_{\mcH^{2}}^{2} \right] \diff s + C_{f,H,N,\mbD} \cdot t,
	\end{align*}
	which implies
	\begin{equation}\label{STMHD_eq_invmstightness}
		\frac{1}{t} \int\limits_{0}^{t} \E \left[ \| y(s) \|_{\mcH^{2}}^{2} \right] \diff s \leq C_{f,H,N,\mbD}.
	\end{equation}
	Note that this inequality is uniform in $t$ and in the initial condition $y_{0}$.
	
We now conclude as follows: following the notation of G. Da Prato and J. Zabczyk \cite{DPZ96}, we define the kernels
	\begin{align*}
		R_{T}(y_{0},\Gamma) := \frac{1}{T} \int\limits_{0}^{T} (T_{s}1_{\Gamma})(y_{0}) \diff t, \quad y_{0} \in \mcH^{1}, \Gamma \in \mcB(\mcH^{1}).
	\end{align*}
Set $E = \mcH^{1}$ and let $\nu \in \mcP(\mcH^{1})$ be an arbitrary probability measure. Define the associated measure
\begin{align*}
	R_{T}^{*} \nu (\Gamma) = \int_{E} R_{T}(y_{0},\Gamma) \nu(\diff y_{0}).
\end{align*}
We will show that these measures are tight. To this end, consider the sets $\Gamma_{r} := \left\{ y_{0} \in \mcH^{2} ~|~ \| y_{0} \|_{\mcH^{2}} \leq r \right\}$ $\subset \mcH^{2} \subset \mcH^{1}$. Since they are bounded subsets of $\mcH^{2}$ and the embedding $\mcH^{2} \subset \mcH^{1}$ is compact, they are compact subsets of $\mcH^{1}$. Now, using 	\eqref{STMHD_eq_invmstightness}, we show that the measures $R_{T}^{*}\nu$ are concentrated on $\Gamma_{r}$ for sufficiently large $r > 0$, hence tight: denoting the complement of a set $A \subset E$ by $A^{c}$ and using Chebychev's inequality, we find 
\begin{align*}
	R_{T}^{*} \nu(\Gamma_{r}^{c}) &= \int\limits_{E} R_{T}(y_{0},\Gamma_{r}^{c}) \nu(\diff y_{0}) = \int_{E} \frac{1}{T} \int\limits_{0}^{T} \E \left[ 1_{\Gamma_{r}^{c}}(y(t;y_{0})) \right] \diff t~ \nu(\diff y_{0}) \\
	&\leq \int_{E} \frac{1}{T} \int\limits_{0}^{T} \E \left[ \frac{\| y(t;y_{0}) \|_{\mcH^{2}}^{2}}{r^{2}} \right] \diff t~ \nu(\diff y_{0}) \leq \frac{1}{r^{2}}\int_{E} C_{f,H,N,\mbD} ~\nu(\diff y_{0}) = \frac{C_{f,H,N,\mbD}}{r^{2}}.
\end{align*}
Therefore, the Krylov-Bogoliubov theorem \cite[Corollary 3.1.2, p. 22]{DPZ96} ensures the existence of an invariant measure.
\end{proof}
\begin{remark}
	Note that Z. Brze\'{z}niak and G. Dhariwal  \cite{BD19} prove existence of an invariant measure for a similar system even in the case of the full space, so there might be hope to extend this theorem also in our case.
\end{remark}

\section*{Acknowledgements}
Financial support by the German Research Foundation (DFG) through the IRTG 2235 is gratefully acknowledged. The author would further like to thank Michael R\"ockner for helpful discussions.

 \bibliographystyle{plain}
\bibliography{refs}
 \end{document}